\definecolor{webgreen}{rgb}{0,.5,0}
\definecolor{webbrown}{rgb}{.6,0,0}
\begin{document}

\theoremstyle{plain}
\newtheorem{theorem}{Theorem}
\newtheorem{corollary}[theorem]{Corollary}
\newtheorem{proposition}{Proposition}
\newtheorem{lemma}{Lemma}
\newtheorem*{example}{Examples}
\newtheorem{remark}{Remark}

\numberwithin{equation}{section}

\begin{center}

\vskip 1cm{\LARGE\bf  
A Short Proof of Knuth's Old Sum 
}
\vskip 1cm
\large
Kunle Adegoke \\ 
Department of Physics and Engineering Physics\\
Obafemi Awolowo University\\
220005 Ile-Ife \\
Nigeria \\
\href{mailto:adegoke00@gmail.com }{\tt adegoke00@gmail.com}\\

\end{center}

\vskip .2 in

\begin{abstract}
We give a short proof of the well-known Knuth's old sum and provide some generalizations. Our approach utilizes the binomial theorem and integration formulas derived using the Beta function. Several new polynomial identities and combinatorial identities are derived.
\end{abstract}

\noindent 2020 {\it Mathematics Subject Classification}: Primary 05A10; Secondary 05A19. 

\noindent \emph{Keywords: } Knuth's old sum, Reed Dawson identity, Catalan number, Beta function, Combinatorial identity, polynomial identity.

\section{Introduction}
There appears to be a renewed interest~\cite{rathie22, tefera23, lim23, alzer24, adegoke24} in the famous Knuth's old sum (also known as the Reed Dawson identity),
\begin{equation}\label{eq.knuth}
\sum_{k = 0}^n {( - 1)^k \binom{{n}}{k}2^{ - k} \binom{{2k}}{k}}
 =  \begin{cases}
 2^{ - n} \binom{{n}}{n/2}, &\text{if $n$ is even};\\ 
 0,&\text{if $n$ is odd}. \\ 
 \end{cases} 
\end{equation}
Many different proofs of this identity and various generalizations exist in the literature (see~\cite{prodinger94} for a survey).

In this paper we give a very short proof of~\eqref{eq.knuth} and offer the following generalization:
\begin{equation}\label{eq.knuth-gen}
\begin{split}
&\sum_{k = 0}^n {( - 1)^k \binom{{n}}{k}2^{ - k - m} \binom{{2(k + m)}}{{k + m}}} \\ 
&\qquad= \begin{cases}
 \sum_{k = 0}^{\left\lfloor {m/2} \right\rfloor } {\binom{{m}}{{2k}}2^{ - n - 2k} \binom{{2k + n}}{{(2k + n)/2}}},&\text{if $n$ is even} ; \\ 
 -\sum_{k = 1}^{\left\lceil {m/2} \right\rceil } {\binom{{m}}{{2k - 1}}2^{ - n - 2k + 1} \binom{{2k + n - 1}}{{(2k + n - 1)/2}}},&\text{if $n$ is odd} ; \\ 
 \end{cases} 
\end{split}
\end{equation}
where $m$ and $n$ are non-negative integers and, as usual, $\lfloor z\rfloor$ is the greatest integer less than or equal to $z$ while $\lceil z\rceil$ is the smallest integer greater than or equal to $z$.

The following special cases of~\eqref{eq.knuth-gen} were also reported in Riordan~\cite[p.72, Problem 4(b)]{riordan71}:
\begin{gather}
\sum_{k = 0}^{\left\lfloor {n/2} \right\rfloor } {\binom n{2k}2^{n - 2k}\binom{2k}k}  =\binom{2n}n \label{eq.ilslov7},\\
\sum_{k = 1}^{\left\lceil {n/2} \right\rceil } {\binom{{n}}{{2k - 1}}2^{n - 2k} \binom{{2k}}{k}}  = \frac12 \binom{{2n + 2}}{{n + 1}} - \binom{{2n}}{n}=\frac n{n + 1}\binom{2n}n\label{eq.ef9et5k}.
\end{gather}
Identity~\eqref{eq.ilslov7} corresponds to setting $n=0$ in~\eqref{eq.knuth-gen} and re-labeling $m$ as $n$; while~\eqref{eq.ef9et5k} follows from setting $n=1$ in~\eqref{eq.knuth-gen}.

In section~\ref{sec.complement}, we will derive the following complements of Knuth's old sum:
\begin{equation*}
\sum_{k = 0}^n {( - 1)^k \binom{{2k}}{k}\binom{{2\left( {n - k} \right)}}{{n - k}}}
=  \begin{cases}
 2^n \binom{n}{n/2},&\text{if $n$ is even;} \\ 
 0,&\text{if $n$ is odd;} \\ 
 \end{cases} 
\end{equation*}
and 
\begin{equation}\label{eq.complement2}
\sum_{k=0}^n{\binom{2\left(n - k\right)}{n-k}\binom{2k}k}=2^{2n}.
\end{equation}
Identity~\eqref{eq.complement2} is the famous combinatorial identity concerning the convolution of central binomial coefficients. Many different proofs of this identity exist in the literature, (see Miki\'c~\cite{mikic} and the many references therein).

Identity~\eqref{eq.knuth-gen} is itself a particular case of a more general identity, stated in Theorem~\ref{thm.gcd6vin}, which has many interesting consequences, including another generalization of Knuth's old sum, namely,
\begin{equation*}
\sum_{k = 0}^n {( - 1)^k \binom{{n}}{k}2^{ - k} \binom{{2k + v}}{{\left( {2k + v} \right)/2}}\binom{{k + v}}{{v/2}}^{ - 1} }  
=  \begin{cases}
 2^{ - n} \binom{{n}}{{n/2}}\binom{{\left( {n + v} \right)/2}}{{v/2}}^{ - 1},&\text{if $n$ is even;}  \\ 
 0,&\text{if $n$ is odd;} \\ 
 \end{cases} 
\end{equation*}
where $v$ is a real number; as well as simple, apparently new combinatorial identities such as
\begin{equation*}
\sum_{k = 1}^{\left\lceil {n/2} \right\rceil } {\binom{n}{2k - 1}\,2^{n - 2k}\, C_k }  = \frac{1}{2}\,C_{n + 2}  - C_{n + 1};
\end{equation*}
where, here and throughout this paper, 
\begin{equation*}
C_j  = \frac 1{j + 1}\,\binom{2j}j,
\end{equation*}
defined for every non-negative integer $j$, is a Catalan number.

Based on the binomial theorem, we will derive, in Section~\ref{polynomials}, some presumably new polynomial identities, including the following:
\begin{equation}\label{eq.l5xib79}
\sum_{k = 0}^n {( - 1)^{n - k} \binom{{n}}{k}2^{ - k} \binom{{2k}}{k}\left( {1 - x} \right)^{n - k} }  = \sum_{k = 0}^{\left\lfloor {n/2} \right\rfloor } {\binom{{n}}{{2k}}2^{ - 2k} \binom{{2k}}{k}x^{n - 2k} }.
\end{equation}
Identity~\eqref{eq.l5xib79} subsumes Knuth's old sum~\eqref{eq.knuth} (at $x=0$), as well as~\eqref{eq.ilslov7} (at $x=1$).

Finally, in Section~\ref{combinatorial}, the polynomial identities will facilitate the derivation of apparently new combinatorial identities such as
\begin{gather*}
\sum_{k = 0}^{\left\lfloor {n/2} \right\rfloor } {\binom{{n}}{{2k}}\frac{1}{{2k + 1}}}  = \frac{{2^{n - 1} }}{{2^n  - 1}}\sum_{k = 1}^{\left\lceil {n/2} \right\rceil } {\binom{{n}}{{2k - 1}}\frac{1}{k}},\quad n\ne 0,\\
\sum_{k = 0}^{\left\lfloor {n/2} \right\rfloor } {\binom{{n}}{{2k}}2^{ - 2k} C_k }  = \frac{{2^{ - n + 1} }}{{n + 2}}\left( {2n + 1} \right)C_n ,
\end{gather*} 
and
\begin{equation*}
\sum_{k = 0}^n {( - 1)^{n - k} \binom{{n}}{k}\frac{{2\left( {2k + 1} \right)}}{{k + 2}}C_k }  = \sum_{k = 0}^{\left\lfloor {n/2} \right\rfloor } {\binom{{n}}{{2k}}C_k } .
\end{equation*}
\section{Required identities}
In order to give the short proof of Knuth's old sum, we need a couple of definite integrals which we establish in Lemma~\ref{lem.ibicayr}.

The binomial coefficients are defined, for non-negative integers $m$ and $n$, by
\begin{equation*}
\binom mn=
\begin{cases}
\dfrac{{m!}}{{n!(m - n)!}}, & \text{$m \ge n$};\\
0, & \text{$m<n$};
\end{cases}
\end{equation*}
the number of distinct sets of $n$ objects that can be chosen from $m$ distinct objects.

Generalized binomial coefficients are defined for complex numbers $u$ and $v$, excluding the set of negative integers, by
\begin{equation}\label{eq.o0ohqay}
\binom{{u}}{v} = \frac{{\Gamma \left( {u + 1} \right)}}{{\Gamma \left( {v + 1} \right)\Gamma \left( {u - v + 1} \right)}},
\end{equation}
where $\Gamma(z)$ is the Gamma function defined by
\begin{equation*}
\Gamma (z) = \int_0^\infty  {e^{ - t} t^{z - 1} dt}  = \int_0^\infty  {\left( {\log \left( {1/t} \right)} \right)^{z - 1} dt}
\end{equation*}
and extended to the rest of the complex plain, excluding the non-positive integers, by analytic continuation.

\begin{lemma}\label{lem.ibicayr}
Let $u$ and $v$ be complex numbers such that $\Re u>-1$ and $\Re v> -1$. Let $m$ be a non-negative integer. Then
\begin{equation}
\int_0^\pi  {\cos ^u (x/2)}\,dx =2^{-u}\,\pi\binom{u}{u/2}=\int_0^\pi  {\sin ^u (x/2)}\,dx\label{eq.cejya7g},
\end{equation}
\begin{equation}\label{eq.s4qk1l3}
\int_0^\pi  {\cos ^m x\,dx} 
 = \begin{cases}
2^{-m}\,\pi\binom{{m}}{{m/2}},&\text{if $m$ is even;} \\
0,&\text{if $m$ is odd;} \\ 
 \end{cases} 
\end{equation}
and, more generally,
\begin{equation}\label{int1}
I(u,v):=\int_0^\pi  {\cos ^u \left( {\frac{x}{2}} \right)\sin ^v \left( {\frac{x}{2}} \right)\,dx}  = 2^{-u -v}\,\pi\,\binom{{u}}{{u/2}}\binom{{v}}{{v/2}}\binom{{\left( {u + v} \right)/2}}{{u/2}}^{ - 1}, 
\end{equation}
and
\begin{equation}\label{int2}
J(m,v):=\int_0^\pi  {\cos ^m x\sin ^v x\,dx}  =  
\begin{cases}
 2^{-m -v}\,\pi\,\binom{{m}}{{m/2}}\binom{{v}}{{v/2}}\binom{{\left( {m + v} \right)/2}}{{m/2}}^{ - 1}, &\text{if $m$ is even;}\\ 
 0, &\text{if $m$ is odd.}  \\ 
\end{cases} 
\end{equation}
Obviously $I(v,u)=I(u,v)$, a symmetry property that is not possessed by $J(m,v)$.

\end{lemma}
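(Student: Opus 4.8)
The plan is to establish the most general formula~\eqref{int1} first, since the remaining three identities will follow from it by specialization or by an elementary symmetry argument. The central tools are the trigonometric form of the Beta function together with the Legendre duplication formula for the Gamma function.

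First I would substitute $\theta=x/2$ in the integral defining $I(u,v)$. Because $x$ ranges over $[0,\pi]$, the new variable ranges over $[0,\pi/2]$, and crucially both $\cos\theta$ and $\sin\theta$ are non-negative there, so $\cos^u\theta$ and $\sin^v\theta$ are unambiguously defined for complex $u,v$ with $\Re u,\Re v>-1$. This gives
\[
I(u,v)=2\int_0^{\pi/2}\cos^u\theta\,\sin^v\theta\,d\theta=B\!\left(\frac{u+1}{2},\frac{v+1}{2}\right)=\frac{\Gamma\!\left(\frac{u+1}{2}\right)\Gamma\!\left(\frac{v+1}{2}\right)}{\Gamma\!\left(\frac{u+v}{2}+1\right)},
\]
the standard half-angle evaluation of the Beta integral.

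The main work is then purely algebraic: to show that this ratio of Gamma functions equals the right-hand side of~\eqref{int1}. Using the definition~\eqref{eq.o0ohqay} I would write each of $\binom{u}{u/2}$, $\binom{v}{v/2}$ and $\binom{(u+v)/2}{u/2}^{-1}$ explicitly in terms of Gamma functions, so that the target right-hand side collapses to
\[
2^{-u-v}\,\pi\,\frac{\Gamma(u+1)\,\Gamma(v+1)}{\Gamma\!\left(\frac{u}{2}+1\right)\Gamma\!\left(\frac{v}{2}+1\right)\Gamma\!\left(\frac{u+v}{2}+1\right)}.
\]
The key step, and the one I expect to be the main obstacle, is reconciling this expression with the Beta value. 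I would apply the Legendre duplication formula in the form $\Gamma(w+1)=\pi^{-1/2}\,2^{w}\,\Gamma\!\left(\frac{w+1}{2}\right)\Gamma\!\left(\frac{w}{2}+1\right)$ to both $\Gamma(u+1)$ and $\Gamma(v+1)$. This cancels the factors $\Gamma(u/2+1)$ and $\Gamma(v/2+1)$, turns $2^{-u-v}$ into $2^{0}$, and combines the two $\pi^{-1/2}$ factors with the leading $\pi$, leaving exactly $\Gamma((u+1)/2)\Gamma((v+1)/2)/\Gamma((u+v)/2+1)$, as required.

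With~\eqref{int1} in hand the rest is routine. Formula~\eqref{eq.cejya7g} is the special case $v=0$ (respectively $u=0$), using $\binom{0}{0}=1$ and $\binom{u/2}{u/2}^{-1}=1$; the equality of the cosine and sine integrals there also follows directly from the substitution $x\mapsto\pi-x$, which interchanges $\cos(x/2)$ and $\sin(x/2)$. For~\eqref{int2} I would split $\int_0^\pi=\int_0^{\pi/2}+\int_{\pi/2}^{\pi}$ and apply $x\mapsto\pi-x$ to the second piece; since $\cos(\pi-x)=-\cos x$ and $\sin(\pi-x)=\sin x$, this yields $J(m,v)=\bigl(1+(-1)^m\bigr)\int_0^{\pi/2}\cos^m x\,\sin^v x\,dx$, which vanishes for odd $m$ and equals $I(m,v)$ for even $m$. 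Note that restricting $m$ to a non-negative integer is precisely what keeps $\cos^m x$ well defined on $[\pi/2,\pi]$, where the cosine is negative. Finally~\eqref{eq.s4qk1l3} is the case $v=0$ of~\eqref{int2}.
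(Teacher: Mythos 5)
Your proof is correct, and its skeleton matches the paper's: both reduce everything to the quarter-period Beta integral $\int_0^{\pi/2}\cos^u x\sin^v x\,dx$ (the paper's $K(u,v)$ in~\eqref{beta}), obtain~\eqref{int1} by the half-angle substitution, and obtain~\eqref{int2} by splitting at $\pi/2$ and exploiting parity, with~\eqref{eq.cejya7g} and~\eqref{eq.s4qk1l3} as the $v=0$ cases. There are two genuine differences worth noting. First, the paper simply cites the binomial-coefficient form of the Beta integral from Gradshteyn--Ryzhik (Entry 3.621.5), whereas you derive it, converting the right-hand side of~\eqref{int1} to Gamma functions and collapsing it with the Legendre duplication formula $\Gamma(w+1)=\pi^{-1/2}2^{w}\Gamma\bigl(\tfrac{w+1}{2}\bigr)\Gamma\bigl(\tfrac{w}{2}+1\bigr)$; this makes your argument self-contained at the cost of a page of Gamma-function algebra the paper avoids. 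Second, for~\eqref{int2} you reflect the second piece by $x\mapsto\pi-x$, which sends $\cos^m x\mapsto(-1)^m\cos^m x$ and fixes $\sin^v x$, so the factor $\bigl(1+(-1)^m\bigr)$ appears immediately; the paper instead shifts by $x=y+\pi/2$, which swaps sine and cosine and therefore needs the symmetry $K(m,v)=K(v,m)$ to finish. Your reflection is marginally cleaner since it never invokes that symmetry, and your observation that integrality of $m$ is exactly what keeps $\cos^m x$ well defined on $[\pi/2,\pi]$ is a point the paper leaves implicit.
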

\begin{proof}
Identities~\eqref{int1} and~\eqref{int2} are immediate consequences of the well-known Beta function integral~\cite[Entry 3.621.5, Page 397]{gradsh07}:
\begin{equation}\label{beta}
K(u,v):=\int_0^{\pi/2}  {\cos ^u x\sin ^v x\,dx}  = 2^{ - u - v - 1}\, \pi \binom{{u}}{{u/2}}\binom{{v}}{{v/2}}\binom{{\left( {u + v} \right)/2}}{{u/2}}^{ - 1} , 
\end{equation}
valid for $\Re u>-1$, $\Re v> -1$, with the symmetry property $K(u,v)=K(v,u)$.

Identity~\eqref{int1} is obtained via a simple change of the integration variable from $x$ to $y$ in~\eqref{beta}, with $x=y/2$. 

To prove~\eqref{int2}, write
\begin{equation*}
J (m,v)= \int_0^\pi  {\cos ^m x\sin ^v x\,dx}  = \int_0^{\pi /2} {\cos ^m x\sin ^v x\,dx}  + \int_{\pi /2}^\pi  {\cos ^m x\sin ^v x\,dx}.
\end{equation*}
Change the integration variable in the second integral on the right hand side from $x$ to $y$ via $x=y+\pi/2$; this gives
\begin{equation*}
\begin{split}
J (m,v)&= \int_0^{\pi /2} {\cos ^m x\sin ^v x\,dx}  + ( - 1)^m \int_0^{\pi /2} {\sin ^m y\cos ^v y\,dy}\\ 
 &= K(m,v) + ( - 1)^m K(v,m)\\
 &= \left( {1 + ( - 1)^m } \right)K(m,v);
\end{split}
\end{equation*}
and hence~\eqref{int2}.

\end{proof}

\begin{remark}
Since, for a real number $u$,
\begin{equation*}
1 + (-1)^u=2\cos^2\left(\frac{\pi u}2\right) + i\sin\left(\pi u\right),
\end{equation*}
the $J(m,v)$ stated in~\eqref{int2} is a special case of the following more general result:
\begin{equation}
\begin{split}
J\left( {u,v} \right) &= \int_0^\pi  {\cos ^u x\sin ^v x\,dx}\\
&  = \frac{\pi }{{2^{u + v + 1} }}\binom{{u}}{{u/2}}\binom{{v}}{{v/2}}\binom{{\left( {u + v} \right)/2}}{{u/2}}^{ - 1} \left( {2\cos ^2 \left( {\frac{{\pi u}}{2}} \right) + i\sin \left( {\pi u} \right)} \right),
\end{split}
\end{equation}
which is valid for $u>-1$ and $\Re v>-1$.
\end{remark}

\section{A short proof of Knuth's old sum}
\begin{theorem}
If $n$ is a non-negative integer, then
\begin{equation*}
\sum_{k = 0}^n {( - 1)^k \binom{{n}}{k}2^{ - k} \binom{{2k}}{k}}
 =  \begin{cases}
 2^{ - n} \binom{{n}}{n/2}, &\text{if $n$ is even};\\ 
 0,&\text{if $n$ is odd}. \\ 
 \end{cases} 
\end{equation*}
\end{theorem}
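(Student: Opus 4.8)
The plan is to convert the central binomial coefficient $\binom{2k}{k}$ into a trigonometric integral, so that the alternating binomial sum collapses under the binomial theorem. The starting observation is that putting $u=2k$ in~\eqref{eq.cejya7g} yields the integral representation $2^{-2k}\binom{2k}{k}=\frac{1}{\pi}\int_0^\pi \cos^{2k}(x/2)\,dx$. Multiplying through by $2^k$ and invoking the half-angle identity $2\cos^2(x/2)=1+\cos x$, I obtain
\[
2^{-k}\binom{2k}{k}=\frac{1}{\pi}\int_0^\pi \bigl(2\cos^2(x/2)\bigr)^k\,dx=\frac{1}{\pi}\int_0^\pi (1+\cos x)^k\,dx.
\]
This is the one creative step of the argument: it replaces the $k$-dependence of the summand by a pure $k$-th power of $(1+\cos x)$.

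First I would substitute this representation into the left-hand side of the claimed identity and interchange the finite sum with the integral (legitimate, as the sum has finitely many terms). The summand then carries the factor $(-1)^k\binom{n}{k}(1+\cos x)^k$, so the binomial theorem applies verbatim:
\[
\sum_{k=0}^n \binom{n}{k}\bigl(-(1+\cos x)\bigr)^k=\bigl(1-(1+\cos x)\bigr)^n=(-1)^n\cos^n x.
\]
Hence the entire sum reduces to $\dfrac{(-1)^n}{\pi}\int_0^\pi \cos^n x\,dx$.

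The final step is to evaluate $\int_0^\pi \cos^n x\,dx$ by~\eqref{eq.s4qk1l3}, which vanishes when $n$ is odd and equals $2^{-n}\pi\binom{n}{n/2}$ when $n$ is even. Since $(-1)^n=1$ for even $n$, the prefactor and the factor $\pi$ cancel cleanly, giving $2^{-n}\binom{n}{n/2}$ in the even case and $0$ in the odd case, exactly as asserted. I do not anticipate any real obstacle: the whole content sits in the change of representation, and the only point demanding care is the algebraic bookkeeping in passing from $2^{-2k}\binom{2k}{k}$ to $2^{-k}\binom{2k}{k}$ through the factor $2^k$ and the recognition that $2\cos^2(x/2)=1+\cos x$ — precisely the manipulation that lets the binomial theorem collapse the sum. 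Everything afterward is a direct appeal to Lemma~\ref{lem.ibicayr}.
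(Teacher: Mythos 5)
Your proof is correct and is essentially the paper's own argument run in the opposite order: the paper substitutes $y=-\cos x-1$ into the binomial theorem to get $\sum_k(-1)^k\binom{n}{k}2^k\cos^{2k}(x/2)=(-1)^n\cos^n x$ and then integrates termwise over $[0,\pi]$ via~\eqref{eq.cejya7g} and~\eqref{eq.s4qk1l3}, whereas you first convert $2^{-k}\binom{2k}{k}$ into the integral $\frac{1}{\pi}\int_0^\pi(1+\cos x)^k\,dx$ and then collapse the sum under the integral sign with the same binomial-theorem step. The key identities, the half-angle substitution $1+\cos x=2\cos^2(x/2)$, and the final evaluation are identical, so this is the same proof up to presentation.
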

\begin{proof}
Substitute $-\cos x - 1$ for $y$ in the binomial theorem
\begin{equation*}
\sum_{k=0}^n{\binom nk y^k}=(1 + y)^n,
\end{equation*}
to obtain
\begin{equation}\label{eq.ff6aeas}
\sum_{k = 0}^n {( - 1)^k \binom nk2^k \cos ^{2k} (x/2)}  = ( - 1)^n \cos ^n x.
\end{equation}
Thus
\begin{equation*}
\sum_{k = 0}^n {( - 1)^k \binom nk2^k \int_0^\pi  {\cos ^{2k} (x/2)\,dx} }  = ( - 1)^n \int_0^\pi  {\cos ^n x\,dx},
\end{equation*}
and hence~\eqref{eq.knuth} on account of~\eqref{eq.cejya7g} and~\eqref{eq.s4qk1l3}.
\end{proof}

\section{A generalization of Knuth's old sum}
In this section we extend~\eqref{eq.knuth} by introducing an arbitrary non-negative integer $m$ and a real number $v$.
\begin{theorem}\label{thm.gcd6vin}
If $m$ and $n$ are non-negative integers and $v$ is a real number, then
\begin{equation}\label{eq.r9e10tq}
\begin{split}
&\sum_{k = 0}^n {( - 1)^k \binom{{n}}{k}2^{ - k - m} \binom{{2k + 2m + v}}{{(2k + 2m + v)/2}}}\binom{k + m + v}{v/2}^{-1} \\ 
&\qquad= \begin{cases}
 \sum_{k = 0}^{\left\lfloor {m/2} \right\rfloor } {\binom{{m}}{{2k}}2^{ - n - 2k} \binom{{2k + n}}{{(2k + n)/2}}}\binom{{(2k + n + v)/2}}{{(2k + n)/2}}^{-1},&\text{if $n$ is even} ; \\ 
 -\sum_{k = 1}^{\left\lceil {m/2} \right\rceil } {\binom{{m}}{{2k - 1}}2^{ - n - 2k + 1} \binom{{2k + n - 1}}{{(2k + n - 1)/2}}}\binom{{(2k + n - 1 + v)/2}}{{(2k + n - 1)/2}}^{-1},&\text{if $n$ is odd} . \\ 
 \end{cases} 
\end{split}
\end{equation}
\end{theorem}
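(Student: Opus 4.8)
The plan is to mimic the short proof of the previous theorem but to integrate against a heavier weight. Starting from the polynomial identity~\eqref{eq.ff6aeas}, namely $\sum_{k=0}^n(-1)^k\binom{n}{k}2^k\cos^{2k}(x/2)=(-1)^n\cos^n x$, I would multiply both sides by $\cos^{2m+v}(x/2)\sin^v(x/2)$ and integrate over $[0,\pi]$. On the left each summand produces $(-1)^k\binom{n}{k}2^k\,I(2k+2m+v,v)$, which I would evaluate with the Beta-integral formula~\eqref{int1}. The decisive simplification is that, for $u=2k+2m+v$ and the second parameter also equal to $v$, one has $(u+v)/2=k+m+v$ and $u/2=k+m+v/2$, so the factor $\binom{(u+v)/2}{u/2}^{-1}$ becomes $\binom{k+m+v}{k+m+v/2}^{-1}=\binom{k+m+v}{v/2}^{-1}$ after invoking the symmetry of the binomial coefficient; collecting the powers of $2$ then turns the left-hand side into $\pi\binom{v}{v/2}2^{-m-2v}$ times exactly the sum on the left of~\eqref{eq.r9e10tq}.

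For the right-hand side I would evaluate $(-1)^n\int_0^\pi\cos^n x\,\cos^{2m+v}(x/2)\sin^v(x/2)\,dx$ by first writing $\cos^v(x/2)\sin^v(x/2)=2^{-v}\sin^v x$ and $\cos^{2m}(x/2)=2^{-m}(1+\cos x)^m$, so that the binomial theorem gives $2^{-m-v}\sum_{j=0}^m\binom{m}{j}J(n+j,v)$. Here the vanishing clause of~\eqref{int2}, that $J(n+j,v)=0$ whenever $n+j$ is odd, does the combinatorial work: it kills every term of the wrong parity, so that for even $n$ only the even indices $j=2k$ survive and for odd $n$ only the odd indices $j=2k-1$ survive, which is precisely what fixes the summation ranges $0\le k\le\lfloor m/2\rfloor$ and $1\le k\le\lceil m/2\rceil$. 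Substituting the explicit value of $J$ from~\eqref{int2} and cancelling the common factor $\binom{v}{v/2}$ against the one produced on the left then yields the two cases of~\eqref{eq.r9e10tq}, with the sign $(-1)^n$ accounting for the leading minus in the odd case.

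I expect the only real obstacle to be the bookkeeping in the left-hand evaluation: one must check that the three generalized binomial coefficients coming out of~\eqref{int1}, together with the prefactor $2^{-(2k+2m+v)-v}$, collapse to the claimed product $2^{-k-m}\binom{2k+2m+v}{(2k+2m+v)/2}\binom{k+m+v}{v/2}^{-1}$ up to a $k$-independent constant, and that this same constant $\pi\binom{v}{v/2}$, with no stray power of $2$, is the one that reappears from $J$ on the right so that the two cancel cleanly. This is a routine $\Gamma$-function computation, but it is where a sign or a factor of $2$ is easiest to lose; once it is settled, the parity split and the matching of the two right-hand cases are immediate.
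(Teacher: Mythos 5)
Your proposal is correct and follows essentially the same route as the paper's own proof: both multiply the identity~\eqref{eq.ff6aeas} by the weight $\cos^{2m+v}(x/2)\sin^v(x/2)$ (the paper merely carries the cosmetic factor $2^{m+v}$ explicitly), expand $(1+\cos x)^m$ by the binomial theorem on the right, and integrate over $[0,\pi]$ using Lemma~\ref{lem.ibicayr}, with the vanishing of $J$ at odd first argument producing the parity split. Your bookkeeping of the constants checks out: both sides carry the common factor $\pi\binom{v}{v/2}2^{-m-2v}$, which cancels to give~\eqref{eq.r9e10tq}.
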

In particular,
\begin{equation*}
\begin{split}
&\sum_{k = 0}^n {( - 1)^k \binom{{n}}{k}2^{ - k - m} \binom{{2(k + m)}}{{k + m}}} \\ 
&\qquad= \begin{cases}
 \sum_{k = 0}^{\left\lfloor {m/2} \right\rfloor } {\binom{{m}}{{2k}}2^{ - n - 2k} \binom{{2k + n}}{{(2k + n)/2}}},&\text{if $n$ is even} ; \\ 
 -\sum_{k = 1}^{\left\lceil {m/2} \right\rceil } {\binom{{m}}{{2k - 1}}2^{ - n - 2k + 1} \binom{{2k + n - 1}}{{(2k + n - 1)/2}}},&\text{if $n$ is odd} . \\ 
 \end{cases} 
\end{split}
\end{equation*}
\begin{proof}
Since
\begin{equation*}
\left( {1 + \cos x} \right)^m  = 2^m \cos ^{2m} \left( {\frac{x}{2}} \right) = \sum_{k = 0}^m {\binom{m}{k}\cos ^k x}
\end{equation*}
and
\begin{equation*}
\sin ^v x = 2^v \sin ^v \left( {\frac{x}{2}} \right)\cos ^v \left( {\frac{x}{2}} \right),
\end{equation*}
multiplication of the left hand side of~\eqref{eq.ff6aeas} by
\begin{equation*}
2^{m + v} \cos ^{2m + v} \left( {\frac{x}{2}} \right)\sin ^v \left( {\frac{x}{2}} \right)
\end{equation*}
and the right hand side by
\begin{equation*}
\sin ^v x\sum_{k = 0}^m {\binom{m}{k}\cos ^k x}
\end{equation*}
gives
\begin{equation*}
\begin{split}
&\sum_{k = 0}^n {( - 1)^k \binom{{n}}{k}2^{k + m + v} \cos ^{2k + 2m + v} (x/2)\sin ^v (x/2)} \\
&\qquad = ( - 1)^n \sum_{k = 0}^m {\binom{{m}}{k}\cos ^{k + n} x\sin^v x} ;
\end{split}
\end{equation*}
so that
\begin{equation*}
\begin{split}
&\sum_{k = 0}^n {( - 1)^k \binom{{n}}{k}2^{k + m + v} \cos ^{2k + 2m + v} (x/2)\sin ^v (x/2)} \\
&\qquad = ( - 1)^n \sum_{k = 0}^{\left\lfloor {m/2} \right\rfloor } {\binom{{m}}{{2k}}\cos ^{2k + n} x\sin ^v x}  + ( - 1)^n \sum_{k = 1}^{\left\lceil {m/2} \right\rceil } {\binom{{m}}{{2k - 1}}\cos ^{2k - 1 + n} x\sin ^v x} ,
\end{split}
\end{equation*}
from which~\eqref{eq.r9e10tq} now follows by termwise integration from $0$ to $\pi$, according to the parity of~$n$, using~Lemma~\ref{lem.ibicayr}.

\end{proof}

\begin{corollary}
If $n$ is a non-negative integer and $v$ is a real number, then
\begin{equation}\label{eq.hdj69wz}
\sum_{k = 0}^n {( - 1)^k \binom{{n}}{k}2^{ - k} \binom{{2k + v}}{{\left( {2k + v} \right)/2}}\binom{{k + v}}{{v/2}}^{ - 1} }  
=  \begin{cases}
 2^{ - n} \binom{{n}}{{n/2}}\binom{{\left( {n + v} \right)/2}}{{v/2}}^{ - 1},&\text{if $n$ is even;}  \\ 
 0,&\text{if $n$ is odd;} \\ 
 \end{cases} 
\end{equation}

\end{corollary}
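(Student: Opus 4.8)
The plan is to obtain this corollary as the special case $m=0$ of Theorem~\ref{thm.gcd6vin}. Setting $m=0$ in~\eqref{eq.r9e10tq}, the left-hand side becomes exactly the left-hand side of~\eqref{eq.hdj69wz}: the factor $2^{-k-m}$ reduces to $2^{-k}$, and the arguments $2k+2m+v$ and $k+m+v$ collapse to $2k+v$ and $k+v$ respectively. So no work is needed on the left.

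Next I would inspect the right-hand side according to the parity of $n$. When $n$ is even, the outer summation index runs from $k=0$ to $\lfloor 0/2\rfloor=0$, so only the $k=0$ term survives; it equals $\binom{0}{0}\,2^{-n}\binom{n}{n/2}\binom{(n+v)/2}{n/2}^{-1}$. When $n$ is odd, the summation runs from $k=1$ to $\lceil 0/2\rceil=0$ and is therefore empty, giving $0$, exactly as claimed in the odd case of~\eqref{eq.hdj69wz}.

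The only genuine step is to reconcile the inverse binomial coefficient $\binom{(n+v)/2}{n/2}^{-1}$ produced above with the factor $\binom{(n+v)/2}{v/2}^{-1}$ appearing in~\eqref{eq.hdj69wz}. This follows from the symmetry $\binom{u}{w}=\binom{u}{u-w}$, which holds for the generalized binomial coefficients of~\eqref{eq.o0ohqay} directly from the Gamma-function expression, since $\Gamma(w+1)\Gamma(u-w+1)$ is symmetric under $w\mapsto u-w$. Taking $u=(n+v)/2$ and $w=n/2$ gives $u-w=v/2$, so $\binom{(n+v)/2}{n/2}=\binom{(n+v)/2}{v/2}$, and the two forms agree. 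There is no real obstacle here: the result is a direct specialization, and the lone subtlety is recognizing the binomial symmetry that rewrites the index of the surviving term, which I would state explicitly to make the identification transparent.
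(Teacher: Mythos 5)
Your proof is correct and matches the paper's intended argument: the corollary is precisely the $m=0$ specialization of Theorem~\ref{thm.gcd6vin}, with the even case collapsing to the single $k=0$ term and the odd case giving an empty sum. Your explicit use of the symmetry $\binom{(n+v)/2}{n/2}=\binom{(n+v)/2}{v/2}$ (immediate from~\eqref{eq.o0ohqay}) is a detail the paper leaves unstated, and it is worth making explicit exactly as you did.
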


\begin{corollary}
If $n$ is a non-negative integer and $v$ is a real number, then
\begin{equation}\label{eq.y6pnymc}
\sum_{k = 0}^{\left\lfloor {n/2} \right\rfloor } {\binom{{n}}{{2k}}2^{ - 2k} \binom{{2k}}{k}\binom{{\left( {2k + v} \right)/2}}{k}^{ - 1} }  = 2^{ - n} \binom{{2n + v}}{{\left( {2n + v} \right)/2}}\binom{{n + v}}{{v/2}}^{ - 1},
\end{equation}
and
\begin{equation}\label{eq.wf0mlz9}
\begin{split}
&\sum_{k = 1}^{\left\lceil {n/2} \right\rceil } {\binom{{n}}{{2k - 1}}2^{n - 2k} \binom{{2k}}{k}\binom{{\left( {2k + v} \right)/2}}{k}^{ - 1} }\\
&\qquad  =\frac12\, \binom{{2n + v + 2}}{{\left( {2n + v + 2} \right)/2}}\binom{{n + v + 1}}{{v/2}}^{ - 1}  - \binom{{2n + v}}{{\left( {2n + v} \right)/2}}\binom{{n + v}}{{v/2}}^{-1}.
\end{split}
\end{equation}
\end{corollary}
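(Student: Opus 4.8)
The plan is to obtain both identities as the two simplest special cases of Theorem~\ref{thm.gcd6vin}, in exactly the way the introduction derives~\eqref{eq.ilslov7} and~\eqref{eq.ef9et5k} from~\eqref{eq.knuth-gen}. Concretely, I would set the outer summation variable $n$ in~\eqref{eq.r9e10tq} equal to $0$ to prove~\eqref{eq.y6pnymc} and equal to $1$ to prove~\eqref{eq.wf0mlz9}, and in each case relabel the remaining free parameter $m$ as $n$ at the end.

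To establish~\eqref{eq.y6pnymc}, I would put $n=0$ in~\eqref{eq.r9e10tq}. Because $\binom{0}{k}=0$ for $k\ge 1$, only the $k=0$ term survives on the left, collapsing it to $2^{-m}\binom{2m+v}{(2m+v)/2}\binom{m+v}{v/2}^{-1}$. Since $n=0$ is even, the right side is the first branch, which reduces to $\sum_{k=0}^{\lfloor m/2\rfloor}\binom{m}{2k}2^{-2k}\binom{2k}{k}\binom{(2k+v)/2}{k}^{-1}$ upon using $\binom{2k}{(2k)/2}=\binom{2k}{k}$. Relabeling $m$ as $n$ then yields~\eqref{eq.y6pnymc} verbatim.

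For~\eqref{eq.wf0mlz9}, I would instead set $n=1$ in~\eqref{eq.r9e10tq}. The left side now retains the $k=0$ and $k=1$ terms, giving $2^{-m}\binom{2m+v}{(2m+v)/2}\binom{m+v}{v/2}^{-1}-2^{-m-1}\binom{2m+2+v}{(2m+2+v)/2}\binom{m+1+v}{v/2}^{-1}$. As $n=1$ is odd, the right side is the second branch, a negative sum in which the power of two simplifies through $2^{-n-2k+1}=2^{-2k}$ and the central factors reduce via $\binom{2k+n-1}{(2k+n-1)/2}=\binom{2k}{k}$ and $\binom{(2k+n-1+v)/2}{(2k+n-1)/2}=\binom{(2k+v)/2}{k}$. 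Equating the two sides, multiplying through by $-2^n$, relabeling $m$ as $n$, and isolating the finite sum then delivers~\eqref{eq.wf0mlz9}.

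Because the argument is nothing more than a substitution into an already-established theorem, there is no conceptual obstacle. The only care needed is the routine bookkeeping of the powers of $2$ and the correct reduction of the half-integer central binomial coefficients at $n=0$ and $n=1$; I expect the single place where a sign slip is most likely is in checking that the factor $\tfrac12$ and the leading minus sign in~\eqref{eq.wf0mlz9} emerge correctly after multiplying by $-2^n$.
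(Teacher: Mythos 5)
Your proposal is correct and is exactly the paper's own proof: the paper likewise obtains~\eqref{eq.y6pnymc} by setting $n=0$ in~\eqref{eq.r9e10tq} and re-labeling $m$ as $n$, and~\eqref{eq.wf0mlz9} by evaluating~\eqref{eq.r9e10tq} at $n=1$ with the same re-labeling. Your detailed bookkeeping of the powers of $2$, the reduction of the half-integer binomial coefficients, and the sign handling after multiplying by $-2^n$ all check out.
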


\begin{proof}
Identity~\eqref{eq.y6pnymc} is obtained by setting $n=0$ in~\eqref{eq.r9e10tq} and re-labeling $m$ as $n$ while~\eqref{eq.wf0mlz9} is the evaluation of~\eqref{eq.r9e10tq} at $n=1$ with a re-labeling of $m$ as $n$.
\end{proof}

\begin{proposition}
If $n$ is a non-negative integer, then
\begin{gather}
\sum_{k = 1}^{\left\lceil {n/2} \right\rceil } {\binom{{n}}{{2k - 1}}\frac{1}{{2k + 1}}}  = \frac{{2^{n + 1} }}{{n + 2}} - \frac{{2^n }}{{n + 1}},\label{eq.amk3put}\\
\sum_{k = 1}^{\left\lceil {n/2} \right\rceil } {\binom{n}{2k - 1}\,2^{n - 2k}\, C_k }  = \frac{1}{2}\,C_{n + 2}  - C_{n + 1}\label{eq.ct31is7} .
\end{gather}

\end{proposition}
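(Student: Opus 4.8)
The plan is to obtain both identities as specializations of~\eqref{eq.wf0mlz9}, choosing the parameter $v$ so that the factor $\binom{2k}{k}\binom{(2k+v)/2}{k}^{-1}$ in the summand collapses to the desired quantity: I would take $v=2$ to produce~\eqref{eq.ct31is7} and $v=1$ to produce~\eqref{eq.amk3put}.

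For~\eqref{eq.ct31is7}, setting $v=2$ turns the inner binomial into $\binom{(2k+2)/2}{k}=\binom{k+1}{k}=k+1$, so that $\binom{2k}{k}\binom{k+1}{k}^{-1}=\frac{1}{k+1}\binom{2k}{k}=C_k$ and the left-hand side of~\eqref{eq.wf0mlz9} becomes precisely $\sum_{k=1}^{\lceil n/2\rceil}\binom{n}{2k-1}2^{n-2k}C_k$. On the right-hand side, $v=2$ produces the central binomials $\binom{2n+4}{n+2}$ and $\binom{2n+2}{n+1}$ together with the factors $\binom{n+3}{1}^{-1}=\frac1{n+3}$ and $\binom{n+2}{1}^{-1}=\frac1{n+2}$; recognizing $C_{n+2}=\frac1{n+3}\binom{2n+4}{n+2}$ and $C_{n+1}=\frac1{n+2}\binom{2n+2}{n+1}$ then yields $\frac12 C_{n+2}-C_{n+1}$ directly. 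This case is routine.

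For~\eqref{eq.amk3put}, I would set $v=1$ and reduce the resulting half-integer binomial coefficients using the explicit value $\Gamma(k+\tfrac12)=\frac{(2k)!}{4^k\,k!}\sqrt\pi$ (equivalently, Legendre's duplication formula). A short computation gives $\binom{2k}{k}\binom{k+1/2}{k}^{-1}=\frac{4^k}{2k+1}$, so that the factor $2^{n-2k}$ in the summand combines with $4^k$ to produce the constant $2^n$, and the left-hand side of~\eqref{eq.wf0mlz9} becomes $2^n\sum_{k=1}^{\lceil n/2\rceil}\binom{n}{2k-1}\frac{1}{2k+1}$. The right-hand side must then be evaluated at $v=1$, where one meets the central coefficients $\binom{2n+3}{(2n+3)/2}$ and $\binom{2n+1}{(2n+1)/2}$ and the half-integer factors $\binom{n+2}{1/2}^{-1}$ and $\binom{n+1}{1/2}^{-1}$.

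The main obstacle is this last evaluation. Applying the half-integer Gamma formula, I expect $\binom{2n+3}{(2n+3)/2}\binom{n+2}{1/2}^{-1}$ to simplify to $\frac{2^{2n+2}}{n+2}$ and $\binom{2n+1}{(2n+1)/2}\binom{n+1}{1/2}^{-1}$ to $\frac{2^{2n}}{n+1}$, after cancelling the $\sqrt\pi$ factors and telescoping the factorials; the right-hand side then equals $\frac{2^{2n+1}}{n+2}-\frac{2^{2n}}{n+1}=2^n\left(\frac{2^{n+1}}{n+2}-\frac{2^n}{n+1}\right)$, and dividing through by $2^n$ gives~\eqref{eq.amk3put}. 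As an independent check (or as a fully self-contained proof of~\eqref{eq.amk3put} alone), one can instead write $\frac1{2k+1}=\int_0^1 x^{2k}\,dx$, interchange sum and integral, sum the odd-index binomial series as $\frac{(1+x)^n-(1-x)^n}{2}$, and evaluate the two elementary integrals $\int_0^1 x(1+x)^n\,dx$ and the Beta integral $\int_0^1 x(1-x)^n\,dx=\frac1{(n+1)(n+2)}$.
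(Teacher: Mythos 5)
Your proof is correct and takes essentially the same route as the paper: the paper likewise obtains~\eqref{eq.amk3put} and~\eqref{eq.ct31is7} by evaluating~\eqref{eq.wf0mlz9} at $v=1$ and $v=2$ respectively, simplifying the half-integer binomial coefficients through Gamma-function (duplication) identities, and your key reductions $\binom{2k}{k}\binom{k+1/2}{k}^{-1}=\frac{4^k}{2k+1}$, $\binom{2n+3}{(2n+3)/2}\binom{n+2}{1/2}^{-1}=\frac{2^{2n+2}}{n+2}$, and $\binom{2n+1}{(2n+1)/2}\binom{n+1}{1/2}^{-1}=\frac{2^{2n}}{n+1}$ all check out. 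The self-contained integral verification of~\eqref{eq.amk3put} via $\frac{1}{2k+1}=\int_0^1 x^{2k}\,dx$ is a nice independent confirmation, but it is not part of the paper's argument and is not needed.
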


\begin{proof}
Evaluation of~\eqref{eq.wf0mlz9} at $v=1$ gives~\eqref{eq.amk3put} while evaluation at $v=2$ yields~\eqref{eq.ct31is7}. In deriving~\eqref{eq.amk3put}, we used the following relationships between binomial coefficients:
\begin{gather}
\binom{{r}}{{1/2}} = \frac{{2^{2r + 1} }}{\pi }\,\binom{{2r}}{r}^{ - 1} ,\label{eq.muz1im8}\\
\binom{{r}}{{r/2}} = \frac{{2^{2r} }}{\pi }\,\binom{{r}}{{\left( {r - 1} \right)/2}}^{-1},\\
\binom{{r + 1/2}}{r} = \left( {2r + 1} \right)\,2^{ - 2r} \,\binom{{2r}}{r} ,
\end{gather}
and
\begin{equation}\label{eq.mv0q30s}
r\binom sr=s\binom {s - 1}{r - 1};
\end{equation}
all of which can be derived by using the Gamma function identities:
\begin{equation*}
\Gamma \left( {u + \frac{1}{2}} \right) = \sqrt \pi\, 2^{-2u}\binom{{2u}}{u}\,\Gamma \left( {u + 1} \right),
\end{equation*}
and
\begin{equation*}
\Gamma \left( { - u + \frac{1}{2}} \right) = ( - 1)^u\, 2^{2u} \,\binom{{2u}}{u}^{ - 1} \frac{{\sqrt \pi  }}{{\Gamma \left( {u + 1} \right)}},
\end{equation*}
together with the definition of the generalized binomial coefficients as given in~\eqref{eq.o0ohqay}.
\end{proof}

\begin{proposition}
If $m$ and $n$ are non-negative integers, then
\begin{equation}
\sum_{k = 0}^n {( - 1)^k \binom{{n}}{k}\frac{{2^{k + m} }}{{k + m + 1}}}  =  \begin{cases}
 \sum_{k = 0}^{\left\lfloor {m/2} \right\rfloor } {\binom{{m}}{{2k}}\frac{{1}}{{2k + n + 1}}},&\text{if $n$ is even};  \\ 
 -\sum_{k = 1}^{\left\lceil {m/2} \right\rceil } {\binom{{m}}{{2k - 1}}\frac{{1}}{{2k + n}}},&\text{if $n$ is odd}.  \\ 
 \end{cases} 
\end{equation}
\end{proposition}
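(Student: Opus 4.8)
The plan is to obtain the stated identity as the specialization $v=1$ of Theorem~\ref{thm.gcd6vin}, equation~\eqref{eq.r9e10tq}. The value $v=1$ is chosen precisely because it collapses every generalized (half-integer) binomial coefficient occurring in~\eqref{eq.r9e10tq} into the elementary rational factors $2^{k+m}/(k+m+1)$ on the left and $1/(2k+n+1)$ (respectively $1/(2k+n)$) on the right. Since $v=1$ satisfies $\Re v>-1$, the substitution is legitimate and no pole of any generalized binomial coefficient is met, so the identity of Theorem~\ref{thm.gcd6vin} applies verbatim.

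First I would simplify the summand on the left. Writing $p=k+m$, the factor to be evaluated is $2^{-p}\binom{2p+1}{(2p+1)/2}\binom{p+1}{1/2}^{-1}$. Using the definition~\eqref{eq.o0ohqay} together with the Legendre duplication formula for the Gamma function (equivalently, the relations~\eqref{eq.muz1im8}--\eqref{eq.mv0q30s}), I would rewrite $\binom{2p+1}{(2p+1)/2}$ and $\binom{p+1}{1/2}$ in terms of $\Gamma(p+1)$, $\Gamma(p+2)$ and $\Gamma(3/2)$. After the $\sqrt{\pi}$ factors cancel and one applies $\Gamma(p+2)=(p+1)\,\Gamma(p+1)$, the whole factor reduces to $2^{p}/(p+1)=2^{k+m}/(k+m+1)$, which is exactly the summand required on the left-hand side.

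Next I would treat the right-hand side, handling both parities at once by setting $q=2k+n$ when $n$ is even and $q=2k+n-1$ when $n$ is odd, so that $q$ is even in either case and $2^{-n-2k}=2^{-q}$, respectively $2^{-n-2k+1}=2^{-q}$. The factor to evaluate is then $2^{-q}\binom{q}{q/2}\binom{(q+1)/2}{q/2}^{-1}$, and by the same duplication-formula manipulation, using $\Gamma((q+3)/2)=\tfrac{q+1}{2}\Gamma((q+1)/2)$ and $\Gamma(3/2)=\tfrac{\sqrt{\pi}}{2}$, this collapses to $1/(q+1)$. Substituting back gives $1/(2k+n+1)$ in the even case and $1/(2k+n)$ in the odd case, while the overall minus sign in the odd case is already supplied by~\eqref{eq.r9e10tq}; this yields the claimed identity.

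The only genuine obstacle is the bookkeeping for the half-integer binomial coefficients: one must apply the duplication formula to the correct Gamma argument and verify that every spurious $\sqrt{\pi}$ cancels, leaving clean rationals. As a cross-check, and as a self-contained alternative that bypasses all Gamma-function algebra, I would rederive the result directly from~\eqref{eq.ff6aeas}: multiplying both sides by the common factor $\sin x\,(1+\cos x)^m=2^{m+1}\cos^{2m+1}(x/2)\sin(x/2)$ and integrating over $[0,\pi]$, the elementary evaluations $\int_0^\pi\cos^{a}(x/2)\sin(x/2)\,dx=2/(a+1)$ and $\int_0^\pi\cos^{b}x\,\sin x\,dx=(1+(-1)^b)/(b+1)$ reproduce both sides up to a common factor of $2$, which cancels and confirms the two simplifications above.
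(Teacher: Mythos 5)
Your proposal is correct and follows exactly the paper's proof, which simply evaluates~\eqref{eq.r9e10tq} at $v=1$; your Gamma-function reductions of the half-integer binomial coefficients to $2^{k+m}/(k+m+1)$ and $1/(2k+n+1)$ (resp.\ $1/(2k+n)$) are the computations the paper leaves implicit. The elementary cross-check via~\eqref{eq.ff6aeas} is a nice bonus but is essentially a rerun of the theorem's own proof specialized to $v=1$.
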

In particular,
\begin{equation}
\sum_{k = 0}^n {\frac{{( - 1)^k \binom{{n}}{k}2^k }}{{k + 1}}}  =  \begin{cases}
 \frac1{n + 1},&\text{if $n$ is even};  \\ 
 0,&\text{if $n$ is odd};  \\ 
 \end{cases} 
\end{equation}
and
\begin{equation}
\sum_{k = 0}^n {\frac{{( - 1)^k \binom{{n}}{k}2^{k + 1}}}{{k + 2}}}  =  \begin{cases}
 \frac1{n + 1},&\text{if $n$ is even};  \\ 
 -\frac1{n + 2},&\text{if $n$ is odd}.  \\ 
 \end{cases} 
\end{equation}

\begin{proof}
Evaluate~\eqref{eq.r9e10tq} at $v=1$.

\end{proof}

\section{Complements of Knuth's old sum}\label{sec.complement}

\begin{theorem}
If $n$ is a non-negative integer, then
\begin{equation}\label{eq.complement1}
\sum_{k = 0}^n {( - 1)^k \binom{{2k}}{k}\binom{{2\left( {n - k} \right)}}{{n - k}}}
=  \begin{cases}
 2^n \binom{n}{n/2},&\text{if $n$ is even;} \\ 
 0,&\text{if $n$ is odd.} \\ 
 \end{cases} 
\end{equation}

\end{theorem}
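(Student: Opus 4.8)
The plan is to treat the left-hand side as a signed Cauchy convolution of central binomial coefficients and evaluate it with ordinary generating functions, since convolutions are exactly what products of power series encode. The one ingredient I need is the classical generating function
\begin{equation*}
\sum_{k\ge 0}\binom{2k}{k}t^k=(1-4t)^{-1/2},
\end{equation*}
which follows directly from the generalized binomial theorem $(1+z)^{-1/2}=\sum_{k\ge 0}\binom{-1/2}{k}z^k$ upon setting $z=-4t$ and using the elementary reduction $(-4)^k\binom{-1/2}{k}=\binom{2k}{k}$; the latter is immediate from the definition~\eqref{eq.o0ohqay} of the generalized binomial coefficient together with the Gamma-function identities already invoked in this paper.

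First I would record the two series I need. Replacing $t$ by $-t$ gives the alternating companion
\begin{equation*}
\sum_{k\ge 0}(-1)^k\binom{2k}{k}t^k=(1+4t)^{-1/2}.
\end{equation*}
The quantity on the left of~\eqref{eq.complement1} is the coefficient of $t^n$ in the product of $\sum_k(-1)^k\binom{2k}{k}t^k$ with $\sum_k\binom{2k}{k}t^k$, because forming that product and collecting the coefficient of $t^n$ reproduces precisely $\sum_{k=0}^n(-1)^k\binom{2k}{k}\binom{2(n-k)}{n-k}$.

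Next I would multiply the two closed forms and simplify,
\begin{equation*}
(1+4t)^{-1/2}(1-4t)^{-1/2}=\bigl((1-4t)(1+4t)\bigr)^{-1/2}=(1-16t^2)^{-1/2}.
\end{equation*}
Applying the same generating function once more, now with argument $16t^2$, yields
\begin{equation*}
(1-16t^2)^{-1/2}=\sum_{j\ge 0}\binom{2j}{j}4^{j}t^{2j}.
\end{equation*}
Reading off the coefficient of $t^n$ finishes the proof: when $n$ is odd there is no such term, so the sum is $0$, and when $n=2j$ is even the coefficient is $\binom{2j}{j}4^{j}=2^{n}\binom{n}{n/2}$, which is exactly the right-hand side of~\eqref{eq.complement1}.

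The computation itself is short, so the only genuinely substantive step is recognizing that the alternating sign converts $(1-4t)^{-1/2}$ into $(1+4t)^{-1/2}$, so that the product collapses to the even function $(1-16t^2)^{-1/2}$; it is this parity that simultaneously annihilates the odd-index sums and produces a central binomial coefficient in the even case. I expect the main obstacle to be purely one of rigor, namely justifying the manipulations as an identity of formal power series (or, equivalently, observing that all three series converge for $|t|<1/4$), which is routine. I note in passing that one could instead stay within the integral framework of Lemma~\ref{lem.ibicayr} by substituting $\binom{2k}{k}=\tfrac{4^k}{\pi}\int_0^\pi\cos^{2k}x\,dx$ for both factors, turning the sum into a double integral of a finite geometric series; but the resulting denominator $\cos^2 x+\cos^2 y$ makes that route far less transparent than the generating-function argument, so I would not pursue it.
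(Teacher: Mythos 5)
Your proof is correct, but it takes a genuinely different route from the paper's. The paper proves this identity by setting $a=\cos^2(x/2)$ and $b=-\sin^2(x/2)$ in the binomial theorem to obtain $\sum_{k=0}^n(-1)^{n-k}\binom{n}{k}\cos^{2k}(x/2)\sin^{2n-2k}(x/2)=\cos^n x$, and then integrating term-wise over $[0,\pi]$ using Lemma~\ref{lem.ibicayr}: the prefactor $\binom{n}{k}$ cancels against the $\binom{n}{k}^{-1}$ produced by the integral $I(2k,2n-2k)$, leaving precisely the convolution sum on the left, while $\int_0^\pi\cos^n x\,dx$ supplies the parity-dependent right-hand side. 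Your generating-function argument --- identifying the sum as the coefficient of $t^n$ in $(1+4t)^{-1/2}(1-4t)^{-1/2}=(1-16t^2)^{-1/2}$ --- is the more standard attack on convolution identities; it is shorter, self-contained, makes the parity phenomenon transparent (the product is an even function of $t$), and would give the companion identity~\eqref{eq.complement2} for free by squaring $(1-4t)^{-1/2}$ instead of taking the product of the two conjugate series. What the paper's integral method buys instead is uniformity and generality: the same device, applied after multiplying the trigonometric identity by $\sin^v x$, immediately yields the $v$-parameter generalization stated in the theorem that follows this one in the paper, a refinement that is not readily visible from the generating-function side since the extra parameter enters through the Beta integral rather than through any power-series structure.
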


\begin{proof}
Set $a=\cos^2(x/2)$ and $b=-\sin^2(x/2)$ in the binomial theorem
\begin{equation}\label{eq.jch33hz}
\sum_{k = 0}^n {\binom{{n}}{k}a^k b^{n - k} }  = \left( {a + b} \right)^n,
\end{equation}
to obtain
\begin{equation}\label{ej40f4q}
\sum_{k = 0}^n {( - 1)^{n - k} \binom{{n}}{k}\cos ^{2k} \left( {\frac{x}{2}} \right)\sin ^{2n - 2k} \left( {\frac{x}{2}} \right)}  = \cos ^n x,
\end{equation}
from which~\eqref{eq.complement1} follows by term-wise integration using Lemma~\ref{lem.ibicayr}.
\end{proof}

\begin{theorem}
If $n$ is a non-negative integer, then
\begin{equation*}
\sum_{k=0}^n{\binom{2n - 2k}{n-k}\binom{2k}k}=2^{2n}.
\end{equation*}

\end{theorem}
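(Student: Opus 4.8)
The plan is to mirror the proof of the preceding complement~\eqref{eq.complement1}, replacing the minus sign by a plus sign so that the two squared trigonometric functions combine through the Pythagorean identity rather than through the double-angle formula. Concretely, in the binomial theorem~\eqref{eq.jch33hz} I would set $a=\cos^2(x/2)$ and $b=\sin^2(x/2)$. Since now $a+b=\cos^2(x/2)+\sin^2(x/2)=1$, the right-hand side collapses to $1^n=1$, and the expansion reads
\begin{equation*}
\sum_{k=0}^n \binom{n}{k}\cos^{2k}(x/2)\sin^{2(n-k)}(x/2)=1.
\end{equation*}

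Next I would integrate both sides over $[0,\pi]$. The right-hand side contributes simply $\int_0^\pi 1\,dx=\pi$. For the left-hand side I integrate term by term using~\eqref{int1} with $u=2k$ and $v=2(n-k)$; since $(u+v)/2=n$ and $u/2=k$, Lemma~\ref{lem.ibicayr} gives
\begin{equation*}
\int_0^\pi \cos^{2k}(x/2)\sin^{2(n-k)}(x/2)\,dx = 2^{-2n}\,\pi\,\binom{2k}{k}\binom{2(n-k)}{n-k}\binom{n}{k}^{-1}.
\end{equation*}

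The crucial observation is that the factor $\binom{n}{k}^{-1}$ appearing in $I(2k,2(n-k))$ exactly cancels the binomial weight $\binom{n}{k}$ supplied by the binomial theorem. Hence termwise integration yields
\begin{equation*}
2^{-2n}\,\pi\sum_{k=0}^n \binom{2k}{k}\binom{2(n-k)}{n-k}=\pi,
\end{equation*}
and dividing through by $2^{-2n}\pi$ produces $\sum_{k=0}^n \binom{2(n-k)}{n-k}\binom{2k}{k}=2^{2n}$, as required.

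I do not anticipate a genuine obstacle: the only real decision is the choice of starting identity, and the ``$a+b=1$'' substitution is essentially forced once one notices that the reciprocal binomial coefficient built into $I(u,v)$ is precisely what removes the $\binom{n}{k}$ introduced by the binomial theorem. The one point worth double-checking is that no parity hypothesis on $n$ is needed here: unlike~\eqref{eq.s4qk1l3} or~\eqref{int2}, the integral~\eqref{int1} is strictly positive for every integer $k$ with $0\le k\le n$, so every term survives and the clean closed form $2^{2n}$ holds for all non-negative integers $n$.
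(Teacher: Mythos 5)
Your proposal is correct and is exactly the paper's proof: the paper likewise sets $a=\cos^2(x/2)$, $b=\sin^2(x/2)$ in~\eqref{eq.jch33hz} to get~\eqref{eq.zkzhma8} and then integrates term-wise over $[0,\pi]$ via Lemma~\ref{lem.ibicayr}. You have merely written out the cancellation of $\binom{n}{k}$ against the $\binom{n}{k}^{-1}$ coming from $I(2k,2(n-k))$, which the paper leaves implicit.
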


\begin{proof}
Set $a=\cos^2(x/2)$ and $b=\sin^2(x/2)$ in the binomial theorem~\eqref{eq.jch33hz} to obtain
\begin{equation}\label{eq.zkzhma8}
\sum_{k = 0}^n {\binom{{n}}{k}\cos ^{2k} \left( {\frac{x}{2}} \right)\sin ^{2n - 2k} \left( {\frac{x}{2}} \right)}  = 1,
\end{equation}
from which the stated identity follows by term-wise integration using Lemma~\ref{lem.ibicayr}.
\end{proof}

Next, we present a generalization of~\eqref{eq.complement1}.
\begin{theorem}
If $n$ is a non-negative integer and $v$ is a real number, then
\begin{equation}
\begin{split}
\sum_{k = 0}^n {( - 1)^k \binom{{n}}{k}\binom{{2k + v}}{{\left( {2k + v} \right)/2}}\binom{{2n - 2k + v}}{{\left( {2n - 2k + v} \right)/2}}\binom{{n + v}}{{\left( {2k + v} \right)/2}}^{ - 1} } \\
 =  
\begin{cases}
 2^n \binom{{n}}{{n/2}}\binom{{v}}{{v/2}}\binom{{\left( {n + v} \right)/2}}{{v/2}}^{-1},&\text{if $n$ is even;} \\ 
 0,&\text{if $n$ is odd.} \\ 
\end{cases} 
 .
\end{split}
\end{equation}

\end{theorem}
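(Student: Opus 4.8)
The plan is to mimic the proof of~\eqref{eq.complement1}, inserting the factor $\cos^v(x/2)\sin^v(x/2)$ before integrating. Concretely, I would start from the identity~\eqref{ej40f4q},
\[
\sum_{k=0}^n (-1)^{n-k}\binom{n}{k}\cos^{2k}(x/2)\sin^{2n-2k}(x/2) = \cos^n x,
\]
and multiply both sides by $\cos^v(x/2)\sin^v(x/2)$. On the left this turns the $k$-th term's trigonometric factor into $\cos^{2k+v}(x/2)\sin^{2n-2k+v}(x/2)$, while on the right I would use the half-angle identity $\cos^v(x/2)\sin^v(x/2)=2^{-v}\sin^v x$ to rewrite the right-hand side as $2^{-v}\cos^n x\,\sin^v x$.

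Next I would integrate both sides over $[0,\pi]$ term by term. For the left-hand side, each integral is exactly $I(2k+v,\,2n-2k+v)$, and~\eqref{int1} evaluates it (with first argument $u=2k+v$ and second argument $2n-2k+v$, so that $(u+(2n-2k+v))/2=n+v$) to
\[
2^{-2n-2v}\,\pi\,\binom{2k+v}{(2k+v)/2}\binom{2n-2k+v}{(2n-2k+v)/2}\binom{n+v}{(2k+v)/2}^{-1},
\]
which is precisely the summand appearing in the theorem, up to the common factor $2^{-2n-2v}\pi$. For the right-hand side I would invoke~\eqref{int2}: the integral $\int_0^\pi\cos^n x\,\sin^v x\,dx = J(n,v)$ vanishes when $n$ is odd and equals $2^{-n-v}\pi\binom{n}{n/2}\binom{v}{v/2}\binom{(n+v)/2}{n/2}^{-1}$ when $n$ is even.

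Finally I would clear the common factors. For $n$ odd the right-hand side is zero, so the alternating sum vanishes; for $n$ even, canceling $2^{-2n-2v}\pi$ against $2^{-v}\cdot 2^{-n-v}\pi$ leaves the factor $2^{n}$, and using $(-1)^{n-k}=(-1)^k$ produces the sign displayed in the statement. The one step that is not purely mechanical is matching the two closed forms: I must note the binomial symmetry $\binom{(n+v)/2}{n/2}=\binom{(n+v)/2}{v/2}$, which converts the $J(n,v)$ expression into the form $2^{n}\binom{n}{n/2}\binom{v}{v/2}\binom{(n+v)/2}{v/2}^{-1}$ stated in the theorem. I expect the main care to lie in this bookkeeping of the powers of $2$ and in the symmetry rewrite, rather than in any genuine difficulty; exactly as for~\eqref{int1} and~\eqref{int2}, the argument is valid for $\Re v>-1$ and extends to all real $v$ by analytic continuation of the gamma-function expressions.
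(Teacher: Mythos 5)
Your proposal is correct and is essentially the paper's own proof: the paper multiplies~\eqref{ej40f4q} by $\sin^v x$ and integrates over $[0,\pi]$ via Lemma~\ref{lem.ibicayr}, and your factor $\cos^v(x/2)\sin^v(x/2)=2^{-v}\sin^v x$ is the same multiplier up to a constant, with identical use of~\eqref{int1} on the left and~\eqref{int2} on the right. Your bookkeeping of the powers of $2$ and the symmetry rewrite $\binom{(n+v)/2}{n/2}=\binom{(n+v)/2}{v/2}$ are exactly the steps the paper leaves implicit.
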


\begin{proof}
Multiply through~\eqref{ej40f4q} by $\sin^v x$ and integrate from $0$ to $\pi$, using Lemma~\ref{lem.ibicayr}.
\end{proof}
We conclude this section with a generalization of~\eqref{eq.complement2}.
\begin{theorem}
If $n$ is a non-negative integer and $v$ is a real number, then
\begin{equation}
\sum_{k = 0}^n {\binom{{n}}{k}\binom{{2k + v}}{{\left( {2k + v} \right)/2}}\binom{{2n - 2k + v}}{{\left( {2n - 2k + v} \right)/2}}\binom{{n + v}}{{\left( {2k + v} \right)/2}}^{ - 1} } =2^{2n}\binom v{v/2}.
\end{equation}

\end{theorem}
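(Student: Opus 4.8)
The plan is to mirror the proofs of the preceding complement theorems: begin with a trigonometric form of the binomial theorem, multiply by a power of $\sin x$, and integrate term by term using Lemma~\ref{lem.ibicayr}. Since the target identity generalizes~\eqref{eq.complement2}, the natural starting point is~\eqref{eq.zkzhma8},
$$\sum_{k=0}^n \binom{n}{k}\cos^{2k}(x/2)\sin^{2n-2k}(x/2)=1,$$
obtained by setting $a=\cos^2(x/2)$ and $b=\sin^2(x/2)$ in the binomial theorem.

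First I would multiply both sides of~\eqref{eq.zkzhma8} by $\sin^v x$. Invoking the half-angle relation $\sin^v x=2^v\sin^v(x/2)\cos^v(x/2)$ already used in the proof of Theorem~\ref{thm.gcd6vin}, each left-hand term becomes
$$\binom{n}{k}\,2^v\,\cos^{2k+v}(x/2)\,\sin^{2n-2k+v}(x/2),$$
which is precisely the half-angle form evaluated by the integral $I(u,v)$ of~\eqref{int1}.

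Next I would integrate from $0$ to $\pi$. Applying~\eqref{int1} with cosine exponent $2k+v$ and sine exponent $2n-2k+v$, the exponent sum is $2n+2v$, so the powers of $2$ collapse to $2^{-2n-2v}$ and the central binomial index becomes $(2k+v+2n-2k+v)/2=n+v$; together with the leading factor $2^v$ this produces $2^{-2n-v}\pi$ times the claimed sum. For the right-hand side, $\int_0^\pi\sin^v x\,dx=2^v\,I(v,v)$, and in $I(v,v)$ the central inverse factor cancels one of the two equal outer factors, leaving $\int_0^\pi\sin^v x\,dx=2^{-v}\pi\binom{v}{v/2}$. Equating both sides and dividing through by $2^{-2n-v}\pi$ delivers exactly $2^{2n}\binom{v}{v/2}$.

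The argument is almost entirely bookkeeping; the only step that demands real care is verifying that the three generalized binomial coefficients output by $I(u,v)$ assemble into precisely $\binom{2k+v}{(2k+v)/2}\binom{2n-2k+v}{(2n-2k+v)/2}\binom{n+v}{(2k+v)/2}^{-1}$ and that the residual power of $2$ is exactly $2^{2n}$. A useful sanity check is that the right-hand integral $\int_0^\pi\sin^v x\,dx$ is itself the $n=0$ instance of the left-hand side, so the separate evaluation of that integral must be consistent with the general formula, which it is.
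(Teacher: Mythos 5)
Your proof is correct and follows exactly the paper's approach: the paper's own proof is the one-line instruction to multiply~\eqref{eq.zkzhma8} by $\sin^v x$ and integrate from $0$ to $\pi$ via Lemma~\ref{lem.ibicayr}, which is precisely what you do, with the bookkeeping (the half-angle conversion, the application of $I(u,v)$ from~\eqref{int1}, and the evaluation $\int_0^\pi\sin^v x\,dx=2^{-v}\pi\binom{v}{v/2}$) carried out correctly.
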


\begin{proof}
Multiply through~\eqref{eq.zkzhma8} by $\sin^v x$ and integrate from $0$ to $\pi$, using Lemma~\ref{lem.ibicayr}.
\end{proof}

\section{Combinatorial identities associated with polynomial identities of a certain type}\label{sec.combinat}
In this section we derive the combinatorial identities associated with any polynomial identity having the following form:
\begin{equation}\label{eq.zig6lng}
P(t,\ldots)=\sum_{k = s}^n {f(k)\left( {1 + t} \right)^{p(k)} }  = \sum_{k = m}^r {g(k)\,t^{q(k)} };
\end{equation}
where $m$, $n$, $r$ and $s$ are non-negative integers, $p(k)$ and $q(k)$ are sequences of non-negative integers, $f(k)$ and $g(k)$ are sequences, and $t$ is a complex variable. The ellipsis (\ldots) indicates the presence of other parameters and variables.

\begin{theorem}\label{thm.a2bugv6}
Let $P(t,\ldots)$ be the polynomial identity given in~\eqref{eq.zig6lng}. Let $u$ and $v$ be arbitrary complex numbers such that $\Re u>-1$ and $\Re v>-1$. Then
\begin{equation}\label{eq.rxkjh8x}
\sum_{k = s}^n {f(k)\binom{{p(k) + u + v + 1}}{{u + 1}}^{ - 1} }  = \frac{u + 1}{v + 1}\,\sum_{k = m}^r {( - 1)^{q(k)} g(k)\binom{{q(k) + u + v + 1}}{{v + 1}}^{-1}} .
\end{equation}

\end{theorem}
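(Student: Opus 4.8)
The plan is to convert each inverse binomial coefficient appearing in~\eqref{eq.rxkjh8x} into a Beta-function integral over $[0,1]$, and then to exploit the polynomial identity~\eqref{eq.zig6lng} under the sign change $t\mapsto -t$. First I would record the elementary relation, valid for a non-negative integer $p$ whenever $\Re u>-1$ and $\Re v>-1$,
\[
\binom{p+u+v+1}{u+1}^{-1} = (u+1)\int_0^1 t^u (1-t)^{p+v}\,dt ,
\]
which follows at once from the Euler Beta integral $\int_0^1 t^{a-1}(1-t)^{b-1}\,dt = \Gamma(a)\Gamma(b)/\Gamma(a+b)$ together with the definition~\eqref{eq.o0ohqay} of the generalized binomial coefficient and the functional equation $\Gamma(u+2)=(u+1)\Gamma(u+1)$. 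The hypotheses on $u$ and $v$ guarantee convergence, since $p\ge 0$ forces $\Re(p+v)>-1$.

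Next I would replace the variable $t$ by $-t$ in~\eqref{eq.zig6lng}, obtaining
\[
\sum_{k=s}^n f(k)\,(1-t)^{p(k)} \;=\; \sum_{k=m}^r (-1)^{q(k)}\, g(k)\, t^{q(k)} .
\]
Multiplying both sides by $(u+1)\,t^u(1-t)^v$ and integrating termwise from $0$ to $1$ is the crux of the argument; termwise integration is legitimate because both sums are finite. By the integral representation just recorded, the left-hand side collapses exactly to $\sum_{k=s}^n f(k)\binom{p(k)+u+v+1}{u+1}^{-1}$, which is the left-hand side of~\eqref{eq.rxkjh8x}.

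For the right-hand side I would use the companion evaluation
\[
\int_0^1 t^{u+q}(1-t)^v\,dt \;=\; \frac{1}{v+1}\,\binom{q+u+v+1}{v+1}^{-1},
\]
again a direct consequence of the Beta integral, convergent because $q\ge 0$ and $\Re u>-1$. Applying this to each term turns the right-hand side into $\frac{u+1}{v+1}\sum_{k=m}^r (-1)^{q(k)} g(k)\binom{q(k)+u+v+1}{v+1}^{-1}$, which completes the identity.

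The only genuine obstacle is the bookkeeping: one must translate cleanly between the Beta function and the generalized binomial coefficients of~\eqref{eq.o0ohqay}, carefully tracking the shifts by one in the parameters (the prefactors $u+1$ and $v+1$ arise precisely from passing from $\Gamma(u+2)$ to $\Gamma(u+1)$ and from $\Gamma(v+2)$ to $\Gamma(v+1)$). One must also confirm that the stated conditions $\Re u>-1$ and $\Re v>-1$ suffice for both integrals—which they do, since every exponent $p(k)$ and $q(k)$ is a non-negative integer, so neither endpoint singularity can become non-integrable.
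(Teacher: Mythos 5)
Your proof is correct and follows essentially the same route as the paper: substitute $-t$ for $t$ in~\eqref{eq.zig6lng}, multiply by $t^u(1-t)^v$, and integrate termwise over $[0,1]$ using the Euler Beta integral, which is exactly the paper's variant~\eqref{eq.vmrnwwf} of~\eqref{beta}. The only cosmetic difference is that you carry the factor $u+1$ inside the multiplier and spell out the Gamma-function bookkeeping and convergence checks that the paper leaves implicit.
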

In particular,
\begin{equation}
\sum_{k = s}^n {\frac{{f(k)}}{{p(k) + 1}}}  = \sum_{k = m}^r {\frac{{( - 1)^{q(k)} g(k)}}{{q(k) + 1}}}.
\end{equation}
\begin{proof}
Write $-t$ for $t$ in~\eqref{eq.zig6lng} and multiply through by $t^u(1 - t)^v$ to obtain
\begin{equation*}
\sum_{k = s}^n {f(k)\left( {1 - t} \right)^{p(k) + v} t^u }  = \sum_{k = m}^r {( - 1)^{q(k)} g(k)\left( {1 - t} \right)^v t^{q(k) + u} };
\end{equation*}
from which~\eqref{eq.rxkjh8x} follows after integrating from $0$ to $1$, using the Beta function (variant of~\eqref{beta}):
\begin{equation}\label{eq.vmrnwwf}
\int_0^1 {\left( {1 - t} \right)^x t^y\, dt}  = \frac{1}{{x + 1}}\,\binom{{x + y + 1}}{{x + 1}}^{ - 1} ;
\end{equation}
for $\Re x>-1$ and $\Re y>-1$.
\end{proof}

\begin{theorem}
Let $P(t,\ldots)$ be the polynomial identity given in~\eqref{eq.zig6lng}. Let $u$ and $v$ be arbitrary complex numbers such that $\Re v>-1$, $\Re(2(u - p(j)) + v)>-1$, $\Re(2(u - q(j)) + v)>-1$ and $2q(j) + \Re v>-1$ for every non-negative integer $j$. Then
\begin{equation}\label{eq.a4q7ien}
\begin{split}
&\sum_{k = s}^n {f(k)2^{2p(k)} \binom{{2\left( {u - p(k)} \right) + v}}{{\left( {2\left( {u - p(k)} \right) + v} \right)/2}}\binom{{u - p(k) + v}}{{v/2}}^{ - 1} }\\
&\qquad  = \binom{{v}}{{v/2}}^{ - 1} \sum_{k = m}^r {g(k)\binom{{2\left( {u - q(k)} \right) + v}}{{\left( {2\left( {u - q(k)} \right) + v} \right)/2}}\binom{{2q(k) + v}}{{\left( {2q(k) + v} \right)/2}}\binom{{u + v}}{{\left( {2q(k) + v} \right)/2}}^{ - 1} } ,
\end{split}
\end{equation}
and
\begin{equation}\label{eq.ozzuug9}
\begin{split}
&\sum_{k = m}^r {( - 1)^{q(k)} g(k)2^{2q(k)} \binom{{2\left( {u - q(k)} \right) + v}}{{\left( {2\left( {u - q(k)} \right) + v} \right)/2}}\binom{{u - q(k) + v}}{{v/2}}^{ - 1} }\\
&\qquad  = \binom{{v}}{{v/2}}^{ - 1} \sum_{k = s}^n {( - 1)^{p(k)} f(k)\binom{{2\left( {u - p(k)} \right) + v}}{{\left( {2\left( {u - p(k)} \right) + v} \right)/2}}\binom{{2p(k) + v}}{{\left( {2p(k) + v} \right)/2}}\binom{{u + v}}{{\left( {2p(k) + v} \right)/2}}^{ - 1} } .
\end{split}
\end{equation}

\end{theorem}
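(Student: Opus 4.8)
The plan is to run the proof of Theorem~\ref{thm.a2bugv6} again, but with the Beta-function integral~\eqref{eq.vmrnwwf} replaced by the trigonometric evaluations of $I(u,v)$ supplied by Lemma~\ref{lem.ibicayr}. The mechanism in each case is to feed a carefully chosen value of $t$ into the polynomial identity~\eqref{eq.zig6lng}, so that $(1+t)^{p(k)}$ and $t^{q(k)}$ become powers of $\cos(x/2)$ and $\sin(x/2)$; then multiply through by a single $k$-independent weight, and integrate over $[0,\pi]$. Since~\eqref{eq.zig6lng} is a genuine polynomial identity it remains valid at every value of $t$ produced by such a substitution, so termwise integration is legitimate once convergence is checked.

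For~\eqref{eq.a4q7ien} I would take $t=\tan^2(x/2)$, so that $1+t=\sec^2(x/2)$ and hence $(1+t)^{p(k)}=\cos^{-2p(k)}(x/2)$ while $t^{q(k)}=\sin^{2q(k)}(x/2)\cos^{-2q(k)}(x/2)$. Multiplying both sides of~\eqref{eq.zig6lng} by $\cos^{2u+v}(x/2)\sin^v(x/2)$ then gives
\begin{equation*}
\sum_{k=s}^n f(k)\cos^{2(u-p(k))+v}(x/2)\sin^v(x/2)=\sum_{k=m}^r g(k)\cos^{2(u-q(k))+v}(x/2)\sin^{2q(k)+v}(x/2),
\end{equation*}
after which termwise integration from $0$ to $\pi$ turns the left side into $\sum_k f(k)\,I(2(u-p(k))+v,v)$ and the right side into $\sum_k g(k)\,I(2(u-q(k))+v,2q(k)+v)$; inserting the closed form~\eqref{int1} for $I$ yields~\eqref{eq.a4q7ien}. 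The hypotheses $\Re v>-1$, $\Re(2(u-p(j))+v)>-1$ and $\Re(2(u-q(j))+v)>-1$ are precisely what force every such $I$ to converge, the conditions $2q(j)+\Re v>-1$ and $2p(j)+\Re v>-1$ being automatic from $\Re v>-1$ since $p,q$ take non-negative values.

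For~\eqref{eq.ozzuug9} I would instead substitute $t=-\sec^2(x/2)$, so that $1+t=-\tan^2(x/2)$; this is exactly what injects the signs $(-1)^{q(k)}$ and $(-1)^{p(k)}$, since now $t^{q(k)}=(-1)^{q(k)}\cos^{-2q(k)}(x/2)$ and $(1+t)^{p(k)}=(-1)^{p(k)}\sin^{2p(k)}(x/2)\cos^{-2p(k)}(x/2)$. Weighting by the same factor $\cos^{2u+v}(x/2)\sin^v(x/2)$ and integrating over $[0,\pi]$ converts the $g$-side into $\sum_k(-1)^{q(k)}g(k)\,I(2(u-q(k))+v,v)$ and the $f$-side into $\sum_k(-1)^{p(k)}f(k)\,I(2(u-p(k))+v,2p(k)+v)$, and~\eqref{int1} again closes the argument.

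The conceptual content lies entirely in choosing these two substitutions; everything else is bookkeeping, and that is where I expect the only real nuisance. One must check that the prefactors $2^{\pm(2u+2v)}$ and the stray factors of $\pi$ cancel cleanly, and, more delicately, recognize the inverse binomial coefficients appearing in~\eqref{eq.a4q7ien} and~\eqref{eq.ozzuug9} inside the closed form~\eqref{int1}. Concretely, the reflection $\binom{N}{K}=\binom{N}{N-K}$, valid for generalized binomial coefficients through~\eqref{eq.o0ohqay}, is needed to rewrite $\binom{(a+b)/2}{a/2}^{-1}$ with $a=2(u-p(k))+v$, $b=v$ as $\binom{u-p(k)+v}{v/2}^{-1}$, and likewise to turn $\binom{u+v}{(2(u-q(k))+v)/2}^{-1}$ into $\binom{u+v}{(2q(k)+v)/2}^{-1}$. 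Once these reflections are applied, the two sides match exactly.
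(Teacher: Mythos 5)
Your proposal is correct and is essentially the paper's own argument: your substitution $t=\tan^2(x/2)$ together with the weight $\cos^{2u+v}(x/2)\sin^v(x/2)$ is precisely the paper's route of homogenizing ($t=y/x$, multiplying by $x^u$) and then setting $x\to\cos^2$, $y\to\sin^2$ with weight $\sin^v$, while your $t=-\sec^2(x/2)$ realizes the paper's transformation $y\to y-x$, $x\to-x$ that swaps the two sides to produce \eqref{eq.ozzuug9}. In both cases the conclusion follows from the integrals of Lemma~\ref{lem.ibicayr} exactly as you describe, including the reflection $\binom{N}{K}=\binom{N}{N-K}$ needed to put the closed forms in the stated shape.
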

In particular,
\begin{equation}
\sum_{k = s}^n {f(k)2^{2p(k)} \binom{{2\left( {u - p(k)} \right)}}{{u - p(k)}}}  = \sum_{k = m}^r {g(k)\binom{{2\left( {u - q(k)} \right)}}{{u - q(k)}}\binom{{2q(k)}}{{q(k)}}\binom{{u}}{{q(k)}}^{ - 1} } 
\end{equation}
and
\begin{equation}
\sum_{k = m}^r {( - 1)^{q(k)} g(k)2^{2q(k)} \binom{{2\left( {u - q(k)} \right)}}{{u - q(k)}}}  = \sum_{k = s}^n {( - 1)^{p(k)} f(k)\binom{{2\left( {u - p(k)} \right)}}{{u - p(k)}}\binom{{2p(k)}}{{p(k)}}\binom{{u}}{{p(k)}}^{ - 1} } .
\end{equation}

\begin{proof}
Substituting $t=y/x$ in~\eqref{eq.zig6lng} and multiplying through by $x^w$ gives
\begin{equation}\label{eq.ib3ix27}
\sum_{k = s}^n {f(k)x^{u - p(k)} \left( {x + y} \right)^{p(k)} }  = \sum_{k = m}^r {g(k)x^{u - q(k)} y^{q(k)} }.
\end{equation}
Writing $\cos^2x$ for $x$ and $\sin^2x$ for $y$ in~\eqref{eq.ib3ix27}, multiplying through by $\sin^vx$ and integrating from $0$ to $\pi/2$ using~Lemma~\ref{lem.ibicayr} gives~\eqref{eq.a4q7ien}. Identity~\eqref{eq.ozzuug9} follows from the fact that the transformation $y\to y-x$ followed by $x\to -x$ causes~\eqref{eq.zig6lng} to become
\begin{equation*}
\sum_{k = m}^r {( - 1)^{u - q(k)} g(k)x^{u - q(k)} \left( {x + y} \right)^{q(k)} }  = \sum_{k = s}^n {( - 1)^{u - p(k)} f(k)x^{u - p(k)} y^{p(k)} }.
\end{equation*}

\end{proof}

\begin{theorem}
Let $P(t,\ldots)$ be the polynomial identity given in~\eqref{eq.zig6lng}. Let $u$ and $v$ be arbitrary complex numbers such that $\Re v>-1$, $\Re{u} - p(j)>-1$, $p(j)+\Re(v)>-1$ and $\Re u - q(j)>-1$ for every non-negative integer $j$. Then
\begin{equation}\label{eq.kmlq5k6}
\sum_{k = s}^n {( - 1)^{p(k)} f(k)\binom{{u + v}}{{u - p(k)}}^{ - 1} }  = \frac{{u + v + 1}}{{v + 1}}\sum_{k = m}^r {( - 1)^{q(k)} g(k)\binom{{u - q(k) + 1}}{{v + 1}}^{ - 1} } .
\end{equation}

\end{theorem}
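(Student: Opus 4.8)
The plan is to reuse the Beta-integration machinery behind Theorem~\ref{thm.a2bugv6}, but to feed it a transformed version of~\eqref{eq.zig6lng} engineered so that the signs $(-1)^{p(k)}$ and $(-1)^{q(k)}$ appear on the two sides. The cleanest way to produce these signs is to start from the homogeneous form~\eqref{eq.ib3ix27}, specialize $x=t$ and $y=-1$, and then multiply through by $(1-t)^v$ (equivalently, one may substitute $-1/t$ for $t$ in~\eqref{eq.zig6lng} and multiply by $t^u(1-t)^v$). Using $(t-1)^{p(k)}=(-1)^{p(k)}(1-t)^{p(k)}$, the left-hand exponents collapse to $t^{u-p(k)}(1-t)^{p(k)+v}$ and the right-hand ones to $t^{u-q(k)}(1-t)^{v}$, so that~\eqref{eq.ib3ix27} becomes the identity (of functions of $t$ on $(0,1)$)
\begin{equation*}
\sum_{k=s}^n (-1)^{p(k)} f(k)\, t^{u-p(k)}(1-t)^{p(k)+v} = \sum_{k=m}^r (-1)^{q(k)} g(k)\, t^{u-q(k)}(1-t)^{v}.
\end{equation*}

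Next I would integrate this identity term by term over $[0,1]$ and invoke the Beta evaluation~\eqref{eq.vmrnwwf}. The right-hand integrals are immediate: $\int_0^1 t^{u-q(k)}(1-t)^v\,dt=\frac{1}{v+1}\binom{u-q(k)+v+1}{v+1}^{-1}$, producing a clean $k$-independent prefactor $\frac{1}{v+1}$. The left-hand integrals, however, first appear $k$-dependent in a second way: a direct application of~\eqref{eq.vmrnwwf} gives $\int_0^1 t^{u-p(k)}(1-t)^{p(k)+v}\,dt=\frac{1}{p(k)+v+1}\binom{u+v+1}{p(k)+v+1}^{-1}$, in which both the outer fraction and the lower index still carry $p(k)$.

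The step I expect to be the crux is showing that this left-hand contribution reorganizes into a single inverse binomial weighted by a constant. This is the absorption identity $\frac{1}{b+1}\binom{a+1}{b+1}^{-1}=\frac{1}{a+1}\binom{a}{b}^{-1}$ (two ways of writing the same Beta value), applied with $a=u+v$ and $b=p(k)+v$, together with the reflection $\binom{u+v}{p(k)+v}=\binom{u+v}{u-p(k)}$. These migrate the entire $p(k)$-dependence into $\binom{u+v}{u-p(k)}^{-1}$ and turn the prefactor into the $k$-free constant $\frac{1}{u+v+1}$. Equating the two integrated sides then reads
\begin{equation*}
\frac{1}{u+v+1}\sum_{k=s}^n (-1)^{p(k)} f(k)\binom{u+v}{u-p(k)}^{-1}=\frac{1}{v+1}\sum_{k=m}^r (-1)^{q(k)} g(k)\binom{u-q(k)+v+1}{v+1}^{-1},
\end{equation*}
and clearing the factor $u+v+1$ delivers the overall prefactor $\frac{u+v+1}{v+1}$ of~\eqref{eq.kmlq5k6}. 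Finally, I would note that the four hypotheses $\Re v>-1$, $\Re u-p(j)>-1$, $p(j)+\Re v>-1$ and $\Re u-q(j)>-1$ are exactly the conditions guaranteeing convergence of every Beta integral used above, so no additional assumptions are required.
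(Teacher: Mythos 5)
Your proof is correct and is essentially the paper's own proof: the paper likewise sets $y=-1$ in~\eqref{eq.ib3ix27}, multiplies through by $(1-x)^v$, and integrates over $[0,1]$ via~\eqref{eq.vmrnwwf}; your absorption-and-reflection step, rewriting $\frac{1}{p(k)+v+1}\binom{u+v+1}{p(k)+v+1}^{-1}$ as $\frac{1}{u+v+1}\binom{u+v}{u-p(k)}^{-1}$, simply makes explicit a detail the paper leaves to the reader.

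One caveat: the right-hand side your integration actually produces carries $\binom{u+v-q(k)+1}{v+1}^{-1}$, whereas the theorem as printed has $\binom{u-q(k)+1}{v+1}^{-1}$, and these agree only at $v=0$. Your form is the correct one: for the identity $(1+t)^1=1+t$ with $u=2$, $v=1$, the left side of~\eqref{eq.kmlq5k6} is $-\binom{3}{1}^{-1}=-\frac13$, which matches $\frac{4}{2}\left(\binom{4}{2}^{-1}-\binom{3}{2}^{-1}\right)=-\frac13$ but not the printed version's $\frac{4}{2}\left(\binom{3}{2}^{-1}-\binom{2}{2}^{-1}\right)=-\frac43$. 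So the printed statement contains a typo (the upper index $u-q(k)+1$ should read $u+v-q(k)+1$), which your derivation silently corrects; you should flag this explicitly rather than assert that your final display delivers~\eqref{eq.kmlq5k6} verbatim.
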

In particular,
\begin{equation}
\sum_{k = s}^n {( - 1)^{p(k)} f(k)\binom{{u}}{{p(k)}}^{ - 1} }  = \left( {u + 1} \right)\sum_{k = m}^r {( - 1)^{q(k)} \frac{{g(k)}}{{u - q(k) + 1}}} .
\end{equation}

\begin{proof}
Set $y=-1$ in~\eqref{eq.ib3ix27} and multiply through by $(1 - x)^v$ to obtain
\begin{equation*}
\sum_{k = s}^n {( - 1)^{p(k)} f(k)x^{u - p(k)} \left( {1 - x} \right)^{p(k) + v} }  = \sum_{k = m}^r {( - 1)^{q(k)} g(k)x^{u - q(k)} \left( {1 - x} \right)^v },
\end{equation*}
which upon integration from $0$ to $1$, using~\eqref{eq.vmrnwwf}, gives~\eqref{eq.kmlq5k6}.
\end{proof}

\begin{theorem}
Let $P(t,\ldots)$ be the polynomial identity given in~\eqref{eq.zig6lng}. Let $u$ and $v$ be arbitrary complex numbers such that $\Re u>-1$ and $\Re v>-1$. Then
\begin{equation}\label{eq.cnwt5zb}
\begin{split}
&\sum_{k = s}^n {\frac{{f(k)}}{{2^{2p(k)} }}\binom{{v}}{{v/2}}\binom{{2p(k) + u}}{{\left( {2p(k) + u} \right)/2}}\binom{{\left( {2p(k) + u + v} \right)/2}}{{v/2}}^{-1}}\\ &\qquad  = \sum_{k = m}^r {\frac{{( - 1)^{q(k)} g(k)}}{{2^{2q(k)} }}\binom{{u}}{{u/2}}\binom{{2q(k) + v}}{{\left( {2q(k) + v} \right)/2}}\binom{{\left( {2q(k) + u + v} \right)/2}}{{u/2}}^{-1}} .
\end{split}
\end{equation}

\end{theorem}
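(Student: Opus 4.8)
The plan is to follow the template of the preceding theorems in this section, but to push the polynomial identity through the half-angle integral $I(u,v)$ of~\eqref{int1} rather than the Beta integral~\eqref{eq.vmrnwwf}. First I would substitute $t=-\sin^2(x/2)$ in~\eqref{eq.zig6lng}, so that $1+t=\cos^2(x/2)$; since the $p(k)$ and $q(k)$ are non-negative integers, this produces the functional identity
\begin{equation*}
\sum_{k=s}^n f(k)\cos^{2p(k)}(x/2)=\sum_{k=m}^r(-1)^{q(k)}g(k)\sin^{2q(k)}(x/2),
\end{equation*}
valid for every real $x$. This is the exact analogue of the substitution $y=-\cos x-1$ used in the short proof of~\eqref{eq.knuth}.

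Next I would multiply both sides by $\cos^u(x/2)\sin^v(x/2)$ and integrate termwise from $0$ to $\pi$. Because $p(k),q(k)\ge 0$ and $\Re u>-1$, $\Re v>-1$, every exponent $2p(k)+u$ and $2q(k)+v$ still has real part exceeding $-1$, so each term is integrable and Lemma~\ref{lem.ibicayr} applies. The left side becomes $\sum_{k=s}^{n}f(k)\,I(2p(k)+u,v)$ and the right side becomes $\sum_{k=m}^{r}(-1)^{q(k)}g(k)\,I(u,2q(k)+v)$.

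Finally I would insert the closed form~\eqref{int1} for each $I$ and cancel the common factor $2^{-u-v}\pi$; the prefactors $2^{-2p(k)}$ and $2^{-2q(k)}$ in~\eqref{eq.cnwt5zb} then emerge from combining this $2^{u+v}$ with the $2^{-(2p(k)+u+v)}$ and $2^{-(u+2q(k)+v)}$ produced by $I$. The only step that is not pure bookkeeping is matching the last binomial on the left: \eqref{int1} delivers $\binom{(2p(k)+u+v)/2}{(2p(k)+u)/2}^{-1}$, whereas~\eqref{eq.cnwt5zb} displays $\binom{(2p(k)+u+v)/2}{v/2}^{-1}$, and these agree by the symmetry $\binom{N}{a}=\binom{N}{N-a}$ with $N=(2p(k)+u+v)/2$. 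On the right the factor $\binom{(2q(k)+u+v)/2}{u/2}^{-1}$ already appears in the form produced by~\eqref{int1}, so no flip is needed there. I expect this symmetry bookkeeping, together with keeping the two arguments of $I$ in the correct order, to be the main (though entirely routine) obstacle.
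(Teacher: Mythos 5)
Your proof is correct and is essentially the paper's own argument: the paper writes $-\sin^2 t$ for $t$ in~\eqref{eq.zig6lng}, multiplies by $\cos^u t\,\sin^v t$, and integrates from $0$ to $\pi/2$ using the Beta integral~\eqref{beta}, which is the same computation as yours after the trivial rescaling $x\mapsto 2x$ (since $I(u,v)=2K(u,v)$). Your handling of the final binomial via the symmetry $\binom{N}{a}=\binom{N}{N-a}$ is exactly the bookkeeping the paper leaves implicit, so there is nothing to correct.
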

In particular,
\begin{equation}
\sum_{k = s}^n {\frac{{f(k)}}{{2^{2p(k)} }}\binom{{2p(k)}}{{p(k)}}}  = \sum_{k = m}^r {\frac{{( - 1)^{q(k)} g(k)}}{{2^{2q(k)} }}\binom{{2q(k)}}{{q(k)}}} .
\end{equation}

\begin{proof}
Write $-\sin^2 t$ for $t$ in~\eqref{eq.zig6lng} and multiply through by $\cos^ut\,\sin^vt$ to obtain
\begin{equation*}
\sum_{k = s}^n {f(k)\cos ^{2p(k) + u} t\sin ^v t}  = \sum_{k = m}^r {( - 1)^{q(k)} g(k)\cos ^u t\sin ^{2q(k) + v} t},
\end{equation*}
from which~\eqref{eq.cnwt5zb} follows upon integration from $0$ to $\pi/2$ using Lemma~\ref{lem.ibicayr}.
\end{proof}

\begin{theorem}\label{thm.bgu7pnr}
Let $P(t,\ldots)$ be the polynomial identity given in~\eqref{eq.zig6lng}. Let $v$ be an arbitrary complex number such that $\Re v>-1$.

\begin{enumerate}

\item Suppose that, for every integer $j$, each of $q(2j)$ and $q(2j-1)$ is a sequence of non-negative integers having a definite parity but such that the parity of $q(2j)$ is different from the parity of $q(2j-1)$ for every integer $j$. 

If $q(2j)$ is an even integer for every integer $j$, then
\begin{equation}\label{eq.yv4kyfa}
\begin{split}
&\sum_{k = s}^n {\frac{{f(k)}}{{2^{p(k)} }}\binom{{2p(k) + v}}{{\left( {2p(k) + v} \right)/2}}\binom{{p(k) + v}}{{v/2}}^{ - 1} }\\  &\qquad= \sum_{k = \left\lfloor {(m + 1)/2} \right\rfloor }^{\left\lfloor {r/2} \right\rfloor } {\frac{{g(2k)}}{{2^{q(2k)} }}\binom{{q(2k)}}{{q(2k)/2}}\binom{{\left( {q(2k) + v} \right)/2}}{{v/2}}^{ - 1} },
\end{split}
\end{equation}
while if $q(2j)$ is an odd integer for every integer $j$, then
\begin{equation}\label{eq.ba9x4o9}
\begin{split}
&\sum_{k = s}^n {\frac{{f(k)}}{{2^{p(k)} }}\binom{{2p(k) + v}}{{\left( {2p(k) + v} \right)/2}}\binom{{p(k) + v}}{{v/2}}^{ - 1} }\\&\qquad  = \sum_{k = \left\lfloor {(m + 2)/2} \right\rfloor }^{\left\lceil {r/2} \right\rceil } {\frac{{g(2k - 1)}}{{2^{q(2k - 1)} }}\binom{{q(2k - 1)}}{{q(2k - 1)/2}}\binom{{\left( {q(2k - 1) + v} \right)/2}}{{v/2}}^{ - 1} } .
\end{split}
\end{equation}

\item Suppose that, for every integer $j$, each of $p(2j)$ and $p(2j-1)$ is a sequence of non-negative integers having a definite parity but such that the parity of $p(2j)$ is different from the parity of $p(2j-1)$ for every integer $j$. 

If $p(2j)$ is an even integer for every integer $j$, then
\begin{equation}\label{eq.kx7oxs0}
\begin{split}
&\sum_{k = m}^r {\frac{{g(k)( - 1)^{q(k)} }}{{2^{q(k)} }}\binom{{2q(k) + v}}{{\left( {2q(k) + v} \right)/2}}\binom{{q(k) + v}}{{v/2}}^{ - 1} }\\&\qquad  = \sum_{k = \left\lfloor {(s + 1)/2} \right\rfloor }^{\left\lfloor {n/2} \right\rfloor } {\frac{{( - 1)^{p(2k)} f(2k)}}{{2^{p(2k)} }}\binom{{p(2k)}}{{p(2k)/2}}\binom{{\left( {p(2k) + v} \right)/2}}{{v/2}}^{ - 1} } 
\end{split}
\end{equation}
while if $p(2j)$ is an odd integer for every integer $j$, then
\begin{equation}\label{eq.wnufxrh}
\begin{split}
&\sum_{k = m}^r {\frac{{g(k)( - 1)^{q(k)} }}{{2^{q(k)} }}\binom{{2q(k) + v}}{{\left( {2q(k) + v} \right)/2}}\binom{{q(k) + v}}{{v/2}}^{ - 1} }  \\&\qquad= \sum_{k = \left\lfloor {(s + 2)/2} \right\rfloor }^{\left\lceil {n/2} \right\rceil } {\frac{{( - 1)^{p(2k - 1)} f(2k - 1)}}{{2^{p(2k - 1)} }}\binom{{p(2k - 1)}}{{p(2k - 1)/2}}\binom{{\left( {p(2k - 1) + v} \right)/2}}{{v/2}}^{ - 1} } .
\end{split}
\end{equation}

\end{enumerate}

\end{theorem}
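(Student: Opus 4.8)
The plan is to deduce all four identities from a single integration of the polynomial identity~\eqref{eq.zig6lng} over the full interval $[0,\pi]$ after a trigonometric substitution, the crucial ingredient being the parity dichotomy recorded in Lemma~\ref{lem.ibicayr}: the full-angle integral $J(m,v)$ of~\eqref{int2} \emph{vanishes} whenever $m$ is odd, whereas the half-angle integral $I(u,v)$ of~\eqref{int1} never vanishes. It is precisely this vanishing that collapses a sum over all indices into a sum over indices of a single parity, producing the one-sided sums on the right of~\eqref{eq.yv4kyfa}--\eqref{eq.wnufxrh}.

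For the two identities of part~(1), I would set $t=\cos x$ in~\eqref{eq.zig6lng}. Since $1+\cos x = 2\cos^2(x/2)$, the left member becomes $\sum_{k=s}^n f(k)2^{p(k)}\cos^{2p(k)}(x/2)$ and the right member becomes $\sum_{k=m}^r g(k)\cos^{q(k)}x$. Multiplying through by $\sin^v x = 2^v\sin^v(x/2)\cos^v(x/2)$ and integrating from $0$ to $\pi$, the left-hand terms produce $I(2p(k)+v,\,v)$ via~\eqref{int1} and the right-hand terms produce $J(q(k),v)$ via~\eqref{int2}. Because $q(2j)$ and $q(2j-1)$ carry opposite fixed parities, the odd-exponent $J$-integrals all vanish: the even-indexed terms alone survive when $q(2j)$ is even, giving~\eqref{eq.yv4kyfa}, and the odd-indexed terms alone survive when $q(2j)$ is odd, giving~\eqref{eq.ba9x4o9}.

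Part~(2) is the mirror image, obtained by instead setting $1+t=\cos x$, i.e.\ $t=-2\sin^2(x/2)$, so that the left member of~\eqref{eq.zig6lng} becomes $\sum_{k=s}^n f(k)\cos^{p(k)}x$ and the right member becomes $\sum_{k=m}^r (-1)^{q(k)}g(k)2^{q(k)}\sin^{2q(k)}(x/2)$. After multiplying by $\sin^v x$ and integrating over $[0,\pi]$, the $f$-side now yields $J(p(k),v)$, which annihilates the odd-exponent terms, while the $g$-side yields $I(2q(k)+v,\,v)$ after invoking the symmetry $I(u,v)=I(v,u)$ to handle $\sin^{2q(k)+v}(x/2)\cos^v(x/2)$. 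The parity hypothesis on $p$ then selects the even- or odd-indexed $f$-terms, and since the surviving $p$ is always even the sign $(-1)^{p(\cdot)}$ on those terms equals $1$; this yields~\eqref{eq.kx7oxs0} and~\eqref{eq.wnufxrh}.

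The part I expect to require the most care is the bookkeeping rather than the integration. First, one must rewrite the raw outputs of~\eqref{int1} and~\eqref{int2} into the exact binomial forms displayed; this rests on the symmetry $\binom{(u+v)/2}{u/2}=\binom{(u+v)/2}{v/2}$ and on clearing the common factor $\pi\binom{v}{v/2}2^{-v}$ that appears on both sides. Second, one must translate \emph{the even} (respectively \emph{odd}) \emph{integers lying in $[m,r]$} into the stated summation limits $\lfloor(m+1)/2\rfloor,\dots,\lfloor r/2\rfloor$ (respectively $\lfloor(m+2)/2\rfloor,\dots,\lceil r/2\rceil$), with the analogous translation for the $p$-indices in part~(2). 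Once these normalizations and index relabelings are verified, the remaining content is substitution and termwise integration justified directly by Lemma~\ref{lem.ibicayr}.
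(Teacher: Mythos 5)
Your proposal is correct and takes essentially the same approach as the paper: part (1) is exactly the paper's argument (set $t=\cos x$ in~\eqref{eq.zig6lng}, multiply by $\sin^v x$, integrate termwise over $[0,\pi]$, and let the vanishing of $J(m,v)$ in~\eqref{int2} for odd $m$ select the surviving parity class, with the stated index limits and the normalization by $\pi\binom{v}{v/2}2^{-v}$). For part (2) the paper instead rewrites~\eqref{eq.zig6lng} in the equivalent form $\sum_{k}(-1)^{q(k)}g(k)(1+t)^{q(k)}=\sum_{k}(-1)^{p(k)}f(k)t^{p(k)}$ and reapplies part (1), whereas you substitute $1+t=\cos x$ directly; these are the same computation up to the change of variable $x\mapsto\pi-x$ and the symmetry $I(u,v)=I(v,u)$, so the arguments coincide in substance.
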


\begin{proof}
Set $t=\cos x$ in~\eqref{eq.zig6lng} and multiply through by $\sin^vx$ to obtain
\begin{equation*}
\begin{split}
&\sum_{k = s}^n {2^{p(k) + v} f(k)\cos ^{2p(k) + v} \left( {\frac{x}{2}} \right)\sin ^v \left( {\frac{x}{2}} \right)} \\
&\qquad= \sum_{k = \left\lfloor {(m + 1)/2} \right\rfloor }^{\left\lfloor {r/2} \right\rfloor } {g(2k)\cos ^{q(2k)} x\sin ^v x}  + \sum_{k = \left\lfloor {(m + 2)/2} \right\rfloor }^{\left\lceil {r/2} \right\rceil } {g(2k - 1)\cos ^{q(2k - 1)} x\sin ^v x},
\end{split}
\end{equation*}
from which~\eqref{eq.yv4kyfa} and~\eqref{eq.ba9x4o9} follow after term-wise integration from $0$ to $\pi$, using Lemma~\ref{lem.ibicayr}. Identities~\eqref{eq.kx7oxs0} and~\eqref{eq.wnufxrh} are obtained from~\eqref{eq.yv4kyfa} and~\eqref{eq.ba9x4o9} since~\eqref{eq.zig6lng} can be written in the following equivalent form:
\begin{equation*}
\sum_{k = m}^r {( - 1)^{q(k)} g(k)\left( {1 + t} \right)^{q(k)} }  = \sum_{k = s}^n {( - 1)^{p(k)} f(k)t^{p(k)} } .
\end{equation*}

\end{proof}

In particular,

\begin{enumerate}

\item Suppose that, for every integer $j$, each of $q(2j)$ and $q(2j-1)$ is a sequence of non-negative integers having a definite parity but such that the parity of $q(2j)$ is different from the parity of $q(2j-1)$ for every integer $j$. 

If $q(2j)$ is an even integer for every integer $j$, then
\begin{equation}
\sum_{k = s}^n {\frac{{f(k)}}{{2^{p(k)} }}\binom{{2p(k)}}{{p(k)}}}  = \sum_{k = \left\lfloor {(m + 1)/2} \right\rfloor }^{\left\lfloor {r/2} \right\rfloor } {\frac{{g(2k)}}{{2^{q(2k)} }}\binom{{q(2k)}}{{q(2k)/2}}} ,
\end{equation}
while if $q(2j)$ is an odd integer for every integer $j$, then
\begin{equation}
\sum_{k = s}^n {\frac{{f(k)}}{{2^{p(k)} }}\binom{{2p(k)}}{{p(k)}}}  = \sum_{k = \left\lfloor {(m + 2)/2} \right\rfloor }^{\left\lceil {r/2} \right\rceil } {\frac{{g(2k - 1)}}{{2^{q(2k - 1)} }}\binom{{q(2k - 1)}}{{q(2k - 1)/2}}},
\end{equation}

\item Suppose that, for every integer $j$, each of $p(2j)$ and $p(2j-1)$ is a sequence of non-negative integers having a definite parity but such that the parity of $p(2j)$ is different from the parity of $p(2j-1)$ for every integer $j$. 

If $p(2j)$ is an even integer for every integer $j$, then
\begin{equation}
\sum_{k = m}^r {\frac{{g(k)( - 1)^{q(k)} }}{{2^{q(k)} }}\binom{{2q(k)}}{{q(k)}}}  = \sum_{k = \left\lfloor {(s + 1)/2} \right\rfloor }^{\left\lfloor {n/2} \right\rfloor } {\frac{{ f(2k)}}{{2^{p(2k)} }}\binom{{p(2k)}}{{p(2k)/2}}} ,
\end{equation}
while if $p(2j)$ is an odd integer for every integer $j$, then
\begin{equation}
\sum_{k = m}^r {\frac{{g(k)( - 1)^{q(k)} }}{{2^{q(k)} }}\binom{{2q(k)}}{{q(k)}}}  = \sum_{k = \left\lfloor {(s + 2)/2} \right\rfloor }^{\left\lceil {n/2} \right\rceil } {\frac{{( - 1)^{p(2k - 1)} f(2k - 1)}}{{2^{p(2k - 1)} }}\binom{{p(2k - 1)}}{{p(2k - 1)/2}}} .
\end{equation}

\end{enumerate}

\begin{corollary}
Let an arbitrary polynomial identity have the following form:
\begin{equation}\label{eq.a1lk6eb}
P(t,\ldots)=\sum_{k = s}^n {f(k)\left( {1 + t} \right)^k }  = \sum_{k = m}^r {g(k)\,t^k },
\end{equation}
where $m$, $n$, $r$ and $s$ are non-negative integers, $f(k)$ and $g(k)$ are sequences, and $t$ is a complex variable. Let $v$ be an arbitrary real number. Then
\begin{equation}\label{eq.ly7bawk}
\sum_{k = s}^n {\frac{{f(k)}}{{2^k }}\binom{{2k + v}}{{\left( {2k + v} \right)/2}}\binom{{k + v}}{{v/2}}^{ - 1} }  = \sum_{k = \left\lfloor {(m + 1)/2} \right\rfloor }^{\left\lfloor {r/2} \right\rfloor } {\frac{{g(2k)}}{{2^{2k} }}\binom{{2k}}{k}\binom{{\left( {2k + v} \right)/2}}{{v/2}}^{-1}} ,
\end{equation}
and
\begin{equation}
\sum_{k = m}^r {\frac{{g(k)( - 1)^k }}{{2^k }}\binom{{2k + v}}{{\left( {2k + v} \right)/2}}\binom{{k + v}}{{v/2}}^{ - 1}}  = \sum_{k = \left\lfloor {(s + 1)/2} \right\rfloor }^{\left\lfloor {n/2} \right\rfloor } {\frac{{ f(2k)}}{{2^{2k} }}\binom{{2k}}{k}\binom{{\left( {2k + v} \right)/2}}{{v/2}}^{ - 1} } .
\end{equation}
\end{corollary}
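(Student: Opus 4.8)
The plan is to read this corollary off directly from Theorem~\ref{thm.bgu7pnr} by specializing both exponent sequences to the identity map, namely $p(k)=k$ and $q(k)=k$. Under this choice the polynomial identity~\eqref{eq.a1lk6eb} is precisely~\eqref{eq.zig6lng}, so each of the two displayed conclusions of the corollary should emerge by substituting $p(k)=k$ and $q(k)=k$ into the appropriate part of Theorem~\ref{thm.bgu7pnr}. The inherited hypothesis is $\Re v>-1$, which is what restricts the ``arbitrary real number'' $v$ of the statement.

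First I would derive~\eqref{eq.ly7bawk} from Part~1 of Theorem~\ref{thm.bgu7pnr}. With $q(k)=k$ one has $q(2j)=2j$ and $q(2j-1)=2j-1$ for every integer $j$; these are respectively even and odd, so they have definite parities that differ, and the hypotheses of Part~1 hold in the branch where $q(2j)$ is even. Substituting $p(k)=k$ and $q(2k)=2k$ into~\eqref{eq.yv4kyfa} then reproduces~\eqref{eq.ly7bawk} term for term, the only simplifications being $2^{q(2k)}=2^{2k}$ and $\binom{q(2k)}{q(2k)/2}=\binom{2k}{k}$.

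The second identity follows identically from Part~2. Taking $p(k)=k$ makes $p(2j)=2j$ even and $p(2j-1)=2j-1$ odd, so Part~2 applies in its even branch, namely~\eqref{eq.kx7oxs0}. Substituting $q(k)=k$ and $p(2k)=2k$ yields the asserted identity once one observes that the sign $(-1)^{p(2k)}=(-1)^{2k}$ appearing on the right-hand side equals $1$ and therefore disappears. The argument is purely a matter of transcription, so there is no substantive obstacle; the only points requiring care are confirming that the summation limits $\lfloor(m+1)/2\rfloor$, $\lfloor r/2\rfloor$, $\lfloor(s+1)/2\rfloor$, $\lfloor n/2\rfloor$ carry over correctly and that the parity hypotheses of Theorem~\ref{thm.bgu7pnr} are met automatically by the identity exponents, after which both stated identities are immediate specializations.
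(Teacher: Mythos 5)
Your proposal is correct and matches the paper's intent exactly: the corollary is nothing more than Theorem~\ref{thm.bgu7pnr} specialized to $p(k)=k=q(k)$, where the first identity is \eqref{eq.yv4kyfa} (the even branch of Part~1, since $q(2j)=2j$ is even and $q(2j-1)=2j-1$ is odd) and the second is \eqref{eq.kx7oxs0} (the even branch of Part~2, with $(-1)^{p(2k)}=(-1)^{2k}=1$). You also correctly flag the one point the paper glosses over, namely that the ``arbitrary real number'' $v$ must inherit the restriction $\Re v>-1$ from the theorem.
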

In particular,
\begin{equation}
\sum_{k = s}^n {\frac{{f(k)}}{{2^k }}\binom{{2k}}{k}}  = \sum_{k = \left\lfloor {(m + 1)/2} \right\rfloor }^{\left\lfloor {r/2} \right\rfloor } {\frac{{g(2k)}}{{2^{2k} }}\binom{{2k}}{k}} ,
\end{equation}
and
\begin{equation}
\sum_{k = m}^r {\frac{{g(k)( - 1)^k }}{{2^k }}\binom{{2k}}{k}}  = \sum_{k = \left\lfloor {(s + 1)/2} \right\rfloor }^{\left\lfloor {n/2} \right\rfloor } {\frac{{ f(2k)}}{{2^{2k} }}\binom{{2k}}{k}} .
\end{equation}
\section{Polynomial identities}\label{polynomials}
In this section, by following the procedures outlined in Section~\ref{sec.combinat}, we derive new polynomial identities associated with the binomial theorem.

\begin{theorem}
Let $u$ and $v$ be arbitrary complex numbers such that $\Re u>-1$ and $\Re v>-1$. Let $x$ be a complex variable. If $n$ is a non-negative integer, then
\begin{equation}\label{eq.sf62i7f}
\begin{split}
&\sum_{k = 0}^n {( - 1)^{n - k} \binom{{n}}{k}\binom{{k + u + v + 1}}{{u + 1}}^{ - 1} \left( {1 - x} \right)^{n - k} }  \\&\qquad= \frac{{u + 1}}{{v + 1}}\sum_{k = 0}^n {( - 1)^k \binom{{n}}{k}\binom{{k + u + v + 1}}{{v + 1}}^{ - 1} x^{n - k}}.
\end{split}
\end{equation}

\end{theorem}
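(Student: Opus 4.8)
The plan is to realize~\eqref{eq.sf62i7f} as a direct application of Theorem~\ref{thm.a2bugv6}, with $x$ playing the role of one of the extra spectator parameters hidden in the ellipsis of the template identity~\eqref{eq.zig6lng}. The only genuinely creative step is to produce the correct underlying polynomial identity in the single integration variable $t$. To that end I would expand $(x+t)^n$ in two different ways: directly by the binomial theorem, $(x+t)^n=\sum_{k=0}^n\binom nk x^{n-k}t^k$, and then, after the regrouping $x+t=(1+t)-(1-x)$, as $(x+t)^n=\sum_{k=0}^n(-1)^{n-k}\binom nk(1-x)^{n-k}(1+t)^k$. Equating these two expansions yields
\[
\sum_{k=0}^n(-1)^{n-k}\binom nk(1-x)^{n-k}(1+t)^k=\sum_{k=0}^n\binom nk x^{n-k}t^k,
\]
which is precisely of the form~\eqref{eq.zig6lng} with $s=m=0$, $r=n$, $p(k)=q(k)=k$, $f(k)=(-1)^{n-k}\binom nk(1-x)^{n-k}$ and $g(k)=\binom nk x^{n-k}$.

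Next I would simply feed these data into~\eqref{eq.rxkjh8x}. The transform used in the proof of Theorem~\ref{thm.a2bugv6}, namely $t\mapsto -t$ followed by multiplication by $t^u(1-t)^v$ and integration over $[0,1]$, acts only on the variable $t$, so the factors $(1-x)^{n-k}$ and $x^{n-k}$ pass through as constants. Substituting $p(k)=q(k)=k$ together with the above $f$ and $g$ into~\eqref{eq.rxkjh8x} produces left-hand side $\sum_{k=0}^n(-1)^{n-k}\binom nk(1-x)^{n-k}\binom{k+u+v+1}{u+1}^{-1}$ and right-hand side $\frac{u+1}{v+1}\sum_{k=0}^n(-1)^k\binom nk x^{n-k}\binom{k+u+v+1}{v+1}^{-1}$, which is exactly~\eqref{eq.sf62i7f}. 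The hypotheses $\Re u>-1$ and $\Re v>-1$ required for the Beta integral~\eqref{eq.vmrnwwf} to converge carry over verbatim.

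The main obstacle is not computational but conceptual: recognizing that the symmetric expansion of $(x+t)^n$ via $(1+t)-(1-x)$ is the right base identity, and confirming that $x$ truly behaves as an inert constant throughout the $t$-integration so that Theorem~\ref{thm.a2bugv6} applies verbatim. If one instead prefers a self-contained derivation bypassing the theorem, the only nonroutine bookkeeping is the simplification $\frac{1}{k+v+1}\binom{k+u+v+1}{k+v+1}^{-1}=\frac{1}{u+1}\binom{k+u+v+1}{u+1}^{-1}$ arising after integrating the left-hand side, which follows from the symmetry $\binom Nj=\binom N{N-j}$ and one step of the binomial recurrence; the analogous reduction on the right-hand side supplies the $\frac{u+1}{v+1}$ prefactor.
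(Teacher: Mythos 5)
Your proposal is correct and takes essentially the same route as the paper: your two-way expansion of $(x+t)^n$ is exactly the paper's ``variation on the binomial theorem'' \eqref{eq.ltr1okl}, and you then apply Theorem~\ref{thm.a2bugv6} via \eqref{eq.rxkjh8x} with the identical data $f(k)=(-1)^{n-k}\binom{n}{k}(1-x)^{n-k}$, $g(k)=\binom{n}{k}x^{n-k}$, $p(k)=q(k)=k$, $s=m=0$, $r=n$, treating $x$ as an inert parameter just as the paper does. The only difference is cosmetic: the paper states \eqref{eq.ltr1okl} without derivation, while you motivate it through the regrouping $x+t=(1+t)-(1-x)$.
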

In particular,
\begin{equation}
\sum_{k = 0}^n {( - 1)^{n - k} \dfrac{{\binom{{n}}{k}}}{{k + 1}}\left( {1 - x} \right)^{n - k} }  = \sum_{k = 0}^n {( - 1)^k \dfrac{{\binom{{n}}{k}}}{{k + 1}}x^{n - k} } .
\end{equation}

\begin{proof}
Consider the following variation on the binomial theorem:
\begin{equation}\label{eq.ltr1okl}
\sum_{k = 0}^n {( - 1)^{n - k} \binom{{n}}{k}\left( {1 + t} \right)^k \left( {1 - x} \right)^{n - k} }  = \sum_{k = 0}^n {\binom{{n}}{k}t^k x^{n - k} }.
\end{equation}
Use~\eqref{eq.rxkjh8x} with 
\begin{equation*}
f(k) = ( - 1)^{n - k} \binom{n}{k}(1 - x)^{n - k}, \quad g(k) = \binom{n}{k}x^{n - k},\quad s=0=m,\quad r=n,
\end{equation*}
to obtain~\eqref{eq.sf62i7f}.

\end{proof}

\begin{theorem}
If $n$ is a non-negative integer, $v$ is a real number and $x$ is a complex variable, then
\begin{equation}\label{eq.qd43spp}
\begin{split}
&\sum_{k = 0}^n {( - 1)^{n - k} \binom{{n}}{k}2^{ - k} \binom{{2k + v}}{{\left( {2k + v} \right)/2}}\binom{{k + v}}{{v/2}}^{ - 1} \left( {1 - x} \right)^{n - k} }\\ 
&\qquad = \sum_{k = 0}^{\left\lfloor {n/2} \right\rfloor } {\binom{{n}}{{2k}}2^{ - 2k} \binom{{2k}}{k}\binom{{\left( {2k + v} \right)/2}}{k}^{ - 1} x^{n - 2k} } .
\end{split}
\end{equation}

\end{theorem}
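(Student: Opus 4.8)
The plan is to recognize~\eqref{eq.qd43spp} as a direct instance of the machinery developed in Section~\ref{sec.combinat}, applied to the very polynomial identity~\eqref{eq.ltr1okl} already used in the preceding theorem. Concretely, I would start from
\begin{equation*}
\sum_{k = 0}^n {( - 1)^{n - k} \binom{{n}}{k}\left( {1 + t} \right)^k \left( {1 - x} \right)^{n - k} }  = \sum_{k = 0}^n {\binom{{n}}{k}t^k x^{n - k} },
\end{equation*}
which has the shape~\eqref{eq.a1lk6eb} with $p(k)=q(k)=k$, and read off the same sequences as before, namely
\begin{equation*}
f(k) = ( - 1)^{n - k} \binom{n}{k}(1 - x)^{n - k}, \quad g(k) = \binom{n}{k}x^{n - k},\quad s=m=0,\quad r=n.
\end{equation*}

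Next I would invoke the Corollary's identity~\eqref{eq.ly7bawk} with this data. Its left-hand side immediately reproduces the left-hand side of~\eqref{eq.qd43spp}, since $f(k)/2^k$ carries exactly the factor $(-1)^{n-k}\binom{n}{k}2^{-k}(1-x)^{n-k}$. For the right-hand side, the summation bounds $\lfloor(m+1)/2\rfloor=0$ and $\lfloor r/2\rfloor=\lfloor n/2\rfloor$ reduce the sum to $0\le k\le\lfloor n/2\rfloor$, and substituting $g(2k)=\binom{n}{2k}x^{n-2k}$ yields
\begin{equation*}
\sum_{k = 0}^{\left\lfloor {n/2} \right\rfloor } {\binom{{n}}{{2k}}2^{ - 2k} \binom{{2k}}{k}\binom{{\left( {2k + v} \right)/2}}{{v/2}}^{ - 1} x^{n - 2k} }.
\end{equation*}

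The only remaining step is to match this against the right-hand side stated in~\eqref{eq.qd43spp}, which displays $\binom{(2k+v)/2}{k}^{-1}$ in place of $\binom{(2k+v)/2}{v/2}^{-1}$. These agree by the symmetry $\binom{u}{w}=\binom{u}{u-w}$, which is immediate from the definition~\eqref{eq.o0ohqay}: taking $u=(2k+v)/2$ and $w=v/2$ gives $u-w=k$, so $\binom{(2k+v)/2}{v/2}=\binom{(2k+v)/2}{k}$. I do not expect any real obstacle here: the argument is pure bookkeeping once the correct corollary is identified, and the single point demanding care is verifying that the floor-function summation limits collapse as claimed and that the symmetry relation is applied to the right argument of the binomial coefficient.
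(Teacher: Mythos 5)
Your proof is correct and takes essentially the same route as the paper's own proof: both apply~\eqref{eq.ly7bawk} to the polynomial identity~\eqref{eq.ltr1okl} with exactly the same choices $f(k) = (-1)^{n-k}\binom{n}{k}(1-x)^{n-k}$, $g(k)=\binom{n}{k}x^{n-k}$, $s=m=0$, $r=n$. Your only addition is the explicit check that $\binom{(2k+v)/2}{v/2}=\binom{(2k+v)/2}{k}$ via the symmetry of generalized binomial coefficients, a bookkeeping step the paper leaves implicit.
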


\begin{proof}
On account of~\eqref{eq.a1lk6eb} and with~\eqref{eq.ltr1okl} in mind, use~\eqref{eq.ly7bawk} with 
\begin{equation*}
f(k) = ( - 1)^{n - k} \binom{n}{k}(1 - x)^{n - k}, \quad g(k) = \binom{n}{k}x^{n - k},\quad s=0=m,\quad r=n,
\end{equation*}
to obtain~\eqref{eq.qd43spp}.

\end{proof}

\begin{corollary}
If $n$ is a non-negative integer and $x$ is a complex variable, then
\begin{gather}
\sum_{k = 0}^n {( - 1)^{n - k} \binom{{n}}{k}\frac{{2^k }}{{k + 1}}\left( {1 - x} \right)^{n - k}  = } \sum_{k = 0}^{\left\lfloor {n/2} \right\rfloor } {\frac{{\binom{{n}}{{2k}}}}{{2k + 1}}x^{n - 2k} },\label{eq.hmx1w7h} \\
\sum_{k = 0}^n {( - 1)^{n - k} \frac{{\binom{{n}}{k}}}{{k + 2}}\,2^{ - k} \binom{{2\left( {k + 1} \right)}}{{k + 1}}\left( {1 - x} \right)^{n - k} }  = \sum_{k = 0}^{\left\lfloor {n/2} \right\rfloor } {\frac{{\binom{{n}}{{2k}}}}{{k + 1}}2^{ - 2k} \binom{{2k}}{k}x^{n - 2k} }\label{eq.bv1inky} .
\end{gather}
\end{corollary}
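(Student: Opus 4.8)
The plan is to read off both identities as special cases of the theorem's identity~\eqref{eq.qd43spp}, which already holds for every real~$v$. Concretely, I would set $v=2$ to obtain~\eqref{eq.bv1inky} and $v=1$ to obtain~\eqref{eq.hmx1w7h}; no further integration or analytic input is needed, so the whole task reduces to simplifying the generalized binomial coefficients that survive after each substitution.

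I would dispose of~\eqref{eq.bv1inky} first, since at $v=2$ every half-integer argument disappears: on the left $\binom{2k+v}{(2k+v)/2}=\binom{2k+2}{k+1}=\binom{2(k+1)}{k+1}$ and $\binom{k+v}{v/2}=\binom{k+2}{1}=k+2$, while on the right $\binom{(2k+v)/2}{k}=\binom{k+1}{k}=k+1$. Substituting these three evaluations into~\eqref{eq.qd43spp} turns it termwise into~\eqref{eq.bv1inky}.

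The substantive case is~\eqref{eq.hmx1w7h}, obtained at $v=1$, where the left summand carries the genuinely half-integer factors $\binom{2k+1}{(2k+1)/2}$ and $\binom{k+1}{1/2}^{-1}$ and the right summand carries $\binom{(2k+1)/2}{k}^{-1}=\binom{k+1/2}{k}^{-1}$. Here I would invoke the Gamma-function relations recorded in~\eqref{eq.muz1im8}--\eqref{eq.mv0q30s} (equivalently, the duplication formula for $\Gamma(k+\tfrac12)$ together with the definition~\eqref{eq.o0ohqay}). On the right, $\binom{k+1/2}{k}=(2k+1)\,2^{-2k}\binom{2k}{k}$, so the product $2^{-2k}\binom{2k}{k}\binom{(2k+1)/2}{k}^{-1}$ collapses to $1/(2k+1)$, matching the right-hand side of~\eqref{eq.hmx1w7h}. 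On the left, evaluating the two half-integer factors by the same relations and cancelling the resulting powers of $2$ and factors of $\pi$ reduces $2^{-k}\binom{2k+1}{(2k+1)/2}\binom{k+1}{1/2}^{-1}$ to the elementary value $2^{k}/(k+1)$, which is exactly the coefficient appearing in~\eqref{eq.hmx1w7h}.

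I expect the only real obstacle to be the bookkeeping in this last step: one must check that all the $\pi$'s and powers of $2$ introduced by the half-integer Gamma values in the $v=1$ case cancel precisely, leaving the clean factors $2^{k}/(k+1)$ and $1/(2k+1)$. There is no conceptual difficulty beyond careful and correct application of~\eqref{eq.muz1im8}--\eqref{eq.mv0q30s}.
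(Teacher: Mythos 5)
Your proposal is correct and follows essentially the same route as the paper: both specialize~\eqref{eq.qd43spp} at $v=1$ and $v=2$ and then reduce the surviving half-integer binomial coefficients via the Gamma-function relations~\eqref{eq.muz1im8}--\eqref{eq.mv0q30s}, the only (cosmetic) difference being which intermediate binomial identities are written down during the $v=1$ bookkeeping. Your simplifications ($\binom{k+1/2}{k}^{-1}$ collapsing $2^{-2k}\binom{2k}{k}$ to $1/(2k+1)$, and $2^{-k}\binom{2k+1}{(2k+1)/2}\binom{k+1}{1/2}^{-1}=2^{k}/(k+1)$) check out.
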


\begin{proof}
Identity~\eqref{eq.l5xib79} on page~\pageref{eq.l5xib79} and identities~\eqref{eq.hmx1w7h} and~\eqref{eq.bv1inky} correspond to the evaluation of~\eqref{eq.qd43spp} at $v=0$, $v=1$ and $v=2$, respectively.

In deriving~\eqref{eq.hmx1w7h}, we used~\eqref{eq.muz1im8}--\eqref{eq.mv0q30s} to obtain
\begin{equation*}
\binom{{2k + 1}}{{k + 1/2}} =\frac{{2^{4k + 4} }}{{\pi \left( {k + 1} \right)}}\binom{{2\left( {k + 1} \right)}}{{k + 1}}^{ - 2} \binom{{2k + 1}}{k},
\end{equation*}

\begin{equation*}
\binom{{k + 1}}{{1/2}} = \frac{{2^{2k + 3} }}{\pi }\binom{{2\left( {k + 1} \right)}}{{k + 1}}^{ - 1} ,
\end{equation*}
and
\begin{equation*}
\binom{{2k + 1}}{k} = \frac12\binom{{2\left( {k + 1} \right)}}{{k + 1}}.
\end{equation*}

\end{proof}

\begin{theorem}
Let $u$ and $v$ be complex numbers such that $\Re u>-1$ and $\Re v>-1$. If $n$ is a non-negative integer, then
\begin{equation}\label{eq.tnrm6l2}
\begin{split}
&\binom{{v}}{{v/2}}\sum_{k = 0}^n {( - 1)^{n - k} \binom{{n}}{k}2^{ - 2k} \binom{{2k + u}}{{\left( {2k + u} \right)/2}}\binom{{\left( {2k + u + v} \right)/2}}{{v/2}}^{-1}\left( {1 - x} \right)^{n - k} } \\
&\qquad = \binom{{u}}{{u/2}}\sum_{k = 0}^n {( - 1)^k \binom{{n}}{k}2^{ - 2k} \binom{{2k + v}}{{\left( {2k + v} \right)/2}}\binom{{\left( {2k + u + v} \right)/2}}{{u/2}}^{-1}x^{n - k} } .
\end{split}
\end{equation}

\end{theorem}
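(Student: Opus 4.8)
The plan is to recognize \eqref{eq.tnrm6l2} as nothing more than the master identity \eqref{eq.cnwt5zb} specialized to the binomial-theorem variant \eqref{eq.ltr1okl}. Recall that \eqref{eq.ltr1okl} reads
\[
\sum_{k=0}^n (-1)^{n-k}\binom{n}{k}(1+t)^k(1-x)^{n-k} = \sum_{k=0}^n \binom{n}{k}t^k x^{n-k},
\]
which is precisely of the template form \eqref{eq.zig6lng} with $s=m=0$, $r=n$, $p(k)=q(k)=k$, and sequences
\[
f(k)=(-1)^{n-k}\binom{n}{k}(1-x)^{n-k},\qquad g(k)=\binom{n}{k}x^{n-k}.
\]
The parameters $u$ and $v$ in the statement play exactly the role of $u$ and $v$ in \eqref{eq.cnwt5zb}, while $x$ is carried along as a free complex parameter buried inside $f$ and $g$. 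This is the same reduction strategy already used to prove \eqref{eq.qd43spp}; only the master identity invoked is different.

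First I would substitute these choices into the left-hand side of \eqref{eq.cnwt5zb}. Since $p(k)=k$, the factor $2^{-2p(k)}$ becomes $2^{-2k}$ and the central-binomial blocks specialize to exactly the summand on the left of \eqref{eq.tnrm6l2}; pulling the $k$-independent factor $\binom{v}{v/2}$ outside the summation reproduces that side verbatim. Next I would treat the right-hand side of \eqref{eq.cnwt5zb} identically: here $(-1)^{q(k)}g(k)=(-1)^k\binom{n}{k}x^{n-k}$ and $2^{-2q(k)}=2^{-2k}$, so after factoring $\binom{u}{u/2}$ out of the sum one obtains exactly the right-hand side of \eqref{eq.tnrm6l2}. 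Because \eqref{eq.cnwt5zb} asserts the equality of these two expressions, \eqref{eq.tnrm6l2} follows at once.

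There is essentially no analytic obstacle: the hypotheses $\Re u>-1$ and $\Re v>-1$ are inherited verbatim from \eqref{eq.cnwt5zb} (ultimately from Lemma~\ref{lem.ibicayr}, which underlies it), and $x$ never enters the integration. The only point that merits a second look is the sign bookkeeping, namely verifying that the $(-1)^{n-k}$ already sitting inside $f(k)$ and the $(-1)^{q(k)}=(-1)^k$ supplied explicitly by \eqref{eq.cnwt5zb} land on opposite sides, yielding $(-1)^{n-k}$ on the left of \eqref{eq.tnrm6l2} and $(-1)^k$ on the right. I would therefore present the argument in the same telegraphic style as the proof of \eqref{eq.qd43spp}: invoke \eqref{eq.cnwt5zb} with the displayed $f$, $g$, $p(k)=q(k)=k$ and index bounds $s=m=0$, $r=n$, and read off \eqref{eq.tnrm6l2}.
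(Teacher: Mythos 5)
Your proposal is correct and is essentially identical to the paper's own proof, which likewise applies \eqref{eq.cnwt5zb} to the binomial-theorem variant \eqref{eq.ltr1okl} with $f(k)=(-1)^{n-k}\binom{n}{k}(1-x)^{n-k}$, $g(k)=\binom{n}{k}x^{n-k}$, $p(k)=q(k)=k$, $s=m=0$, and $r=n$. Your sign bookkeeping and the factoring of $\binom{v}{v/2}$ and $\binom{u}{u/2}$ out of the sums are exactly what the paper's (more telegraphic) proof leaves to the reader.
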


\begin{proof}
With~\eqref{eq.ltr1okl} in mind, use
\begin{equation*}
f(k) = ( - 1)^{n - k} \binom{n}{k}(1 - x)^{n - k}, \quad g(k) = \binom{n}{k}x^{n - k},\quad s=0=m,\quad r=n,
\end{equation*}
and $p(k)=k=q(k)$ in~\eqref{eq.cnwt5zb}.

\end{proof}

\begin{corollary}
If $n$ is a non-negative integer, then
\begin{gather}
\sum_{k = 0}^n {( - 1)^{n - k} \binom{{2k}}{k}2^{ - 2k} \binom{{n}}{k}\left( {1 - x} \right)^{n - k} }  = \sum_{k = 0}^n {( - 1)^k \binom{{2k}}{k}2^{ - 2k} \binom{{n}}{k}x^{n - k} }, \\
\sum_{k = 0}^n {( - 1)^{n - k} 2^{ - 2k} \binom{{n}}{k}C_{k + 1} \left( {1 - x} \right)^{n - k} }  = \sum_{k = 0}^n {( - 1)^k 2^{ - 2k} \binom{{n}}{k}C_{k + 1} x^{n - k} } .
\end{gather}
\end{corollary}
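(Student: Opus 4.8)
The plan is to obtain both identities as direct specializations of the polynomial identity~\eqref{eq.tnrm6l2}, choosing the free parameters $u$ and $v$ so that the generalized binomial coefficients collapse onto the central binomial coefficients and Catalan numbers appearing in the statement. Since~\eqref{eq.tnrm6l2} is already established, no genuine analytic content remains; the entire task reduces to picking the right values of $u$ and $v$ and carrying out a routine reduction.

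For the first identity I would set $u=v=0$ in~\eqref{eq.tnrm6l2}. Then $\binom{v}{v/2}=\binom{u}{u/2}=\binom{0}{0}=1$, while $(2k+u)/2=(2k+v)/2=k$, so that $\binom{2k+u}{(2k+u)/2}=\binom{2k+v}{(2k+v)/2}=\binom{2k}{k}$. Moreover $(2k+u+v)/2=k$, whence both correction factors reduce to $\binom{k}{0}^{-1}=1$. What remains on each side is precisely the asserted sum, and the first identity follows at once.

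For the second identity I would instead set $u=v=2$. Now $\binom{v}{v/2}=\binom{u}{u/2}=\binom{2}{1}=2$, a common prefactor that cancels between the two sides. With $u=v=2$ one has $(2k+u)/2=(2k+v)/2=k+1$, so $\binom{2k+u}{(2k+u)/2}=\binom{2k+v}{(2k+v)/2}=\binom{2(k+1)}{k+1}$; and $(2k+u+v)/2=k+2$, so each correction factor becomes $\binom{k+2}{1}^{-1}=\tfrac{1}{k+2}$. The one simplification requiring attention is the recognition, straight from the definition of the Catalan number, that $\tfrac{1}{k+2}\binom{2(k+1)}{k+1}=C_{k+1}$; this converts the coefficient on each side to $2^{-2k}C_{k+1}$, and cancelling the common factor $2$ yields the second identity.

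Because both identities arise by substitution into an already-proved polynomial identity, I do not anticipate any real obstacle: the only steps are the reduction of the generalized binomial coefficients at the chosen parameter values and the bookkeeping identity $\binom{2(k+1)}{k+1}=(k+2)\,C_{k+1}$. The sole point where care is needed is selecting the correct parameters, namely $u=v=0$ for the first relation and $u=v=2$ for the second.
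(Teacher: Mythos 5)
Your proposal is correct and coincides exactly with the paper's own proof, which likewise evaluates~\eqref{eq.tnrm6l2} at $u=0=v$ and at $u=2=v$; your verification of the reductions (including the identity $\binom{2(k+1)}{k+1}=(k+2)\,C_{k+1}$ and the cancellation of the common factor $\binom{2}{1}=2$) is accurate.
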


\begin{proof}
Evaluate~\eqref{eq.tnrm6l2} at $u=0=v$ and at $u=2=v$.
\end{proof}

\begin{theorem}
if $n$ is a non-negative integer, $v$ is a real number and $x$ is a complex variable, then
\begin{equation}\label{eq.p9vcynz}
\begin{split}
&\sum_{k = 0}^n {\binom{{n}}{k}2^{ - k} \binom{{2k + v}}{{\left( {2k + v} \right)/2}}\binom{{k + v}}{{v/2}}^{ - 1} \left( {1 - x} \right)^k x^{n - k} }\\ 
&\qquad = \sum_{k = 0}^{\left\lfloor {n/2} \right\rfloor } {\binom{{n}}{{2k}}2^{ - 2k} \binom{{2k}}{k}\binom{{\left( {2k + v} \right)/2}}{k}^{ - 1} (1 - x)^{2k} } .
\end{split}
\end{equation}

\end{theorem}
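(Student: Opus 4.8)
The plan is to recognize~\eqref{eq.p9vcynz} as the image, under the Corollary yielding~\eqref{eq.ly7bawk}, of a suitable polynomial identity of the form~\eqref{eq.a1lk6eb}. Comparing~\eqref{eq.p9vcynz} with the already-established~\eqref{eq.qd43spp}, the binomial-coefficient structure on both sides is identical; only the polynomial factors differ, with $(1-x)^k x^{n-k}$ now appearing on the left and $(1-x)^{2k}$ on the right. This dictates the choice
\begin{equation*}
f(k) = \binom{n}{k}(1-x)^k x^{n-k}, \qquad g(k) = \binom{n}{k}(1-x)^k, \qquad s = 0 = m, \quad r = n,
\end{equation*}
with $t$ (not $x$) regarded as the free complex variable of~\eqref{eq.a1lk6eb} and $x$, $n$ treated as parameters, exactly as in the derivation of~\eqref{eq.qd43spp}.

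First I would establish the underlying polynomial identity in the form~\eqref{eq.a1lk6eb} corresponding to this data, namely
\begin{equation*}
\sum_{k=0}^n \binom{n}{k}(1-x)^k x^{n-k}(1+t)^k = \sum_{k=0}^n \binom{n}{k}(1-x)^k t^k.
\end{equation*}
Both sides are evaluated by the binomial theorem: grouping the factor $(1-x)(1+t)$ on the left gives $\bigl((1-x)(1+t)+x\bigr)^n = \bigl(1+(1-x)t\bigr)^n$, while the right side is directly $\bigl(1+(1-x)t\bigr)^n$. Hence the two sides agree, and the identity has precisely the shape required by~\eqref{eq.a1lk6eb}.

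Next I would substitute this $f$ and $g$ into~\eqref{eq.ly7bawk}. The left-hand side reproduces verbatim the left-hand side of~\eqref{eq.p9vcynz}. On the right, with $m=0$ and $r=n$ the summation index runs from $\lfloor 1/2\rfloor = 0$ to $\lfloor n/2\rfloor$, and $g(2k) = \binom{n}{2k}(1-x)^{2k}$, producing
\begin{equation*}
\sum_{k=0}^{\lfloor n/2\rfloor} \binom{n}{2k}2^{-2k}\binom{2k}{k}\binom{(2k+v)/2}{v/2}^{-1}(1-x)^{2k}.
\end{equation*}

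The final step is the one routine simplification: matching the factor $\binom{(2k+v)/2}{v/2}^{-1}$ produced by~\eqref{eq.ly7bawk} with the factor $\binom{(2k+v)/2}{k}^{-1}$ displayed in~\eqref{eq.p9vcynz}. These agree by the reflection symmetry $\binom{a}{b}=\binom{a}{a-b}$ applied to the definition~\eqref{eq.o0ohqay} with $a=(2k+v)/2$ and $b=v/2$, since then $a-b=k$. With this the two sides of~\eqref{eq.p9vcynz} coincide. I do not anticipate a genuine obstacle; the only point requiring care is the bookkeeping confirming that the polynomial identity fits the template~\eqref{eq.a1lk6eb} with $t$, rather than $x$, in the role of the free variable.
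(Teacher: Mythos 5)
Your proposal is correct and follows essentially the same route as the paper: it constructs the same variation on the binomial theorem (the paper's identity with $y$ in place of your $t$), applies~\eqref{eq.ly7bawk} with exactly the same choices $f(k) = \binom{n}{k}(1-x)^k x^{n-k}$, $g(k)=\binom{n}{k}(1-x)^k$, $s=0=m$, $r=n$, and your explicit verification of the polynomial identity and of the symmetry $\binom{(2k+v)/2}{v/2}=\binom{(2k+v)/2}{k}$ only fills in details the paper leaves implicit.
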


\begin{proof}
Consider another variation on the binomial theorem:
\begin{equation}\label{eq.n47svms}
\sum_{k = 0}^n {\binom{{n}}{k}\left( {1 - x} \right)^k \left( {1 + y} \right)^k x^{n - k} }=\sum_{k = 0}^n {\binom{{n}}{k}y^k \left( {1 - x} \right)^k }.
\end{equation}
This identity has the form of~\eqref{eq.a1lk6eb}. Use~\eqref{eq.ly7bawk} with 
\begin{equation*}
f(k) = \binom{n}{k}(1 - x)^k x^{n - k} ,\quad g(k) = \binom{n}{k}(1 - x)^k ,\quad s = 0 = m,\quad r = n,
\end{equation*}
to obtain~\eqref{eq.p9vcynz}.

\end{proof}

\begin{corollary}
If $n$ is a non-negative integer and $x$ is a complex variable, then
\begin{gather}
\sum_{k = 0}^n {\binom{{n}}{k}2^{ - k} \binom{{2k}}{k}\left( {1 - x} \right)^k x^{n - k} }  = \sum_{k = 0}^{\left\lfloor {n/2} \right\rfloor } {\binom{{n}}{{2k}}2^{ - 2k} \binom{{2k}}{k}(1 - x)^{2k} },\label{eq.yvskdge}\\ 
\sum_{k = 0}^n {\binom{{n}}{k}\frac{{2^k }}{{k + 1}}\left( {1 - x} \right)^k x^{n - k}  = } \sum_{k = 0}^{\left\lfloor {n/2} \right\rfloor } {\frac{{\binom{{n}}{{2k}}}}{{2k + 1}}(1 - x)^{2k} },\label{eq.ni2yglt} \\
\sum_{k = 0}^n {\binom{{n}}{k}2^{ - k} C_{k + 1} \left( {1 - x} \right)^k x^{n - k} }  = \sum_{k = 0}^{\left\lfloor {n/2} \right\rfloor } {\binom{{n}}{{2k}}2^{ - 2k} C_k \left( {1 - x} \right)^{2k} } \label{eq.fe8atkt} .
\end{gather}
\end{corollary}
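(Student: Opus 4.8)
The plan is to read off the three identities as the specializations of~\eqref{eq.p9vcynz} at $v=0$, $v=1$, and $v=2$. The key observation is that the string of generalized binomial coefficients multiplying the monomials on each side of~\eqref{eq.p9vcynz} is \emph{identical} to the corresponding string in~\eqref{eq.qd43spp}; only the polynomial factors in $x$ differ. Consequently the coefficient reductions needed here are verbatim the same as those already carried out in passing from~\eqref{eq.qd43spp} to~\eqref{eq.l5xib79}, \eqref{eq.hmx1w7h}, and~\eqref{eq.bv1inky}, so the entire task reduces to recording those three evaluations.

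I would first dispose of the two easy cases. For $v=0$ the factors $\binom{k+v}{v/2}^{-1}=\binom{k}{0}^{-1}$ and $\binom{(2k+v)/2}{k}^{-1}=\binom{k}{k}^{-1}$ are both equal to $1$, while $\binom{2k+v}{(2k+v)/2}=\binom{2k}{k}$, so~\eqref{eq.p9vcynz} collapses at once to~\eqref{eq.yvskdge}. For $v=2$ I would use $\binom{k+2}{1}=k+2$ and $\binom{k+1}{k}=k+1$: the left coefficient $2^{-k}\binom{2k+2}{k+1}/(k+2)$ equals $2^{-k}C_{k+1}$ and the right coefficient $2^{-2k}\binom{2k}{k}/(k+1)$ equals $2^{-2k}C_k$ by the definition $C_j=\binom{2j}{j}/(j+1)$, giving~\eqref{eq.fe8atkt}.

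The case $v=1$ is the only place where real work is required, and it is the main obstacle: here the coefficients carry genuine half-integer generalized binomials, namely $\binom{2k+1}{(2k+1)/2}$ and $\binom{k+1}{1/2}$ on the left and $\binom{(2k+1)/2}{k}=\binom{k+1/2}{k}$ on the right. I would reduce these to central binomial coefficients by means of the relations~\eqref{eq.muz1im8}--\eqref{eq.mv0q30s}. On the right, $\binom{k+1/2}{k}=(2k+1)\,2^{-2k}\binom{2k}{k}$ immediately turns the coefficient $2^{-2k}\binom{2k}{k}\binom{k+1/2}{k}^{-1}$ into $1/(2k+1)$. On the left I would invoke the very same three half-integer evaluations already tabulated in the proof of~\eqref{eq.hmx1w7h} — the closed forms for $\binom{2k+1}{k+1/2}$ and $\binom{k+1}{1/2}$, together with $\binom{2k+1}{k}=\tfrac12\binom{2k+2}{k+1}$ — since the half-integer coefficients arising here are identical to those encountered there. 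After the powers of $2$ and of $\pi$ cancel, the left coefficient $2^{-k}\binom{2k+1}{(2k+1)/2}\binom{k+1}{1/2}^{-1}$ reduces to $2^{k}/(k+1)$, which yields~\eqref{eq.ni2yglt}. Since every half-integer identity in question was already secured through the Gamma-function relations preceding~\eqref{eq.muz1im8}, no genuinely new computation is needed beyond this bookkeeping.
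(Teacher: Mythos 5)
Your proposal is correct and follows exactly the paper's route: the paper likewise obtains \eqref{eq.yvskdge}, \eqref{eq.ni2yglt}, and \eqref{eq.fe8atkt} by evaluating \eqref{eq.p9vcynz} at $v=0$, $v=1$, and $v=2$, respectively. Your additional verification of the half-integer binomial reductions for $v=1$ (via \eqref{eq.muz1im8}--\eqref{eq.mv0q30s}) merely makes explicit the bookkeeping that the paper already carried out in its proof of \eqref{eq.hmx1w7h}.
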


\begin{proof}
Identities~\eqref{eq.yvskdge},~\eqref{eq.ni2yglt} and~\eqref{eq.fe8atkt} correspond to the evaluation of~\eqref{eq.p9vcynz} at $v=0$, $v=1$ and $v=2$, respectively.

\end{proof}

\begin{remark}
The reader is invited to employ the procedures established in Theorems~\ref{thm.a2bugv6}--\ref{thm.bgu7pnr} to discover more polynomial identities associated with~\eqref{eq.n47svms}.
\end{remark}

\section{More combinatorial identities}\label{combinatorial}

\subsection{Identities from the binomial theorem}

\begin{theorem}
Let $u$ and $v$ be complex numbers such that $\Re u>-1$ and $\Re v>-1$. if $n$ is a non-negative integer, then
\begin{equation}
\sum_{k = 0}^n {( - 1)^k \binom{{n}}{k}\binom{{k + u + v + 1}}{{u + 1}}^{ - 1} }  = \frac{{u + 1}}{{v + 1}}\,\binom{{n + u + v + 1}}{{v + 1}}^{ - 1}.
\end{equation}
\end{theorem}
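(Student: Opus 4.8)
The plan is to apply Theorem~\ref{thm.a2bugv6} to a suitably chosen polynomial identity of the form~\eqref{eq.zig6lng}. Comparing the left-hand side of the target identity with the left-hand side of~\eqref{eq.rxkjh8x}, I would set $f(k)=(-1)^k\binom{n}{k}$ and $p(k)=k$ with $s=0$. This forces the underlying polynomial identity to read $\sum_{k=0}^n(-1)^k\binom{n}{k}(1+t)^k=\sum_{k=m}^r g(k)\,t^{q(k)}$, and the whole problem reduces to identifying its right-hand side.

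First I would evaluate that right-hand side directly from the binomial theorem. Substituting $y=-(1+t)$ into $\sum_{k=0}^n\binom{n}{k}y^k=(1+y)^n$ yields $\sum_{k=0}^n(-1)^k\binom{n}{k}(1+t)^k=(-t)^n=(-1)^n t^n$. Thus the right-hand side collapses to a single monomial, and I would take the index range on the right to consist of just one term, namely $g=(-1)^n$ with $q=n$.

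Next I would invoke~\eqref{eq.rxkjh8x} with these choices. Its left-hand side becomes exactly $\sum_{k=0}^n(-1)^k\binom{n}{k}\binom{k+u+v+1}{u+1}^{-1}$, while its right-hand side is $\frac{u+1}{v+1}\,(-1)^{q}g\,\binom{q+u+v+1}{v+1}^{-1}$. Since $(-1)^{q}g=(-1)^n(-1)^n=1$ and $q=n$, the right-hand side simplifies to $\frac{u+1}{v+1}\,\binom{n+u+v+1}{v+1}^{-1}$, which is precisely the claimed identity.

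There is essentially no obstacle here: the only point requiring any care is recognizing that the correct polynomial identity has a single-monomial right-hand side, after which Theorem~\ref{thm.a2bugv6} does all the work. The hypotheses $\Re u>-1$ and $\Re v>-1$ are exactly those demanded by~\eqref{eq.rxkjh8x}, so no additional convergence or analyticity checks are needed.
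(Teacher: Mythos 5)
Your proof is correct and in substance identical to the paper's: the paper obtains the result by setting $x=0$ in the polynomial identity~\eqref{eq.sf62i7f}, which was itself derived by applying Theorem~\ref{thm.a2bugv6} to the binomial-theorem identity~\eqref{eq.ltr1okl}, while you specialize the binomial theorem first (obtaining $\sum_{k=0}^n(-1)^k\binom{n}{k}(1+t)^k=(-1)^n t^n$) and then apply Theorem~\ref{thm.a2bugv6}. The two arguments are the same computation performed in the opposite order, so no further comparison is needed.
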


\begin{proof}
Set $x=0$ in~\eqref{eq.sf62i7f}.
\end{proof}

\begin{theorem}
If $n$ is an integer and $v$ is a real number, then
\begin{equation}\label{eq.ilndvr6}
\begin{split}
&\sum_{k = 0}^n {( - 1)^{k} \binom{{n}}{k}2^{n - 2k} \binom{{2k + v}}{{\left( {2k + v} \right)/2}}\binom{{k + v}}{{v/2}}^{ - 1} }= \sum_{k = 0}^{\left\lfloor {n/2} \right\rfloor } {\binom{{n}}{{2k}}2^{ - 2k} \binom{{2k}}{k}\binom{{\left( {2k + v} \right)/2}}{k}^{ - 1} } ,
\end{split}
\end{equation}
and
\begin{equation}\label{eq.alpigwp}
\sum_{k = 0}^n {\binom{{n}}{k}2^{ - k} \binom{{2k + v}}{{\left( {2k + v} \right)/2}}\binom{{k + v}}{{v/2}}^{ - 1} }=\sum_{k = 0}^{\left\lfloor {n/2} \right\rfloor } {\binom{{n}}{{2k}}2^{n - 4k} \binom{{2k}}{k}\binom{{\left( {2k + v} \right)/2}}{k}^{ - 1}  } .
\end{equation}

\end{theorem}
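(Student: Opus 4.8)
The plan is to obtain both identities as specializations of the polynomial identity~\eqref{eq.p9vcynz} at particular numerical values of its variable $x$; no new machinery is needed beyond that identity, which already holds for every non-negative integer $n$, real $v$, and complex $x$.

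First, for~\eqref{eq.ilndvr6} I would set $x=2$ in~\eqref{eq.p9vcynz}. Then $1-x=-1$, so on the left the factor $(1-x)^k x^{n-k}$ collapses to $(-1)^k 2^{n-k}$; combining this with the $2^{-k}$ already present converts the left-hand summand into $(-1)^k\binom{n}{k}2^{n-2k}\binom{2k+v}{(2k+v)/2}\binom{k+v}{v/2}^{-1}$, which is exactly the left side of~\eqref{eq.ilndvr6}. On the right, $(1-x)^{2k}=1$, so the right-hand side reduces verbatim to $\sum_{k}\binom{n}{2k}2^{-2k}\binom{2k}{k}\binom{(2k+v)/2}{k}^{-1}$, matching the right side of~\eqref{eq.ilndvr6}.

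Next, for~\eqref{eq.alpigwp} I would set $x=\tfrac12$ in~\eqref{eq.p9vcynz}. Then $1-x=\tfrac12$ as well, so on the left $(1-x)^k x^{n-k}=2^{-n}$ uniformly in $k$, whence the entire left-hand side becomes $2^{-n}\sum_k\binom{n}{k}2^{-k}\binom{2k+v}{(2k+v)/2}\binom{k+v}{v/2}^{-1}$. On the right, $(1-x)^{2k}=2^{-2k}$, producing $\sum_k\binom{n}{2k}2^{-4k}\binom{2k}{k}\binom{(2k+v)/2}{k}^{-1}$. Multiplying both sides by $2^n$ clears the overall factor and, since $2^{n}\cdot 2^{-4k}=2^{n-4k}$, yields precisely~\eqref{eq.alpigwp}.

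Because each step is a direct substitution followed by elementary exponent arithmetic, I anticipate no real obstacle. The only point deserving a moment's care is confirming that the generalized binomial coefficients on both sides remain well defined for the chosen real $v$ (that is, that their arguments avoid the negative integers); but this is inherited directly from the standing hypotheses under which~\eqref{eq.p9vcynz} was established, so it requires no additional argument.
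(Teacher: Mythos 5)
Your proof is correct, but it runs through a different source identity than the paper does. The paper obtains both formulas by specializing~\eqref{eq.qd43spp} (the polynomial identity built from~\eqref{eq.ltr1okl}) at $x=-1$ and $x=2$, whereas you specialize~\eqref{eq.p9vcynz} (the one built from~\eqref{eq.n47svms}) at $x=2$ and $x=\tfrac12$. Both routes are one-line substitutions followed by exponent bookkeeping, and both are fully justified by identities already proved in the paper; they are parallel but not identical, since~\eqref{eq.qd43spp} and~\eqref{eq.p9vcynz} are genuinely different polynomial identities. A small point in favor of your route: evaluating~\eqref{eq.p9vcynz} at $x=2$ gives~\eqref{eq.ilndvr6} immediately, because the sign $(-1)^k$ arises directly from $(1-x)^k$; the paper's evaluation of~\eqref{eq.qd43spp} at $x=-1$ instead produces $(-1)^{n-k}$ on the left and $(-1)^{n}$ on the right, so one must still cancel the common factor $(-1)^n$ to reach the stated form. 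Conversely, the paper's choice keeps both results as evaluations of the same identity that also reproduces~\eqref{eq.hdj69wz} and~\eqref{eq.y6pnymc} at $x=0$ and $x=1$, which is tidier expositionally. Your closing remark about well-definedness of the generalized binomial coefficients is appropriately handled: the hypotheses are inherited from~\eqref{eq.p9vcynz}, exactly as you say.
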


\begin{proof}
Evaluate~\eqref{eq.qd43spp} at $x=-1$ and $x=2$, respectively. 
\end{proof}

\begin{remark}
Setting $x=0$ in~\eqref{eq.qd43spp} reproduces identity~\eqref{eq.hdj69wz}  while setting $x=1$ reproduces~\eqref{eq.y6pnymc}. 
\end{remark}

\begin{proposition}\label{prop.nkjtw6g}
If $n$ is a non-negative integer, then
\begin{gather}
\sum_{k = 0}^n {( - 1)^k \binom{{n}}{k}2^{n - 2k} \binom{{2k}}{k}}  = \sum_{k = 0}^{\left\lfloor {n/2} \right\rfloor } {\binom{{n}}{{2k}}2^{ - 2k} \binom{{2k}}{k}} ,\\
\sum_{k = 0}^{\left\lfloor {n/2} \right\rfloor } {\binom{{n}}{{2k}}\frac{1}{{2k + 1}}}  = \frac{{2^{n - 1} }}{{2^n  - 1}}\sum_{k = 1}^{\left\lceil {n/2} \right\rceil } {\binom{{n}}{{2k - 1}}\frac{1}{k}},\quad n\ne 0,\\ 
\sum_{k = 0}^n {( - 1)^k \binom{{n}}{k}\frac{{2k + 1}}{{k + 2}}2^{n - 2k + 1} C_k }  = \sum_{k = 0}^{\left\lfloor {n/2} \right\rfloor } {\binom{{n}}{{2k}}2^{ - 2k} C_k } .
\end{gather}

\end{proposition}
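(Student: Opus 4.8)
The plan is to obtain all three identities as the special cases $v=0$, $v=1$, and $v=2$ of the single master identity~\eqref{eq.ilndvr6}, following the same specialization strategy already used throughout the paper. Each choice of $v$ reduces the generalized binomial coefficients to ordinary (or Catalan) ones, and the three displayed equations should fall out after routine simplification.

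First, for the opening identity I would set $v=0$ in~\eqref{eq.ilndvr6}. Then $\binom{k+v}{v/2}=\binom{k}{0}=1$ and $\binom{(2k+v)/2}{k}=\binom{k}{k}=1$, so both inverse factors disappear while $\binom{2k+v}{(2k+v)/2}=\binom{2k}{k}$; the identity collapses immediately to the first equation, with no further work. For the third identity I would set $v=2$. On the left, $\binom{k+2}{1}^{-1}=1/(k+2)$ and $\binom{2k+2}{k+1}=(k+2)\,C_{k+1}$, so the two factors combine to $C_{k+1}$; applying the Catalan recurrence $C_{k+1}=\frac{2(2k+1)}{k+2}\,C_k$ turns the left side into $\sum_k(-1)^k\binom{n}{k}\frac{2k+1}{k+2}\,2^{n-2k+1}C_k$. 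On the right, $\binom{k+1}{k}^{-1}=1/(k+1)$, so $\binom{2k}{k}/(k+1)=C_k$ and the right side becomes $\sum_k\binom{n}{2k}2^{-2k}C_k$, matching the claim.

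The delicate case, and the step I expect to be the main obstacle, is the middle identity, which comes from $v=1$. Here the coefficients $\binom{2k+1}{(2k+1)/2}$ and $\binom{k+1}{1/2}$ are genuinely half-integer, so I would first reduce them using the Gamma-function relations~\eqref{eq.muz1im8}--\eqref{eq.mv0q30s}; the product $\binom{2k+1}{(2k+1)/2}\binom{k+1}{1/2}^{-1}$ simplifies to $2^{2k}/(k+1)$, so that the entire left side of~\eqref{eq.ilndvr6} collapses to $2^n\sum_{k=0}^n(-1)^k\binom{n}{k}/(k+1)$. The key move is then \emph{not} to evaluate this alternating sum in closed form, but to split it according to the parity of $k$: the even-$k$ terms reproduce $A:=\sum_k\binom{n}{2k}\frac{1}{2k+1}$, while the odd-$k$ terms (writing $k=2j-1$, so $1/(k+1)=1/(2j)$) reproduce $\tfrac12 B$, where $B:=\sum_k\binom{n}{2k-1}\frac{1}{k}$.

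Since the right side of~\eqref{eq.ilndvr6} at $v=1$ evaluates, via $\binom{(2k+1)/2}{k}^{-1}=2^{2k}/\big((2k+1)\binom{2k}{k}\big)$, to exactly $A$, equating the two sides gives $2^n\big(A-\tfrac12 B\big)=A$, that is $(2^n-1)A=2^{n-1}B$, which rearranges to the stated identity whenever $n\neq 0$ (so that $2^n-1\neq 0$). I anticipate that the only real subtleties are the bookkeeping in the half-integer reductions at $v=1$ and the recognition that a single parity split of $\sum_k(-1)^k\binom{n}{k}/(k+1)$ produces both $A$ and $B$ at once; the $v=0$ and $v=2$ cases should be essentially immediate.
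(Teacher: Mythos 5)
Your proof is correct and is essentially the paper's own argument with the order of specialization reversed: the paper sets $x=-1$ in~\eqref{eq.l5xib79}, \eqref{eq.hmx1w7h} and~\eqref{eq.bv1inky} (which are~\eqref{eq.qd43spp} at $v=0,1,2$), while you set $v=0,1,2$ in~\eqref{eq.ilndvr6} (which is~\eqref{eq.qd43spp} at $x=-1$), and these two specializations commute. The only substantive step --- splitting $\sum_{k=0}^n(-1)^k\binom{n}{k}\frac{1}{k+1}$ by parity into $\sum_k\binom{n}{2k}\frac{1}{2k+1}-\frac{1}{2}\sum_k\binom{n}{2k-1}\frac{1}{k}$ to obtain the middle identity, together with the half-integer binomial reductions at $v=1$ --- is needed on either route; you spell it out correctly where the paper's one-line proof leaves it implicit.
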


\begin{proof}
Set $x=-1$ in identities~\eqref{eq.l5xib79},~\eqref{eq.hmx1w7h} and~\eqref{eq.bv1inky}.
\end{proof}

\begin{proposition}
If $n$ is a non-negative integer, then
\begin{gather}
\sum_{k = 0}^{\left\lfloor {n/2} \right\rfloor } {\binom{{n}}{{2k}}2^{ - 2k} \binom{{2k}}{k} }=2^{-n}\binom{2n}n,\nonumber\\
\sum_{k = 0}^{\left\lfloor {n/2} \right\rfloor } {\frac{{\binom{{n}}{{2k}}}}{{2k + 1}}}  = \frac{{2^n }}{{n + 1}},\\
\sum_{k = 0}^{\left\lfloor {n/2} \right\rfloor } {\frac{{\binom{{n}}{{2k}}}}{{k + 1}}2^{ - 2k} \binom{{2k}}{k}}  = \frac{{2^{ - n + 1} }}{{n + 2}}\left( {2n + 1} \right)C_n .
\end{gather}

\end{proposition}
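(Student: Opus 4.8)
The plan is to obtain all three identities as the $x=1$ specializations of the polynomial identities \eqref{eq.l5xib79}, \eqref{eq.hmx1w7h}, and \eqref{eq.bv1inky}, respectively, exactly mirroring how the preceding proposition arose from setting $x=-1$ in the very same three identities. The key observation is that each of those polynomial identities has a left-hand side of the shape $\sum_{k=0}^n(\cdots)\left(1-x\right)^{n-k}$, so putting $x=1$ annihilates every factor $\left(1-x\right)^{n-k}$ except the one indexed by $k=n$, for which $\left(1-x\right)^{0}$ is the constant polynomial $1$. Consequently each left-hand side collapses to its single $k=n$ term, while each right-hand side becomes precisely the sum appearing in the corresponding claimed identity, since every factor $x^{n-2k}$ equals $1$ at $x=1$.

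First I would dispose of the first two identities, which require no further manipulation. Setting $x=1$ in \eqref{eq.l5xib79}, the surviving $k=n$ term on the left is $\binom{n}{n}2^{-n}\binom{2n}{n}=2^{-n}\binom{2n}{n}$, which yields the first identity at once (this is just \eqref{eq.ilslov7}). Setting $x=1$ in \eqref{eq.hmx1w7h}, the surviving $k=n$ term is $\binom{n}{n}\tfrac{2^n}{n+1}=\tfrac{2^n}{n+1}$, giving the second identity.

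For the third identity I would set $x=1$ in \eqref{eq.bv1inky}; the surviving $k=n$ term on the left is $\tfrac{1}{n+2}\,2^{-n}\binom{2(n+1)}{n+1}$, and the right-hand side is exactly the claimed sum. The only remaining task is to recast this value in Catalan form. Using $\binom{2n+2}{n+1}=\tfrac{2(2n+1)}{n+1}\binom{2n}{n}$ together with $C_n=\tfrac{1}{n+1}\binom{2n}{n}$, one finds $\tfrac{1}{n+2}\,2^{-n}\binom{2n+2}{n+1}=\tfrac{2^{-n+1}(2n+1)}{n+2}\,C_n$, which is the asserted right-hand side. Since each step is a direct evaluation, there is no genuine obstacle; the only point demanding a moment's care is this elementary binomial-to-Catalan rewriting, and even that reduces to the single ratio $\binom{2n+2}{n+1}/\binom{2n}{n}=\tfrac{(2n+2)(2n+1)}{(n+1)^2}=\tfrac{2(2n+1)}{n+1}$.
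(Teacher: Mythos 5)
Your proposal is correct and is exactly the paper's own proof: the paper simply sets $x=1$ in identities \eqref{eq.l5xib79}, \eqref{eq.hmx1w7h} and \eqref{eq.bv1inky}, and your evaluation of the surviving $k=n$ terms, together with the rewriting $\binom{2n+2}{n+1}=2(2n+1)C_n$, supplies the routine details the paper leaves implicit.
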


\begin{proof}
Set $x=1$ in identities~\eqref{eq.l5xib79},~\eqref{eq.hmx1w7h} and~\eqref{eq.bv1inky}.
\end{proof}

\begin{proposition}\label{prop.n12zl7g}
If $n$ is a non-negative integer, then
\begin{gather}
\sum_{k = 0}^n {\binom{{n}}{k}2^{ - k} \binom{{2k}}{k}}  = \sum_{k = 0}^{\left\lfloor {n/2} \right\rfloor } {\binom{{n}}{{2k}}2^{n - 4k} \binom{{2k}}{k}},\\
\sum_{k = 0}^{\left\lfloor {n/2} \right\rfloor } {\binom{{n}}{{2k}}\frac{{2^{n - 2k + 1}  - 2^{2k + 1} }}{{2k + 1}}}  = \sum_{k = 1}^{\left\lceil {n/2} \right\rceil } {\binom{{n}}{{2k - 1}}\frac{{2^{2k - 1} }}{k}},\\
\sum_{k = 0}^n {\binom{{n}}{k}\frac{{2k + 1}}{{k + 2}}\,2^{-k + 1} C_k }  = \sum_{k = 0}^{\left\lfloor {n/2} \right\rfloor } {\binom{{n}}{{2k}}2^{n - 4k} C_k } .
\end{gather}
\end{proposition}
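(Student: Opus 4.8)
The plan is to specialize the three polynomial identities~\eqref{eq.yvskdge},~\eqref{eq.ni2yglt} and~\eqref{eq.fe8atkt} at the symmetric point $x=1/2$, in exact parallel with the way Proposition~\ref{prop.nkjtw6g} and the proposition preceding it were obtained from~\eqref{eq.l5xib79},~\eqref{eq.hmx1w7h} and~\eqref{eq.bv1inky}. The value $x=1/2$ is natural because it is the unique point at which $1-x=x$, so the variable factors on both sides collapse to pure powers of $2$: on each left-hand side $(1-x)^k x^{n-k}=2^{-n}$, which is independent of $k$, while on each right-hand side $(1-x)^{2k}=2^{-2k}$. After multiplying every resulting equation through by $2^n$, all dependence on $x$ disappears and three combinatorial identities remain.

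For the first identity I would substitute into~\eqref{eq.yvskdge}: the left side becomes $2^{-n}\sum_{k}\binom{n}{k}2^{-k}\binom{2k}{k}$ and the right side becomes $\sum_{k}\binom{n}{2k}2^{-4k}\binom{2k}{k}$, so clearing the factor $2^{-n}$ gives the stated first identity at once. The third identity arises the same way from~\eqref{eq.fe8atkt}, producing $\sum_k\binom{n}{k}2^{-k}C_{k+1}=\sum_k\binom{n}{2k}2^{n-4k}C_k$; I would then rewrite the summand on the left using the Catalan recurrence $C_{k+1}=\frac{2(2k+1)}{k+2}\,C_k$, an immediate consequence of $\binom{2k+2}{k+1}=\frac{2(2k+1)}{k+1}\binom{2k}{k}$, which turns $2^{-k}C_{k+1}$ into $\frac{2k+1}{k+2}2^{-k+1}C_k$ and yields the claimed form.

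The second identity requires one extra manipulation and is the only mildly delicate point. Substituting $x=1/2$ into~\eqref{eq.ni2yglt} and multiplying by $2^n$ gives the auxiliary relation $\sum_{k=0}^n\binom{n}{k}\frac{2^k}{k+1}=\sum_{k=0}^{\lfloor n/2\rfloor}\binom{n}{2k}\frac{2^{n-2k}}{2k+1}$. I would then split the full sum on the left according to the parity of the index: its even part is $\sum_k\binom{n}{2k}\frac{2^{2k}}{2k+1}$, and its odd part, after writing the index as $2k-1$ (with $k$ running up to $\lceil n/2\rceil$), is $\sum_k\binom{n}{2k-1}\frac{2^{2k-1}}{2k}$. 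Since the auxiliary relation says the even-plus-odd total equals $\sum_k\binom{n}{2k}\frac{2^{n-2k}}{2k+1}$, subtracting the even part isolates the odd part; the target left-hand side $\sum_k\binom{n}{2k}\frac{2^{n-2k+1}-2^{2k+1}}{2k+1}$ is precisely twice that difference, and the factor $2$ converts $\frac{2^{2k-1}}{2k}$ into $\frac{2^{2k-1}}{k}$, reproducing the stated right-hand side. The main obstacle is therefore purely bookkeeping—keeping the even/odd reindexing and the factor of two straight—since the substitution $x=1/2$ does the real work in all three cases.
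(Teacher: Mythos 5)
Your proof is correct, but it routes through a different trio of the paper's polynomial identities: the paper obtains this proposition by setting $x=2$ in the alternating identities \eqref{eq.l5xib79}, \eqref{eq.hmx1w7h} and \eqref{eq.bv1inky}, whereas you set $x=1/2$ in the symmetric identities \eqref{eq.yvskdge}, \eqref{eq.ni2yglt} and \eqref{eq.fe8atkt}. The two routes are in fact equivalent: at $x=2$ the factors $(1-x)^{n-k}$ become $(-1)^{n-k}$ and cancel the alternating signs, while at $x=1/2$ your factors collapse to $2^{-n}$ on the left and $2^{-2k}$ on the right; after multiplying through by $2^n$ the resulting equations coincide line for line (both are specializations, at $v=0,1,2$, of the single identity \eqref{eq.alpigwp}, which the paper itself derives from \eqref{eq.qd43spp} at $x=2$). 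What your write-up adds --- and what the paper's one-line proof leaves entirely implicit --- is the necessary post-processing: the parity split of $\sum_{k}\binom{n}{k}\frac{2^k}{k+1}$ required to reach the stated second identity, and the Catalan recurrence $C_{k+1}=\frac{2(2k+1)}{k+2}\,C_k$ required for the third. Both of these manipulations are carried out correctly; in particular your factor-of-two bookkeeping checks out, since twice the odd part $\sum_{j}\binom{n}{2j-1}\frac{2^{2j-1}}{2j}$ is indeed $\sum_{j}\binom{n}{2j-1}\frac{2^{2j-1}}{j}$. So your argument is a valid, and somewhat more self-contained, rendering of essentially the same specialization idea the paper uses.
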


\begin{proof}
Set $x=2$ in identities~\eqref{eq.l5xib79},~\eqref{eq.hmx1w7h} and~\eqref{eq.bv1inky}.

\end{proof}

\begin{theorem}
if $n$ is a non-negative integer and $v$ is a real number, then
\begin{equation}\label{eq.yveoyay}
\sum_{k = 0}^n {(-1)^{n - k}\binom{{n}}{k}\binom{{2k + v}}{{\left( {2k + v} \right)/2}}\binom{{k + v}}{{v/2}}^{ - 1} }=\sum_{k = 0}^{\left\lfloor {n/2} \right\rfloor } {\binom{{n}}{{2k}}\binom{{2k}}{k}\binom{{\left( {2k + v} \right)/2}}{k}^{ - 1} } .
\end{equation}

\end{theorem}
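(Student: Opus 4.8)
The plan is to read off~\eqref{eq.yveoyay} as a one-line specialization of the corollary~\eqref{eq.ly7bawk}, in the same spirit as the polynomial identities of Section~\ref{polynomials}. The only thing to supply is a polynomial identity of the shape~\eqref{eq.a1lk6eb} whose coefficient sequences $f$ and $g$ are carried by~\eqref{eq.ly7bawk} onto the two sides of~\eqref{eq.yveoyay}.

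First I would take the binomial expansion of $(1+2t)^n$ in its two guises,
\[
\sum_{k=0}^n (-1)^{n-k}\binom{n}{k}2^k(1+t)^k=(1+2t)^n=\sum_{k=0}^n\binom{n}{k}2^k t^k,
\]
where the left equality is just $\sum_k\binom{n}{k}(-1)^{n-k}\bigl(2(1+t)\bigr)^k=\bigl(2(1+t)-1\bigr)^n$. This has the form~\eqref{eq.a1lk6eb} with $f(k)=(-1)^{n-k}\binom{n}{k}2^k$, $g(k)=\binom{n}{k}2^k$, and $s=m=0$, $r=n$.

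Substituting these data into~\eqref{eq.ly7bawk} does all the work. On the left the weight $2^{-k}$ cancels the $2^k$ in $f(k)$, producing exactly $\sum_{k=0}^n(-1)^{n-k}\binom{n}{k}\binom{2k+v}{(2k+v)/2}\binom{k+v}{v/2}^{-1}$. On the right the index runs from $\lfloor(m+1)/2\rfloor=0$ to $\lfloor r/2\rfloor=\lfloor n/2\rfloor$, and since $g(2k)/2^{2k}=\binom{n}{2k}$ the sum collapses to $\sum_{k=0}^{\lfloor n/2\rfloor}\binom{n}{2k}\binom{2k}{k}\binom{(2k+v)/2}{v/2}^{-1}$.

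The single point deserving comment --- and the nearest thing to an obstacle, though it is entirely routine --- is that the output carries $\binom{(2k+v)/2}{v/2}^{-1}$ whereas~\eqref{eq.yveoyay} displays $\binom{(2k+v)/2}{k}^{-1}$; these agree by the reflection symmetry of the generalized binomial coefficient~\eqref{eq.o0ohqay}, since $(2k+v)/2-v/2=k$. As a consistency check, at $n=1$ the claim reduces to $\binom{2+v}{(2+v)/2}\binom{1+v}{v/2}^{-1}=2$, which~\eqref{eq.o0ohqay} confirms. If one prefers a self-contained derivation, the same identity drops out by multiplying $\sum_k(-1)^{n-k}\binom{n}{k}\bigl(4\cos^2(x/2)\bigr)^k=(1+2\cos x)^n$ through by $\cos^v(x/2)\sin^v(x/2)$ and integrating over $[0,\pi]$, the left side via $I(2k+v,v)$ in~\eqref{int1} and the right side via $J(2k,v)$ in~\eqref{int2}.
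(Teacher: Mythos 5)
Your proof is correct and is essentially the paper's own argument: the paper applies \eqref{eq.ly7bawk} to the two-variable binomial identity \eqref{eq.n47svms} to obtain the polynomial identity \eqref{eq.p9vcynz} and then sets $x=-1$, whereas you set $x=-1$ first (your $(1+2t)^n$ identity is exactly \eqref{eq.n47svms} at $x=-1$) and then apply \eqref{eq.ly7bawk} --- the specialization and the integral transform commute, so the content is identical. Your explicit justification of the reflection symmetry $\binom{(2k+v)/2}{v/2}=\binom{(2k+v)/2}{k}$ is also correct (the paper uses this step silently).
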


\begin{proof}
Set $x=-1$ in~\eqref{eq.p9vcynz}.
\end{proof}

\begin{proposition}\label{prop.vq5bsch}
If $n$ is a non-negative integer, then
\begin{gather}
\sum_{k = 0}^n {( - 1)^{n - k} \binom{{n}}{k}\binom{{2k}}{k}}  = \sum_{k = 0}^{\left\lfloor {n/2} \right\rfloor } {\binom{{n}}{k}\binom{{n - k}}{k}}, \\
\sum_{k = 0}^n {( - 1)^{n - k} \binom{{n}}{k}\frac{{2^{2k} }}{{k + 1}}}  = \sum_{k = 0}^{\left\lfloor {n/2} \right\rfloor } {\binom{{n}}{{2k}}\frac{{2^{2k} }}{{2k + 1}}} ,\\
\sum_{k = 0}^n {( - 1)^{n - k} \binom{{n}}{k}\frac{{2\left( {2k + 1} \right)}}{{k + 2}}C_k }  = \sum_{k = 0}^{\left\lfloor {n/2} \right\rfloor } {\binom{{n}}{{2k}}C_k } .
\end{gather}
\end{proposition}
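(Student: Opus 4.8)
The plan is to recognize all three claimed identities as the single specialization $x=-1$ of the polynomial identities already established in the corollary to~\eqref{eq.p9vcynz}, namely~\eqref{eq.yvskdge},~\eqref{eq.ni2yglt} and~\eqref{eq.fe8atkt}. These three are the $v=0,1,2$ cases of~\eqref{eq.p9vcynz}, and the operation of setting $x=-1$ in them is exactly the operation that produced the general-$v$ identity~\eqref{eq.yveoyay} in the preceding theorem; so the present proposition is simply the $v=0,1,2$ reading of that evaluation, and one could equivalently derive it by specializing~\eqref{eq.yveoyay} (at the cost of simplifying half-integer binomial coefficients via~\eqref{eq.muz1im8}--\eqref{eq.mv0q30s}). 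I would prefer the direct route through the already-simplified corollary identities.

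First I would substitute $x=-1$ throughout. With this choice $1-x=2$, so that $(1-x)^k=2^k$, $x^{n-k}=(-1)^{n-k}$ and $(1-x)^{2k}=2^{2k}$. Carrying this through, each left-hand side acquires the factor $2^{-k}(1-x)^k=1$ (or $2^{k}(1-x)^k=2^{2k}$ in the middle case), leaving the alternating sign $(-1)^{n-k}$, while on each right-hand side the factor $2^{-2k}(1-x)^{2k}=1$ collapses likewise. The three left-hand sides then read $\sum_k(-1)^{n-k}\binom nk\binom{2k}k$, $\sum_k(-1)^{n-k}\binom nk\,2^{2k}/(k+1)$ and $\sum_k(-1)^{n-k}\binom nk C_{k+1}$, and the three right-hand sides read $\sum_k\binom n{2k}\binom{2k}k$, $\sum_k\binom n{2k}\,2^{2k}/(2k+1)$ and $\sum_k\binom n{2k}C_k$, respectively.

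What remains is to reconcile these with the stated forms. The second identity already matches verbatim, so nothing further is needed there. For the first identity I would invoke the elementary subset-of-a-subset relation $\binom n{2k}\binom{2k}k=\binom nk\binom{n-k}k$ (both sides equal $n!/(k!\,k!\,(n-2k)!)$) to rewrite its right-hand side in the claimed form. For the third identity I would use the Catalan recurrence $C_{k+1}=\frac{2(2k+1)}{k+2}\,C_k$, immediate from the definition $C_j=\frac1{j+1}\binom{2j}j$, to convert the summand $C_{k+1}$ on the left into $\frac{2(2k+1)}{k+2}C_k$.

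I do not expect any genuine obstacle: the argument is one substitution followed by two one-line binomial simplifications. The only point requiring a little care is the bookkeeping of the summation ranges — the upper limit $n$ on the alternating sums against $\lfloor n/2\rfloor$ on the right-hand sides — but these limits are inherited unchanged from~\eqref{eq.yvskdge}--\eqref{eq.fe8atkt} and need no separate verification.
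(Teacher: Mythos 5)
Your proposal is correct and is essentially the paper's own proof: the paper likewise sets $x=-1$ in identities~\eqref{eq.yvskdge}--\eqref{eq.fe8atkt} (noting, as you do, that this is the same as taking $v=0,1,2$ in~\eqref{eq.yveoyay}). Your added verifications --- the identity $\binom{n}{2k}\binom{2k}{k}=\binom{n}{k}\binom{n-k}{k}$ and the recurrence $C_{k+1}=\frac{2(2k+1)}{k+2}\,C_k$ --- are exactly the simplifications the paper leaves implicit, and both are correct.
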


\begin{proof}
Set $x=-1$ in each of identities~\eqref{eq.yvskdge}--~\eqref{eq.fe8atkt} or what is the same thing, $v=0$, $v=1$ and $v=2$ in~\eqref{eq.yveoyay}.

\end{proof}

\begin{theorem}
If $n$ is a non-negative integer and $v$ is a real number, then
\begin{equation}\label{eq.ei75ly5}
\begin{split}
&\sum_{k = 0}^n {\binom{{2n}}{{2k}}\binom{{2\left( {n - k} \right)}}{{n - k}}\binom{{2k + v}}{{\left( {2k + v} \right)/2}}\binom{{\left( {2n + v} \right)/2}}{{n - k}}} ^{ - 1}\\&\qquad  = \sum_{k = 0}^{\left\lfloor {n/2} \right\rfloor } {\binom{{n}}{{2k}}2^{2n - 2k} \binom{{2k}}{k}\binom{{2k + v}}{{\left( {2k + v} \right)/2}}\binom{{\left( {4k + v} \right)/2}}{k}^{ - 1} } 
\end{split}
\end{equation}
and
\begin{equation}\label{eq.lwx1yzw}
\begin{split}
&\sum_{k = 1}^n {\binom{{2n}}{{2k - 1}}\binom{{2\left( {n - k + 1} \right)}}{{n - k + 1}}\binom{{2k - 1 + v}}{{\left( {2k - 1 + v} \right)/2}}\binom{{\left( {2n + v + 1} \right)/2}}{{n - k + 1}}} ^{ - 1}\\ &\qquad  =\sum_{k = 1}^{\left\lceil {n/2} \right\rceil } {\binom{{n}}{{2k - 1}}2^{2n + 1 - 2k} \binom{{2k}}{k}\binom{{2k - 1 + v}}{{\left( {2k - 1 + v} \right)/2}}\binom{{\left( {4k + v - 1} \right)/2}}{k}^{ - 1} }.
\end{split}
\end{equation}

\end{theorem}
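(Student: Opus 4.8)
The plan is to derive both identities directly from the binomial theorem applied to $\cos x\pm\sin x$, using the elementary relations $(\cos x\pm\sin x)^{2}=1\pm\sin 2x$, and then to integrate against $\sin^{v}x$ over $[0,\pi/2]$ by means of the Beta integral $K(u,v)$ of~\eqref{beta} (Lemma~\ref{lem.ibicayr}). The whole point is that $(\cos x+\sin x)^{2n}$ admits two binomial expansions: directly,
\[
(\cos x+\sin x)^{2n}=\sum_{j=0}^{2n}\binom{2n}{j}\cos^{2n-j}x\,\sin^{j}x,
\]
and, since $(\cos x+\sin x)^{2}=1+\sin 2x$, also as $(1+\sin 2x)^{n}=\sum_{k=0}^{n}\binom nk\sin^{k}(2x)$; likewise $(\cos x-\sin x)^{2n}=(1-\sin 2x)^{n}$.

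Next I would form the even and odd parts. Adding the two expansions of $(\cos x+\sin x)^{2n}$ and $(\cos x-\sin x)^{2n}$ and halving annihilates the odd powers and gives
\[
\sum_{k=0}^{n}\binom{2n}{2k}\cos^{2(n-k)}x\,\sin^{2k}x=\sum_{k=0}^{\lfloor n/2\rfloor}\binom{n}{2k}\sin^{2k}(2x),
\]
whereas subtracting and halving gives
\[
\sum_{k=1}^{n}\binom{2n}{2k-1}\cos^{2(n-k)+1}x\,\sin^{2k-1}x=\sum_{k=1}^{\lceil n/2\rceil}\binom{n}{2k-1}\sin^{2k-1}(2x).
\]
The index ranges on the right are precisely the even- and odd-index subsets of $\{0,\dots,n\}$, which accounts for the limits $\lfloor n/2\rfloor$ and $\lceil n/2\rceil$ in the theorem.

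For the even identity~\eqref{eq.ei75ly5} I would substitute $\sin^{2k}(2x)=2^{2k}\cos^{2k}x\,\sin^{2k}x$ on the right, multiply both trigonometric polynomials by $\sin^{v}x$, and integrate termwise over $[0,\pi/2]$. By~\eqref{beta} the $k$-th term on the left is $\binom{2n}{2k}K(2(n-k),2k+v)$ and the $k$-th term on the right is $\binom{n}{2k}2^{2k}K(2k,2k+v)$; substituting the closed form of $K$ and cancelling the common factor $2^{2n+v+1}/\pi$ reproduces the two sides of~\eqref{eq.ei75ly5} verbatim.

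The odd identity~\eqref{eq.lwx1yzw} is the delicate one and is where I expect the only real obstacle. Integrating the odd-part identity as it stands would produce the half-integer central binomials $\binom{2(n-k)+1}{(2(n-k)+1)/2}$ rather than the integer ones $\binom{2(n-k+1)}{n-k+1}$ recorded in~\eqref{eq.lwx1yzw}. To land on the stated form I would first multiply the odd-part identity through by $\cos x$, turning every cosine exponent even; with $\sin^{2k-1}(2x)=2^{2k-1}\cos^{2k-1}x\,\sin^{2k-1}x$ the right-hand side becomes $\sum_{k}\binom{n}{2k-1}2^{2k-1}\cos^{2k}x\,\sin^{2k-1}x$. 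Multiplying by $\sin^{v}x$ and integrating over $[0,\pi/2]$ then yields, on the left, terms $\binom{2n}{2k-1}K(2(n-k+1),2k-1+v)$ and, on the right, terms $\binom{n}{2k-1}2^{2k-1}K(2k,2k-1+v)$; inserting the closed form of $K$ and cancelling the common factor gives~\eqref{eq.lwx1yzw}. The thing to monitor is the bookkeeping of powers of $2$: the extra factor $\cos x$ is exactly what shifts the cosine exponent from odd to even (producing genuine central binomials) and simultaneously reconciles the two prefactors, so that the stated power $2^{2n+1-2k}$ on the right of~\eqref{eq.lwx1yzw} emerges with no leftover constant.
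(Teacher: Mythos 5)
Your proof is correct, and its skeleton matches the paper's: both start from $(\cos x+\sin x)^{2n}=(1+\sin 2x)^n$, expand by the binomial theorem, multiply by $\sin^v x$, evaluate via the Beta integrals of Lemma~\ref{lem.ibicayr}, and insert an extra factor $\cos x$ to obtain the odd identity~\eqref{eq.lwx1yzw}. Where you genuinely differ is the even/odd separation mechanism. The paper keeps the full expansion $\sum_{k=0}^{2n}\binom{2n}{k}\cos^{2n-k}x\sin^{k}x=\sum_{k=0}^{n}\binom{n}{k}2^{k}\cos^{k}x\sin^{k}x$ as a single identity and integrates over $[0,\pi]$, letting the vanishing of $J(m,v)$ for odd $m$ in~\eqref{int2} annihilate the unwanted parity class automatically --- the same parity-selection-by-integration trick that powers the paper's short proof of Knuth's old sum. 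You instead separate parities algebraically \emph{before} integrating, by pairing the expansion with its conjugate $(\cos x-\sin x)^{2n}=(1-\sin 2x)^{n}$ and adding/subtracting; this produces two standalone trigonometric polynomial identities which you then integrate over $[0,\pi/2]$ with $K(u,v)$ of~\eqref{beta}, so that no vanishing lemma is needed and every term contributes. Each route has its advantage: the paper's is shorter and recycles its signature device; yours makes the cancellation visible at the algebraic level and only ever uses the quarter-period Beta integral, so the parity-dependence of $J$ never enters. Your power-of-two bookkeeping is right --- the common factors are $\pi\,2^{-2n-v-1}$ in the even case and $\pi\,2^{-2n-v-2}$ in the odd case (you name the reciprocal of the first, a harmless slip), and dividing them out yields~\eqref{eq.ei75ly5} and~\eqref{eq.lwx1yzw} exactly, including the prefactor $2^{2n+1-2k}$.
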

In particular,
\begin{equation}
\sum_{k = 0}^n {\binom{{2n}}{{2k}}\binom{{2\left( {n - k} \right)}}{{n - k}}\binom{{2k}}{k}\binom{{n}}{k}} ^{ - 1}  = \sum_{k = 0}^{\left\lfloor {n/2} \right\rfloor } {\binom{{n}}{{2k}}2^{2n - 2k} \binom{{2k}}{k}}
\end{equation}
and
\begin{equation}
\begin{split}
&\sum_{k = 1}^n {\binom{{2n}}{{2k - 1}}\binom{{2\left( {n - k + 1} \right)}}{{n - k + 1}}\binom{{2k - 1}}{{\left( {2k - 1} \right)/2}}\binom{{\left( {2n + 1} \right)/2}}{{n - k + 1}}} ^{ - 1}\\&\qquad  = \sum_{k = 1}^{\left\lceil {n/2} \right\rceil } {\binom{{n}}{{2k - 1}}2^{2n + 1 - 2k} \binom{{2k}}{k}\binom{{2k - 1}}{{\left( {2k - 1} \right)/2}}\binom{{\left( {4k - 1} \right)/2}}{k}^{ - 1} } .
\end{split}
\end{equation}

\begin{proof}
Since
\begin{equation*}
\left( {\cos \left( {\frac{x}{2}} \right) + \sin \left( {\frac{x}{2}} \right)} \right)^{2m}=\left( {1 + \sin x} \right)^m,
\end{equation*}
the binomial theorem gives
\begin{equation*}
\sum_{k = 0}^{2n} {\binom{{2n}}{k}\cos ^{2n - k} \left( {\frac{x}{2}} \right)\sin ^k \left( {\frac{x}{2}} \right)}  = \sum_{k = 0}^n {\binom{{n}}{k}\sin ^k x},
\end{equation*}
so that
\begin{equation*}
\sum_{k = 0}^{2n} {\binom{{2n}}{k}\cos ^{2n - k} x\sin ^k x}  = \sum_{k = 0}^n {\binom{{n}}{k}2^k \cos ^k x\sin ^k x}
\end{equation*}
and, therefore,
\begin{equation*}
\begin{split}
&\sum_{k = 0}^n {\binom{{2n}}{{2k}}\sin ^{2k + v} x\cos ^{2n - 2k} x}  + \sum_{k = 1}^n {\binom{{2n}}{{2k - 1}}\sin ^{2k - 1 + v} x\cos ^{2n - 2k + 1} x} \\
&\qquad = \sum_{k = 0}^{\left\lfloor {n/2} \right\rfloor } {\binom{{n}}{{2k}}2^{2k} \cos ^{2k} x\sin ^{2k + v} x}  + \sum_{k = 1}^{\left\lceil {n/2} \right\rceil } {\binom{{n}}{{2k - 1}}2^{2k - 1} \cos ^{2k - 1} x\sin ^{2k - 1 + v} x}.
\end{split}
\end{equation*}
Integrating from $0$ to $\pi$ using Lemma~\ref{lem.ibicayr} gives~\eqref{eq.ei75ly5} while multiplying through by $\cos x$ and integrating from $0$ to $\pi$ gives~\eqref{eq.lwx1yzw}.

\end{proof}

\subsection{Identities from Waring's formulas}

Waring's formula and its dual \cite[Equations (22) and (1)]{gould99} are
\begin{equation}\label{eq.h35j76y}
\sum_{k = 0}^{\left\lfloor {n/2} \right\rfloor } {( - 1)^k \frac{n}{{n - k}}\binom {n-k}k(xy)^k (x + y)^{n - 2k} }  = x^{n}  + y^{n}
\end{equation}
and
\begin{equation}\label{eq.amsa61r}
\sum_{k = 0}^{\left\lfloor {n/2} \right\rfloor } {( - 1)^k \binom {n-k}k(xy)^k (x + y)^{n - 2k} }  = \frac{x^{n + 1}  - y^{n + 1}}{x - y}\,.
\end{equation}
Identity \eqref{eq.h35j76y} holds for positive integer $n$ while identity \eqref{eq.amsa61r} holds for any non-negative integer $n$.

\begin{theorem}
If $n$ is a non-negative integer and $v$ is a real number, then
\begin{equation}\label{eq.rvlh5im}
\begin{split}
&\sum_{k = 0}^{\left\lfloor {n/2} \right\rfloor } {( - 1)^k \frac{n}{{n - k}}\binom{{n - k}}{k}2^{ - 4k} \binom{{2k + v}}{{\left( {2k + v} \right)/2}}}\\ &\qquad = \binom{{2n + v}}{{\left( {2n + v} \right)/2}}\binom{{v}}{{v/2}}2^{1 - 2n} \binom{{n + v}}{{v/2}}^{ - 1} .
\end{split}
\end{equation}
\end{theorem}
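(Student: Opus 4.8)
The plan is to specialize Waring's formula~\eqref{eq.h35j76y} to trigonometric arguments and then integrate, in exactly the spirit of the proofs of~\eqref{eq.ei75ly5} and~\eqref{eq.lwx1yzw}. Concretely, I would substitute $x=\cos^2(t/2)$ and $y=\sin^2(t/2)$ into~\eqref{eq.h35j76y} (applied with $n\ge 1$; the degenerate $n=0$ case is checked directly). Then $x+y=1$ and $xy=\cos^2(t/2)\sin^2(t/2)=\tfrac14\sin^2 t$, so the summand's factor $(xy)^k(x+y)^{n-2k}$ collapses to $4^{-k}\sin^{2k}t$, while the right-hand side becomes $\cos^{2n}(t/2)+\sin^{2n}(t/2)$. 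This yields the trigonometric identity
\begin{equation*}
\sum_{k=0}^{\lfloor n/2\rfloor}(-1)^k\frac{n}{n-k}\binom{n-k}{k}4^{-k}\sin^{2k}t=\cos^{2n}(t/2)+\sin^{2n}(t/2).
\end{equation*}

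Next I would multiply through by $\sin^v t$ and integrate from $0$ to $\pi$, invoking Lemma~\ref{lem.ibicayr} term by term. On the left, each summand requires $\int_0^\pi\sin^{2k+v}t\,dt$, which is the $m=0$ instance of~\eqref{int2}, namely $2^{-(2k+v)}\pi\binom{2k+v}{(2k+v)/2}$; combining the powers of two via $4^{-k}2^{-2k}=2^{-4k}$ and factoring out the common $\pi\,2^{-v}$ reproduces precisely the left-hand side of~\eqref{eq.rvlh5im}. On the right, I would use $\sin^v t=2^v\sin^v(t/2)\cos^v(t/2)$ to turn the two terms into $2^v\bigl(I(2n+v,v)+I(v,2n+v)\bigr)$, where $I$ is the integral evaluated in~\eqref{int1}.

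The one step requiring care is the evaluation of the right-hand side. Here the symmetry $I(u,v)=I(v,u)$ recorded in Lemma~\ref{lem.ibicayr} shows the two terms coincide, so the right side equals $2^{v+1}I(2n+v,v)$; evaluating $I(2n+v,v)$ by~\eqref{int1} and simplifying with the reflection $\binom{n+v}{(2n+v)/2}=\binom{n+v}{v/2}$ (valid since $(n+v)-(2n+v)/2=v/2$) produces $2^{1-2n-v}\pi\binom{2n+v}{(2n+v)/2}\binom{v}{v/2}\binom{n+v}{v/2}^{-1}$. Dividing both sides by the common factor $\pi\,2^{-v}$ then gives~\eqref{eq.rvlh5im}. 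I expect the bookkeeping of the powers of two and this reflection of the central binomial coefficient to be the only genuinely delicate points; every integration is a direct application of Lemma~\ref{lem.ibicayr}.
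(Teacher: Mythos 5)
Your proposal follows exactly the paper's own proof: substituting $\cos^2(t/2)$ and $\sin^2(t/2)$ into Waring's formula~\eqref{eq.h35j76y}, multiplying by $\sin^v t$, and integrating over $[0,\pi]$ term by term via Lemma~\ref{lem.ibicayr}. Your write-up simply makes explicit the bookkeeping (the $2^{-4k}$ powers, the symmetry $I(u,v)=I(v,u)$, and the reflection $\binom{n+v}{(2n+v)/2}=\binom{n+v}{v/2}$) that the paper leaves to the reader, and it is correct.
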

In particular,
\begin{equation}
\sum_{k = 0}^{\left\lfloor {n/2} \right\rfloor } {( - 1)^k \frac{n}{{n - k}}\binom{{n - k}}{k}2^{ - 4k} \binom{{2k}}{k}}  = 2^{ - 2n + 1} \binom{{2n}}{n}.
\end{equation}

\begin{proof}
Write $\cos^2(x/2)$ for $x$ and $\sin^2(y/2)$ for $y$ in~\eqref{eq.h35j76y} and multiply through by $\sin^vx$ to obtain
\begin{equation*}
\begin{split}
\sum_{k = 0}^{\left\lfloor {n/2} \right\rfloor } {( - 1)^k \frac{n}{{n - k}}\binom{{n - k}}{k}2^{ - 2k} \sin ^{2k + v} x}  &= 2^v \cos ^{2n + v} \left( {\frac{x}{2}} \right)\sin ^v \left( {\frac{x}{2}} \right)\\
&\qquad + 2^v \sin ^{2n + v} \left( {\frac{x}{2}} \right)\cos ^v \left( {\frac{x}{2}} \right),
\end{split}
\end{equation*}
from which upon term-wise integration from $0$ to $\pi$, identity~\eqref{eq.rvlh5im} follows.

\end{proof}

By writing $\cos^2(x/2)$ for $x$ and $-\sin^2(y/2)$ for $y$, the reader is invited to discover a combinatorial identity associated with~\eqref{eq.amsa61r}.

\subsection{Identities from an identity of Simons}

Simons~\cite{simons01} proved an identity that is equivalent to the following:
\begin{equation}\label{eq.oli5mgr}
\sum_{k = 0}^n {( - 1)^{n - k} \binom{{n}}{k}\binom{{n + k}}{k}\left( {1 + t} \right)^k }  = \sum_{k = 0}^n {\binom{{n}}{k}\binom{{n + k}}{k}t^k } .
\end{equation}
On choosing 
\begin{equation}
f(k) = ( - 1)^{n - k} \binom{{n}}{k}\binom{{n + k}}{k},\quad g(k) = \binom{{n}}{k}\binom{{n + k}}{k},
\end{equation}
$s=m=0$ and $r=n$ in~\eqref{eq.a1lk6eb},~\eqref{eq.ly7bawk} gives the result stated in the next proposition.
\begin{proposition}
If $n$ is a non-negative integer and $v$ is a real number, then
\begin{equation}\label{eq.s9xhgba}
\begin{split}
&\sum_{k = 0}^n {( - 1)^{n - k} \binom{{n}}{k}2^{ - k} \binom{{n + k}}{k}\binom{{2k + v}}{{\left( {2k + v} \right)/2}}\binom{{k + v}}{{v/2}}^{ - 1} }\\
&\qquad\qquad  = \sum_{k = 0}^{\left\lfloor {n/2} \right\rfloor } {\binom{{n}}{{2k}}2^{ - 2k} \binom{{n + 2k}}{{2k}}\binom{{2k}}{k}\binom{{\left( {2k + v} \right)/2}}{{v/2}}^{-1}} .
\end{split}
\end{equation}

\end{proposition}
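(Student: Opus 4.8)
The plan is to treat~\eqref{eq.oli5mgr} as a concrete instance of the template polynomial identity~\eqref{eq.a1lk6eb} and then invoke the ready-made corollary~\eqref{eq.ly7bawk}, so that no fresh integration is required. First I would observe that Simons' identity already has precisely the shape of~\eqref{eq.a1lk6eb}: both sides are sums over $k$ from $0$ to $n$ in which the exponent on $(1+t)$ on the left and on $t$ on the right is the bare summation index $k$, that is, $p(k)=q(k)=k$. This is exactly the specialization for which~\eqref{eq.ly7bawk} was recorded as the $p(k)=k=q(k)$ case of the corollary to Theorem~\ref{thm.bgu7pnr}. Accordingly I set
\[
f(k)=(-1)^{n-k}\binom{n}{k}\binom{n+k}{k},\qquad g(k)=\binom{n}{k}\binom{n+k}{k},
\]
together with $s=m=0$ and $r=n$.

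With these data I would simply substitute into~\eqref{eq.ly7bawk}. On the left, the sum runs over $k$ from $s=0$ to $n$, and inserting $f(k)$ reproduces the left-hand side of~\eqref{eq.s9xhgba} verbatim. On the right, I would simplify the index bounds: taking $m=0$ gives lower limit $\lfloor(m+1)/2\rfloor=\lfloor 1/2\rfloor=0$, taking $r=n$ gives upper limit $\lfloor r/2\rfloor=\lfloor n/2\rfloor$, and substituting $g(2k)=\binom{n}{2k}\binom{n+2k}{2k}$ yields exactly the right-hand side of~\eqref{eq.s9xhgba}. Since the hypotheses on $v$ in the corollary already cover an arbitrary real $v$, nothing further is needed.

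There is no genuine analytic difficulty here, because all the real work --- writing $t=\cos x$, multiplying by $\sin^v x$, and integrating against Lemma~\ref{lem.ibicayr} --- is already sealed inside~\eqref{eq.ly7bawk}. The only point that deserves a moment's care is the index bookkeeping on the right-hand side: I must confirm that it is the \emph{even}-index branch~\eqref{eq.yv4kyfa} that applies, which holds because $q(k)=k$ forces $q(2j)=2j$ to be even for every $j$, so that the surviving right-hand terms are precisely the $g(2k)$ over $0\le k\le\lfloor n/2\rfloor$ and no odd-index contributions appear. Once this is checked the two sides match term by term and the proposition follows.
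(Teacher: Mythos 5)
Your proposal is correct and follows exactly the paper's own route: the paper likewise recognizes Simons' identity~\eqref{eq.oli5mgr} as an instance of~\eqref{eq.a1lk6eb} with $f(k)=(-1)^{n-k}\binom{n}{k}\binom{n+k}{k}$, $g(k)=\binom{n}{k}\binom{n+k}{k}$, $s=m=0$, $r=n$, and then reads off~\eqref{eq.s9xhgba} from~\eqref{eq.ly7bawk}. Your additional check that the even-index branch governs the right-hand side (since $q(k)=k$ gives $q(2j)=2j$ even) is a sound piece of bookkeeping that the paper leaves implicit.
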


In particular,
\begin{equation}
\sum_{k = 0}^n {( - 1)^{n - k} \binom{{n}}{k}2^{ - k} \binom{{n + k}}{k}\binom{{2k}}{k}}  = \sum_{k = 0}^{\left\lfloor {n/2} \right\rfloor } {\binom{{n}}{{2k}}2^{ - 2k} \binom{{n + 2k}}{{2k}}\binom{{2k}}{k}} .
\end{equation}

The same set of sequences and parameters, $f(k)$ etc.~that led to~\eqref{eq.s9xhgba}, when used in~\eqref{eq.cnwt5zb} gives the following result.
\begin{proposition}
If $n$ is a non-negative integer and $u$ and $v$ are real numbers, then
\begin{equation}
\begin{split}
&\sum_{k = 0}^n {( - 1)^{n - k} \binom{{n}}{k}2^{ - 2k} \binom{{n + k}}{k}\binom{{v}}{{v/2}}\binom{{2k + u}}{{\left( {2k + u} \right)/2}}\binom{{\left( {2k + u + v} \right)/2}}{{v/2}}^{ - 1} } \\
&\qquad = \sum_{k = 0}^n {( - 1)^k \binom{{n}}{k}2^{ - 2k} \binom{{n + k}}{k}\binom{{u}}{{u/2}}\binom{{2k + v}}{{\left( {2k + v} \right)/2}}\binom{{\left( {2k + u + v} \right)/2}}{{u/2}}^{ - 1} } .
\end{split}
\end{equation}
\end{proposition}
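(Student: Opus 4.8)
The plan is to recognize Simons' identity \eqref{eq.oli5mgr} as a particular instance of the generic polynomial identity \eqref{eq.zig6lng} and then feed it into the machine provided by the theorem containing \eqref{eq.cnwt5zb}, exactly as was done in deriving \eqref{eq.s9xhgba}. Reading off \eqref{eq.oli5mgr}, the exponent of $(1+t)$ on the left and of $t$ on the right are both $k$, so the matching data are $p(k)=k=q(k)$, $s=m=0$, $r=n$, together with the sequences $f(k)=(-1)^{n-k}\binom{n}{k}\binom{n+k}{k}$ and $g(k)=\binom{n}{k}\binom{n+k}{k}$. These are precisely the sequences that produced \eqref{eq.s9xhgba}; the only difference here is the destination formula, namely \eqref{eq.cnwt5zb} in place of \eqref{eq.ly7bawk}.

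First I would substitute this data into the left-hand side of \eqref{eq.cnwt5zb}. Since $p(k)=k$, the factor $2^{-2p(k)}$ becomes $2^{-2k}$ and each generalized binomial coefficient carrying $p(k)$ has it replaced by $k$; threading $f(k)$ through then reproduces verbatim the left-hand summand of the asserted identity. Next I would treat the right-hand side of \eqref{eq.cnwt5zb} with $q(k)=k$, so that $(-1)^{q(k)}=(-1)^k$ and $2^{-2q(k)}=2^{-2k}$; inserting $g(k)$ yields the right-hand summand. Because \eqref{eq.cnwt5zb} is an equality for every admissible choice of $f,g,p,q$, these two substitutions together deliver the stated proposition.

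There is essentially no analytic obstacle: the real work — writing $-\sin^2 t$ for $t$ in \eqref{eq.zig6lng}, multiplying by $\cos^u t\,\sin^v t$, and integrating over $[0,\pi/2]$ by means of Lemma \ref{lem.ibicayr} — is already encapsulated once and for all in the derivation of \eqref{eq.cnwt5zb}. The main thing to watch is therefore purely bookkeeping: keeping the roles of $u$ and $v$ straight between the two sides (they are interchanged, reflecting the symmetric roles of $\cos$ and $\sin$ in the derivation of \eqref{eq.cnwt5zb}) and confirming that $p(k)=q(k)=k$ are genuine non-negative integers over the summation range so that the hypotheses apply. One should also carry along the standing assumptions $\Re u>-1$ and $\Re v>-1$ required by the underlying Beta-type integrals; with those in force the identity follows at once.
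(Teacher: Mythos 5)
Your proof is correct and is exactly the paper's own argument: the paper likewise takes the Simons data $f(k)=(-1)^{n-k}\binom{n}{k}\binom{n+k}{k}$, $g(k)=\binom{n}{k}\binom{n+k}{k}$, $p(k)=q(k)=k$, $s=m=0$, $r=n$ (the same data used for \eqref{eq.s9xhgba}) and substitutes it into \eqref{eq.cnwt5zb}. Your bookkeeping of the interchanged roles of $u$ and $v$ on the two sides is also accurate, so nothing is missing.
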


\bigskip
\hrule
\bigskip





\end{document}